\newtheorem{rem}{Remark}[section]
\newtheorem{defn}{Definition}[section]
\newtheorem{thm}{Theorem}[section]
\newtheorem{lem}{Lemma}[section]
\title{Physics-aware deep learning framework for the limited aperture inverse obstacle scattering problem}
\author{
Yunwen Yin\thanks {School of Mathematics, Southeast University, Nanjing, China.}
\and Liang Yan \thanks {School of Mathematics, Southeast University, Nanjing, China. Email:yanliang@seu.edu.cn. LY's work was supported by the NSF of China (Nos. 92370126, 12171085).}
}
\date{} 
\begin{document}
\maketitle

\begin{abstract}
In this paper, we consider a deep learning approach to the limited aperture inverse obstacle scattering problem. It is well known that traditional deep learning relies solely on data, which may limit its performance for the inverse problem when only indirect observation data and a physical model are available. A fundamental question arises in light of these limitations: is it possible to enable deep learning to work on inverse problems without labeled data and to be aware of what it is learning? This work proposes a deep decomposition method (DDM) for such purposes, which does not require ground truth labels. It accomplishes this by providing physical operators associated with the scattering model to the neural network architecture. Additionally, a deep learning based data completion scheme is implemented in DDM to prevent distorting the solution of the inverse problem for limited aperture data. Furthermore, apart from addressing the ill-posedness imposed by the inverse problem itself, DDM is the first physics-aware machine learning technique that can have interpretability property for the obstacle detection. The convergence result of DDM is theoretically investigated. \textcolor{black}{We also prove that adding small noise to the input limited aperture data can introduce additional regularization terms and effectively improve the smoothness of the learned inverse operator.} Numerical experiments are presented to demonstrate the validity of the proposed DDM even when the incident and observation apertures are extremely limited.
\end{abstract}

\begin{keywords}
deep learning; physics-aware machine learning; inverse scattering problem; limited aperture.
\end{keywords}


\section{Introduction}
Inverse scattering problems are common in many  industrial and engineering applications, such as nondestructive testing \cite{ABF}, radar imaging \cite{B1} and medical imaging \cite{Kuchment}.  The majority of inverse scattering problems are non-linear and  specifically ill-posed,  meaning  that the solution may not exist or may not be unique, and, more importantly, it fails to depend continuously on the data, causing a minor perturbation in the data to cause a massive deviation in the solution. As a result, achieving stable and accurate numerical solutions is extremely difficult. There are numerous numerical methods using full aperture data for such inverse problems, such as the Newton-type iterative method \cite{Kress}, the optimization method \cite{KK1}, the recursive linearization method \cite{Bao_Li_Lin_1,Bao_Lu_Rundell_Xu1}, the linear sampling method \cite{CK,CPP}, the direct sampling method \cite{IJZ,LZ} and the factorization method \cite{KL1,Yangjiaqing1}. However, the incident and observation apertures are typically restricted due to the limitations of practical settings; consequently, inverse scattering problems with limited aperture data result in increased non-linearity and ill-posedness. Several reconstruction techniques have been proposed \cite{AH1,BL1,INS1,R1}, by directly utilizing limited aperture measurements. \textcolor{black}{Unfortunately, almost no classical numerical method can simultaneously ensure inversion accuracy and computational efficiency for limited aperture problems--not even quantitative optimization or iterative methods. To achieve high-quality inversion results, a natural approach is to use reconstruction techniques that first recover data across all incident and observation angles, thereby enabling the use of a full aperture dataset \cite{DLMZ1, Liu_Sun_1, Gao_Zhang1}. Because the inversion process is split into two parts, this approach also increases the computational cost even though the inversion quality is improved. Furthermore, the inverse problems in traditional approaches must inevitably be solved again when new observation data are used, which restricts the potential for real-time reconstructions, particularly for iterative algorithms. Owing to these factors, looking for an alternative method for solving limited aperture inverse scattering problems is imperative.}

Recently, deep neural networks (DNNs) have demonstrated their promising features in a variety of inverse problems \cite{LLTW1,GLWZ1,gao2023adaptive,li2023surrogate,ZHRB1,zhou2020adaptive,YYL1,NHZ1,LZZ1,Ning_Han_Zou2}. In contrast to classical physics-based inversion methods, DNNs typically do not rely on the underlying physics, but the trade-off is that they are restricted to be purely data-driven and rooted in the big data regime. As a result, they will always be unable to solve the inverse problem: one can not obtain the quantity of interest known as  labeled data, but only indirect observation data and a physical model. To address these issues, one natural solution  is to provide DNNs with the governing physics. To do so, in \cite{RPKarniadakis_1}, physics-informed neural networks (PINNs) were proposed for solving the forward and inverse problems by constraining  physical knowledge of governing equations  into the loss function. However, in the original forms of PINNs, unknown parameters in inverse problems are only taken into account  as constants, and they are updated together with weights and biases of neural networks.  In order to develop PINNs for the scenario where the unknown parameters are functions, two networks must be trained concurrently: one for the solutions of partial differential equations (PDEs) and another for the unknown parameters; see \cite{Liu2024failure,LPYWVJ1,R-B_HSK1,YLMK1} and the references therein. We also strongly refer the readers to \cite{BYZZhou1} for other relevant work called weak adversarial network \cite{ZBYZ_weak} that possesses the similar methodology for inverse problems. Although PINNs have shown great potential in solving various inverse problems, similar to classical methods, they have to be learned again for given new observation data. \textcolor{black}{Alternatively, physics informed neural operators (PINOs) \cite{goswami_2023_1,goswami_2022,jiao_2024,li2024PIFNO,wang_sifan_2021} that combine governing PDEs and operator learning technology \cite{li2020FNO,lu_deeponet} can overcome such a retraining issue arising from PINNs-type methods. However, these approaches primarily focus on learning forward operators and typically work as surrogate forward solvers incorporated in traditional inversion methods.} 

\textcolor{black}{We believe that directly learning the inverse operator  with the incorporation of underlying physical principles is a more effective way for solving inverse problems. However, due to the inherent ill-posedness of inverse problems, the inverse operator is frequently unbounded and usually lacks existence and uniqueness, making this task extremely difficult. This difficulty also explains why current efforts have been limited, with the emphasis primarily on learning the inverse operator using data-driven methods. Results from data-driven methods are heavily reliant on the quality of training data, and these methods are particularly difficult to generalize to out-of-distribution data. To address this, several approaches involving physical mechanisms have been proposed.  For example, the work in\cite{kaltenbach2023SINO}  introduced an  invertible network architecture within  PINOs to solve Bayesian inverse problems. The studies in  \cite{PMAG1,NB1} incorporated forward mapping constraints into their loss functions to utilize physical information, resulting in physics-aware neural networks and model-constrained Tikhonov networks, respectively. In contrast, the works in \cite{Khoo_Ying1,MYEM1} focused on carefully designing hidden layer structures in neural networks by leveraging the properties of specific inverse problems.} These works provide effective methods for leveraging physics information; however, many inverse problems require an alternative approach due to the high difficulty of embedding PDEs or forward operators into loss functions or indirectly introducing physics information into hidden layers.

Inspired by the aforementioned discussions, we present in this work a physics-aware deep decomposition approach for the limited aperture inverse obstacle scattering problem. Due to the unbounded nature of the inverse operator associated with acoustic obstacle scattering and the high ill-posedness caused by the limited aperture, learning the inverse operator will be extremely challenging. As a result, regularization techniques must be used in the network structure or optimization process. This is accomplished by constraining a penalty term created by the Herglotz operator into the deep decomposition method (DDM) loss function. Moreover,  we use the scattering information, which includes the Herglotz operator, the far-field operator, and the fundamental solution, in place of the forward mapping to construct the loss function.  In fact, it is numerically intractable to directly embed the forward mapping of the acoustic obstacle scattering problem into the network because the forward solver, which is typically based on the boundary integral equation method or the finite element method, is quite complicated.  On the other hand, DDM takes into consideration a deep learning-based data retrieval strategy that incorporates a convolution neural network, which is motivated by the research presented in \cite{Gao_Zhang1}. \textcolor{black}{To the best of our knowledge, the proposed DDM is the first to retrieve full aperture data while also incorporating physical information from the acoustic obstacle scattering model into the neural network for impenetrable obstacle detection. DDM reconstructs both the boundary and the entire aperture data simultaneously, unlike the works \cite{DLMZ1, Liu_Sun_1, Gao_Zhang1}, where the inversion process is split into two parts. More importantly, once trained, DDM can perform real-time reconstruction with newly acquired limited aperture data.}  We summarize the main features and novelties of our proposed DDM as follows:

\begin{itemize}
\item DDM is the first physics-aware machine learning approach to tackle the limited aperture inverse obstacle scattering problem.  It combines deep learning, physical information, data retrieval, and boundary recovery techniques.  More importantly, DDM is more aware of what it is learning and has some interpretability thanks to the physical information.
\item DDM does not require exact boundary information, also referred to as labeled data, during the training phase. DDM can resolve the ill-posedness caused by the relevant inverse problem by adding a regularization term associated to the Herglotz operator into the loss function.  Because DDM is trained with the guidance of the underlying physics information, it can learn the regularized inverse operator more efficiently.
\item Theoretically, we rigorously prove DDM's convergence result using the properties of the far-field and Herglotz operators. \textcolor{black}{We also demonstrate that adding relatively small noise to measured limited aperture data is useful for promoting the smoothness of the learned inverse operator.}
\item We demonstrate the effectiveness of our proposed DDM using numerical examples. It clearly shows that DDM can produce satisfactory reconstructions even when the incident and observation apertures are both extremely limited. Moreover, DDM has the benefit of real-time numerical computation because, once trained, it can solve the inverse problem in terms of forward propagation.
 \end{itemize}

The rest of the paper is organized as follows. In the following section, we present the basis setup and preliminary results for the acoustic inverse obstacle scattering model.  In Section 3, we present the DDM with its convergence result and discretization form. Numerical experiments are given in Section 4 to show the promising features of DDM and conclusions are finally made in Section 5.

\section{Problem setup and preliminaries}
Assume that $D \subset \mathbb{R}^2$ is an open and bounded simply connected domain with a $C^{2}$-boundary $\partial D$. The incident field $u^i$ is given by
\begin{equation}\label{eq:inc}
    \begin{aligned}
u^{i}(x):=u^i(x, d)=\mathrm{e}^{\mathrm{i} k x \cdot d},\ x \in \mathbb{R}^2,
    \end{aligned}
\end{equation}
where $\mathrm{i}:=\sqrt{-1}$ is the imaginary unit, $k \in \mathbb{R}_{+}$ is the wavenumber and $d \in S:=\left\{x \in \mathbb{R}^2:|x|=1\right\}$ is the direction of the propagation. The presence of the impenetrable obstacle $D$ interrupts the propagation of the incident wave $u^{i}$, yielding the exterior scattered field $u^{s}$. The direct scattering problem for the sound-soft obstacle is to find $u^{s}=u-u^{i}$ satisfying the Helmholtz equation
\begin{equation}\label{eq:Helmholtz}
\begin{aligned}
\Delta u^{s}+k^2 u^{s}=0,\ \text { in } \mathbb{R}^2 \backslash \overline{D},
\end{aligned}
\end{equation}
with the Dirichlet boundary condition
\begin{equation}\label{eq:bc}
\begin{aligned}
u^{s}+u^{i}=0, \ \text { on } \partial D,
\end{aligned}
\end{equation}
where $u$ is the total field. In addition, the following Sommerfeld radiation condition
\begin{equation}\label{eq:Sommerfeld}
\begin{aligned}
\lim\limits _{r \rightarrow \infty} r^{\frac{1}{2}}\left(\frac{\partial u^s}{\partial r}-\mathrm{i} k u^s\right)=0,\ r=|x|,
\end{aligned}
\end{equation}
holds, which characterizes the outgoing nature of the scattered field $u^s$. It is well known that the forward scattering problem \eqref{eq:Helmholtz}-\eqref{eq:Sommerfeld} is well-posed (cf. \cite{DRIA,McLean1}). The scattered field $u^s$ also has the asymptotic behavior
\begin{equation}
    \begin{aligned}
    u^{s}(x)=\frac{{\mathrm{e}}^{\mathrm{i}kr}}{\sqrt{r}}\left\{u^{\infty}(\hat{x})+\mathcal{O}\left(\frac{1}{r}\right)\right\},\ r=|x|\rightarrow\infty,
    \end{aligned}
\end{equation}
uniformly in all directions $\hat{x}=x/|x|\in S$. Here, $u^{\infty}(\hat{x})$, which is called the far-field pattern of the scattered field, is an analytic function on $S$. In what follows, we write $u^{\infty}(\hat{x},d)$ to signify such far-field data and specify its dependence on the observation direction $\hat{x}$ and the incident direction $d$.

The inverse scattering problem we are concerned with is to recover the boundary $\partial D$ from the limited aperture data $u^{\infty}(\hat{x},d)$ for $(\hat{x},d)\in \gamma^{o}\times \gamma^{i}$ at a fixed wavenumber $k$, where $\gamma^{o}\subset S$ and $\gamma^{i}\subseteq S$ are respectively the limited observation and incident aperture.  Fig. \ref{fig:scattering_model} shows an illustration of the inverse problem. Define a non-linear forward operator $\mathcal{G}$ that maps the boundary to the corresponding far-field pattern, the above mentioned inverse problem can be formulated as follows:
\begin{equation}\label{eq:limited_inverse_scattering}
    \begin{aligned}
    \partial{D}=\mathcal{G}^{-1}(u^{\infty}(\hat{x},d)),\ (\hat{x},d)\in \gamma^{o}\times \gamma^{i}.
    \end{aligned}
\end{equation}
\begin{figure}[t]
    \centering
    \subfigure{
    \includegraphics[width=0.5\textwidth]{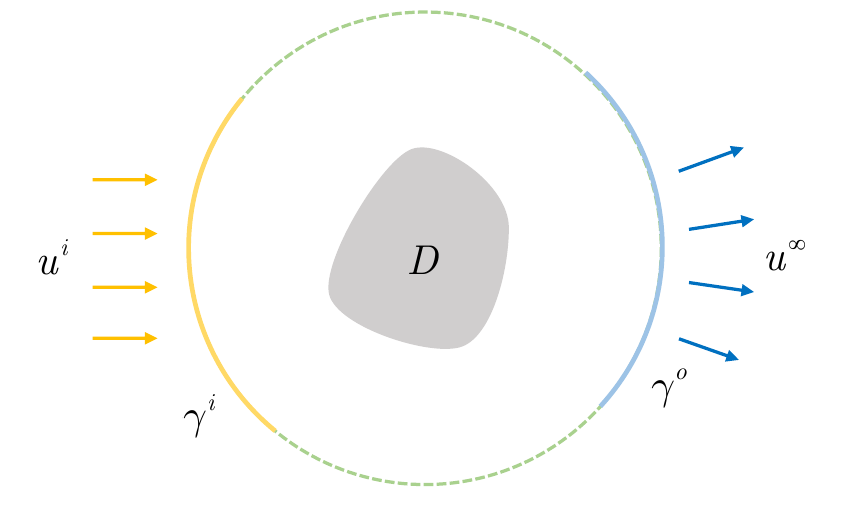}
}
    \caption{\label{fig:scattering_model}A schematic illustration of the limited aperture inverse obstacle scattering problem.}
\end{figure}

In this work, we aim to apply a physics-aware machine learning approach to solve  the inverse problem defined in\eqref{eq:limited_inverse_scattering}. Unlike traditional  data-driven neural networks, we focus on providing a specially designed neural network along with  physics information. \textcolor{black}{Our deep decomposition method(DDM) is inspired by the  decomposition method introduced by Colton and Monk in\cite{DRIA,Colton_Monk1,Colton_Monk2}. We briefly review this decomposition method, which involves two main phases:  first, an approximate solution to the interior boundary value problem within the obstacle is obtained using Herglotz wavefunctions; then, the boundary of the obstacle is identified as the curve where the boundary condition of the interior problem is satisfied.} We begin by defining a superposition of incident plane waves as follows:
\begin{equation}\label{eq:v_i}
\begin{aligned}
v^{i}(x) = \int_{S}\mathrm{e}^{\mathrm{i}kx\cdot d}g(d)\mathrm{d}s(d),\ x\in \mathbb{R}^{2},
\end{aligned}
\end{equation}
with the weight function $g\in L^{2}(S)$. In this case, $v^{i}$ is also referred to a Herglotz wave function, where $g$ is the Herglotz kernel.

Let us define the following interior boundary value problem, assuming that $k^{2}$ is not a Dirichlet eigenvalue to the negative Laplacian in $D$:
\begin{equation}\label{eq:interior_bvp}
\begin{aligned}
\begin{cases}\Delta v^{i}(x)+k^2 v^{i}(x)=0, & \text { in } D, \\ v^{i}(x)=-\Phi(x,z), & \text { on } \partial D, \end{cases}
\end{aligned}
\end{equation}
where $\Phi(x,z)=\frac{{\mathrm{i}}}{{4}}H_{0}^{(1)}(k|x-z|)$ is the outgoing Green function of Helmholtz equation in $\mathbb{R}^2$ and $H_{0}^{(1)}$ is the Hankel function of the first kind of order zero. Moreover, the point $z$ is contained in the sound-soft obstacle $D$ \textcolor{black}{such that the Herglotz kernel $g$ is bounded. Define the  far-field operator $F:L^{2}(S)\rightarrow L^{2}(S)$ as:
\begin{equation}\label{eq:farfield_operator}
    \begin{aligned}
    (Fg)(\hat{x})=\int_{S}u^{\infty}(\hat{x},d)g(d)\mathrm{d}s(d),\ \hat{x} \in S.
    \end{aligned}
\end{equation}
Applying the Dirichlet boundary condition \eqref{eq:bc} and treating $v^{i}$ as the incident wave in \eqref{eq:Helmholtz}-\eqref{eq:Sommerfeld},  we define a  bounded and injective operator $A: L^{2}(\partial D)\rightarrow L^{2}(S)$ that maps the boundary values of radiating solutions to their corresponding far-field pattern \cite{DRIA} as follow:
\begin{equation}
A(-v^{i}|_{\partial D}):= Fg.
\end{equation} 
Moreover, we have  $A(\Phi(x,z)|_{\partial D})=\Phi^{\infty}(\hat{x},z)$, where $\Phi^{\infty}(\hat{x},z)=\frac{\mathrm{e}^{\mathrm{i}\pi/4}}{\sqrt{8\pi k}}\mathrm{e}^{-\mathrm{i}k\hat{x}\cdot z}$ represents the far-field pattern of the fundamental solution $\Phi(x,z)$. Because of the boundary condition $v^{i}|_{\partial D} = -\Phi(x,z)|_{\partial D}$ in \eqref{eq:interior_bvp} and the fact that $A$ is injective, we can obtain the integral equation of the first kind
\begin{equation}\label{eq:far_operator_g}
    \begin{aligned}
    Fg=\Phi^{\infty}(\hat{x},z).
    \end{aligned}
\end{equation} 
It is interesting to notice that \eqref{eq:far_operator_g} does not involve the knowledge of the exact boundary $\partial D$, which provides an effective way of determining $g$. However, the equation \eqref{eq:far_operator_g} is ill-posed, and hence the regularization scheme is required. If we acquire the approximate solution $g_{\alpha}$ of \eqref{eq:far_operator_g} by the regularization scheme with the corresponding regularization parameter $\alpha$, the boundary of the obstacle $D$ can then be found by solving the non-linear equation
\begin{equation}\label{eq:g_operator_bc}
    \begin{aligned}
    v^{i}_{\alpha}(x)+\Phi(x,z)=0,
    \end{aligned}
\end{equation}
where $v^{i}_{\alpha}(x)$ is the approximate Herglotz wave function with kernel $g_{\alpha}$.}

\textcolor{black}{Note that solving \eqref{eq:g_operator_bc} over the entire domain is not conducive to optimization.  To streamline the solution process into an optimization framework, we introduce an admissible curve, $\Lambda$, belonging to the compact class $U$, which is used to approximate $\partial D$. Similar to the Herglotz wave function $v^i$ defined in \eqref{eq:v_i}, we define the following Herglotz operator $H:L^{2}(S)\rightarrow L^{2}(\Lambda)$:
\begin{equation}\label{eq:Herglotz}
\begin{aligned}
(Hg)(x) = \int_{S}\mathrm{e}^{\mathrm{i}kx\cdot d}g(d)\mathrm{d}s(d),\ x\in \Lambda.
\end{aligned}
\end{equation}
The approximate boundary $\partial D$ can then be identified by solving the following equation over the admissible cure $\Lambda$:
\begin{equation}\label{eq:Hg_operator_for_solving_bc}
    \begin{aligned}
    (Hg)(x)+\Phi(x,z)=0,\quad x\in \Lambda.
    \end{aligned}
\end{equation}
With this reformulation, the inverse obstacle scattering problem becomes an optimization problem in both  $g$  and  $\Lambda$, with a regularization term. The objective function for this optimization is given by:
\begin{equation}\label{eq:CM_functional}
\mathcal{J}(g, \Lambda ; \alpha):=\left\|F g-\Phi^{\infty}(\hat{x},z)\right\|_{L^2\left(S\right)}^2+\gamma\left\|H g+\Phi(x,z)\right\|_{L^2(\Lambda)}^2+\alpha\|H g\|_{L^2\left(\Gamma\right)}^2,
\end{equation}
where $\Gamma$ is the $C^{2}$-curve  containing $\Lambda$ in its interior, $\alpha\in \mathbb{R}_{+}$ is the regularization parameter and $\gamma$ is the coupling parameter. Here, the first and second terms in the right hand side of \eqref{eq:CM_functional} are derived by \eqref{eq:far_operator_g} and \eqref{eq:Hg_operator_for_solving_bc}, respectively.  The regularization parameter  $\alpha$ is used to resolve the ill-posed nature by $\alpha\|H g\|_{L^2\left(\Gamma\right)}^2$, while  $\gamma$ is used to  balance the terms $\left\|F g-\Phi^{\infty}(\hat{x},z)\right\|_{L^2\left(S\right)}^2$ and $\left\|H g+\Phi(x,z)\right\|_{L^2(\Lambda)}^2$. } It is worth noting that \eqref{eq:CM_functional} may work similarly to the neural network's loss function. This is the mathematical basis for creating a scattering-based network architecture in the following section.

\section{Deep decomposition method}
In this section, we propose a deep decomposition method (DDM) to approximate the inverse map $\mathcal{G}^{-1}$. Furthermore, we will discuss its convergence result and discretization form.

\subsection{DDM for the inverse obstacle scattering problem}
\textcolor{black}{The DDM addresses the limited aperture inverse obstacle scattering problem in \eqref{eq:limited_inverse_scattering} by dividing it into two primary sub-problems: data retrieval and boundary recovery.   Additionally, DDM further decomposes the boundary recovery process into simultaneous Herglotz kernel computation and boundary reconstruction, similar to the decomposition method introduced by Colton and Monk \cite{DRIA, Colton_Monk1, Colton_Monk2}. This approach enables boundary recovery from a physics-informed perspective.  The entire procedure  is implemented using a deep learning approach, hence the name deep decomposition method.} 

In order to retrieve the full aperture data, a data completion network (DCnet) is designed in the first phase of DDM, that is
\begin{equation}\label{eq:DCnet}
u_{\theta_{DC}}^{\infty}(\hat{x},d)|_{S \times S}=\mathrm{DCnet}(u^{\infty}(\hat{x},d)|_{\gamma^{o}\times \gamma^{i}}),
\end{equation}
where $u^{\infty}(\hat{x},d)|_{\gamma^{o}\times \gamma^{i}}$ denoting $u^{\infty}(\hat{x},d)$ for $(\hat{x},d)\in \gamma^{o}\times \gamma^{i}$ is the input of DDM and $u_{\theta_{DC}}^{\infty}(\hat{x},d)|_{S \times S}$ means the recovered $u_{\theta_{DC}}^{\infty}(\hat{x},d)$ for $(\hat{x},d)\in S\times S$. \eqref{eq:DCnet} can be done by minimizing the following functional
\begin{equation}\label{eq:DC_functional}
\mathcal{J}_{DC}(u_{\theta_{DC}}^{\infty}(\hat{x},d)):= \int_{S}\int_{S}|u^{\infty}(\hat{x},d)-u_{\theta_{DC}}^{\infty}(\hat{x},d)|^{2}\mathrm{d}s(d)\mathrm{d}s(\hat{x}).
\end{equation}
In fact, by using the reciprocity relation $u^{\infty}(\hat{x},d)=u^{\infty}(-d, -\hat{x})$, one can recover the unknown
far-field data on some part from given limited data $u^{\infty}(\hat{x},d)|_{\gamma^{o}\times \gamma^{i}}$. However, in this work, we are more concerned with directly learning the analytic continuation property from limited aperture data to the corresponding full aperture data. This is why we establish the functional \eqref{eq:DC_functional} from the data-based viewpoint for \eqref{eq:DCnet}.

After finding $u_{\theta_{DC}}^{\infty}(\hat{x},d)$, inspired by the preliminary results  for the full aperture data, we further build a Herglotz kernel network named as HKnet for $g$ and a boundary reconstruction network named as BRnet for $\Lambda$ in the second phase of DDM. The specific forms of these two parts are as follows:
\begin{equation}\label{eq:HK_net}
g_{\theta_{HK}}=\mathrm{HKnet}(u_{\theta_{DC}}^{\infty}(\hat{x},d)|_{S \times S}),
\end{equation}
and
\begin{equation}\label{eq:BR_net}
\Lambda_{\theta_{BR}}=\mathrm{BRnet}(u_{\theta_{DC}}^{\infty}(\hat{x},d)|_{S \times S}).
\end{equation}
Then, we focus on designing a physics-aware loss function comparable to \eqref{eq:CM_functional}. To do so, we first define a new far-field operator $F_{\theta_{DC}}:L^{2}(S)\rightarrow L^{2}(S)$
\begin{equation}\label{eq:farfield_operator_theta}
    \begin{aligned}
    (F_{\theta_{DC}}g_{\theta_{HK}})(\hat{x})=\int_{S}u_{\theta_{DC}}^{\infty}(\hat{x},d)g_{\theta_{HK}}(d)\mathrm{d}s(d),\ \hat{x} \in S.
    \end{aligned}
\end{equation}
Therefore, by \eqref{eq:farfield_operator_theta} and the Herglotz operator, the physics-aware loss function for learning $g_{\theta_{HK}}$ and $\Lambda_{\theta_{BR}}$ is defined as follows:
\begin{equation}\label{eq:CM_functional_theta}
    \begin{aligned}
\mathcal{J}_{phy}(g_{\theta_{HK}}, \Lambda_{\theta_{BR}};\alpha):=&\left\|F_{\theta_{DC}}g_{\theta_{HK}}-\Phi^{\infty}(\hat{x},z)\right\|_{L^2\left(S\right)}^2+\gamma\left\|Hg_{\theta_{HK}}+\Phi(x,z)\right\|_{L^2(\Lambda_{\theta_{BR}})}^2\\
&+\alpha\|Hg_{\theta_{HK}}\|_{L^2\left(\Gamma_{\theta_{BR}}\right)}^2,
    \end{aligned}
\end{equation}
where the $C^{2}$-curve $\Gamma_{\theta_{BR}}$ contains $\Lambda_{\theta_{BR}}$ in its interior. Note that the optimization functional \eqref{eq:CM_functional_theta} does not require any information of the exact boundary $\partial D$.

Let $\Theta$ be a finite dimensional parameter space. The proposed DDM aims to minimize the combined cost functional
\begin{equation}\label{eq:DDM_functional_theta}
\mathcal{J}_{DDM}(u_{\theta_{DC}}^{\infty}(\hat{x},d), g_{\theta_{HK}}, \Lambda_{\theta_{BR}};\alpha)=\mathcal{J}_{phy}(g_{\theta_{HK}}, \textcolor{black}{\Lambda_{\theta_{BR}}};\alpha)+\beta_{DC}\mathcal{J}_{DC}(u_{\theta_{DC}}^{\infty}(\hat{x},d)),
\end{equation}
for determining the optimal $\Lambda_{\theta_{BR}^{\ast}}$ by finding the optimal parameters $\theta_{DC}^{\ast},\theta_{HK}^{\ast},\theta_{BR}^{\ast}\in\Theta$, and $\beta_{DC}$ is a penalty factor that balances the physics-based loss $\mathcal{J}_{phy}$ and the data-based loss $\mathcal{J}_{DC}$. The designed loss function \eqref{eq:DDM_functional_theta} can well address both the nonlinearity and the ill-posedness caused by the inverse problem itself. Thus, the parametric map $\mathcal{NN}_{\theta_{DC},\theta_{BR}}$ constructed by DDM for approximating the inverse operator $\mathcal{G}^{-1}$ can be summarized as
\begin{equation}\label{eq:DDM_net}
    \begin{aligned}
    \mathcal{NN}_{\theta_{DC},\theta_{BR}}=\mathrm{BRnet}(\mathrm{DCnet})).
    \end{aligned}
\end{equation}
The general workflow of DDM is presented in Fig. \ref{fig:DDM_model}.
\begin{rem}\label{rem_ddm_two_inverse}
As we have pointed out earlier, retrieving the full aperture data and recovering the boundary from given limited aperture data are addressed simultaneously in the learning stage of DDM, since the above proposed three networks are simultaneously trained and their parameters $\theta_{DC},\theta_{HK},\theta_{BR}\in\Theta$ are simultaneously updated by optimizing \eqref{eq:DDM_functional_theta}.
\end{rem}
\begin{figure}[t]
    \centering
    \subfigure{
    \includegraphics[width=0.85\textwidth]{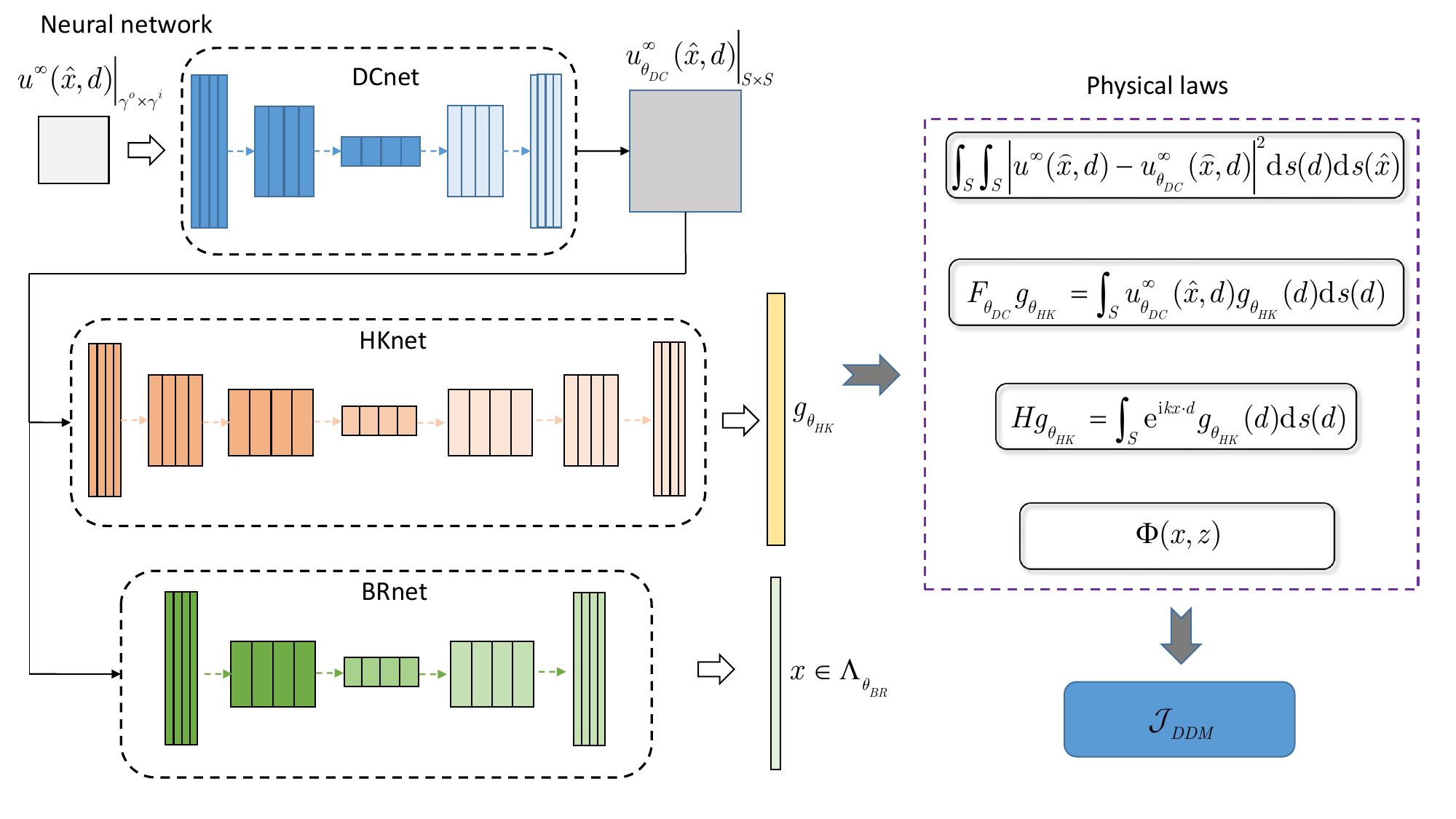}
}
    \caption{\label{fig:DDM_model}A schematic illustration of DDM.}
\end{figure}

\subsection{Convergence analysis}
We are going to investigate the convergence results of the DDM in this subsection.  To this end, we first introduce the Herglotz wave function
\begin{equation}\label{eq:Herglotz wave}
\begin{aligned}
v(x) = \int_{S}\mathrm{e}^{\mathrm{i}kx\cdot d}g(d)\mathrm{d}s(d),\ x\in \mathbb{R}^{2},
\end{aligned}
\end{equation}
which shares the same form of \eqref{eq:v_i}. Moreover, similar to \cite{DRIA}, we set $\gamma=1$ and \textcolor{black}{$\beta_{DC}=1$} for the following theory. Because the Herglotz operator $H$ does not have a bounded inverse, we present the following definition of an optimal curve.
\begin{defn}\label{defn:admissible_set}
Given the limited aperture far-field $u^{\infty}(\hat{x},d) \in L^{2}(\gamma^{o}\times \gamma^{i})$ and a regularization parameter $\alpha\in \mathbb{R}_{+}$, a boundary $\Lambda_{\theta_{BR}^{\ast}}\in U$ is called optimal if there exists an optimal $u_{\theta_{DC}^{\ast}}^{\infty}(\hat{x},d)\in L^{2}(S\times S)$ such that
\begin{equation}\label{eq:mini_J_DDM}
\inf\limits_{g_{\theta_{HK}}\in L^{2}(S)}\mathcal{J}_{DDM}(u_{\theta_{DC}^{\ast}}^{\infty}(\hat{x},d), g_{\theta_{HK}}, \Lambda_{\theta_{BR}^{\ast}};\alpha)=m(\alpha),
\end{equation}
where
$$
m(\alpha)=\inf _{\theta_{DC},\theta_{HK},\theta_{BR}\in\Theta} \mathcal{J}_{DDM}(u_{\theta_{DC}}^{\infty}(\hat{x},d), g_{\theta_{HK}}, \Lambda_{\theta_{BR}};\alpha),
$$
and $U$ is the compact admissible class.
\end{defn}

The next two lemmas, which represent the existence of the optimal curve and the convergence property of the cost functional, respectively, are provided for the DDM convergence analysis.

\begin{lem}\label{lem:opt_lambda}
For each $\alpha\in \mathbb{R}_{+}$, there exists an optimal curve $\Lambda_{\theta_{BR}^{\ast}}\in U$.
\end{lem}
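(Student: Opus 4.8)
The plan is to follow the classical existence argument for an optimal surface in the Colton--Monk decomposition method, adapted to the coupled loss \eqref{eq:DDM_functional_theta} that additionally carries the data-completion variable $u^\infty_{\theta_{DC}}$. Writing $\Psi(u^\infty_{\theta_{DC}},\Lambda_{\theta_{BR}}):=\inf_{g_{\theta_{HK}}\in L^2(S)}\mathcal J_{DDM}(u^\infty_{\theta_{DC}},g_{\theta_{HK}},\Lambda_{\theta_{BR}};\alpha)$, the claim reduces to showing that the outer infimum $m(\alpha)=\inf_{u^\infty_{\theta_{DC}},\Lambda_{\theta_{BR}}}\Psi$ in \eqref{eq:mini_J_DDM} is attained at some pair $(u^\infty_{\theta_{DC}^*},\Lambda_{\theta_{BR}^*})$ with $\Lambda_{\theta_{BR}^*}\in U$. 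I would start from a minimizing sequence $(u^\infty_n,\Lambda_n)$ with $\Psi(u^\infty_n,\Lambda_n)\to m(\alpha)$, and for each $n$ select a near-optimal kernel $g_n$ so that $\mathcal J_{DDM}(u^\infty_n,g_n,\Lambda_n;\alpha)\le\Psi(u^\infty_n,\Lambda_n)+1/n$.

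Next I would extract limits. Since $U$ is a compact admissible class, a subsequence satisfies $\Lambda_n\to\Lambda_{\theta_{BR}^*}\in U$, with the associated enclosing curves $\Gamma_n\to\Gamma^*$. The data term $\mathcal J_{DC}$ in \eqref{eq:DC_functional} stays bounded along the sequence, which bounds $u^\infty_n$ in $L^2(S\times S)$ and produces a weak limit $u^\infty_{\theta_{DC}^*}$; because the operator $F_{\theta_{DC}}$ in \eqref{eq:farfield_operator_theta} depends linearly and continuously on its kernel, this controls the far-field operators $F_{\theta_{DC,n}}$ toward $F_{\theta_{DC}^*}$ in the appropriate sense. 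The objective is then to pass to the limit in the three terms of $\mathcal J_{phy}$ in \eqref{eq:CM_functional_theta} and invoke weak lower semicontinuity of the squared norms to obtain $\mathcal J_{DDM}(u^\infty_{\theta_{DC}^*},g^*,\Lambda_{\theta_{BR}^*};\alpha)\le\liminf_n\mathcal J_{DDM}(u^\infty_n,g_n,\Lambda_n;\alpha)=m(\alpha)$ for a suitable limiting kernel $g^*$; since $\Psi\ge m(\alpha)$ trivially, this forces $\Psi(u^\infty_{\theta_{DC}^*},\Lambda_{\theta_{BR}^*})=m(\alpha)$, i.e. $\Lambda_{\theta_{BR}^*}$ is optimal.

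The main obstacle is the a priori control of the kernel sequence $g_n$. Unlike the standard decomposition functional, the regularization here is $\alpha\|Hg\|^2_{L^2(\Gamma)}$ rather than $\alpha\|g\|^2_{L^2(S)}$, and since the Herglotz operator $H$ is compact with unbounded inverse, a bound on $\|Hg_n\|_{L^2(\Gamma_n)}$ (together with the bound on $\|F_{\theta_{DC,n}}g_n\|$ coming from the first term) does \emph{not} bound $\|g_n\|_{L^2(S)}$; hence one cannot extract a weak $L^2(S)$-limit of $g_n$ directly. I would address this either by tracking the images rather than the kernels---using compactness of $F_{\theta_{DC}}$ and $H$ together with interior elliptic estimates, so that $F_{\theta_{DC,n}}g_n$ converges on $S$ and the Herglotz wave functions $Hg_n$ converge on $\Gamma^*$ and, via $\Lambda_n\to\Lambda_{\theta_{BR}^*}$, in the moving-boundary trace term---and then reconstructing a single competitor kernel $g^*$ for the limit problem; or, reflecting that the networks in fact optimize over a finite-dimensional parameter space $\Theta$, by restricting the kernels to a bounded admissible set, which restores weak compactness of $(g_n)$ and makes the lower-semicontinuity step routine. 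Handling the trace on the moving curve $\Lambda_n$ and the cross term $(F_{\theta_{DC}^*}-F_{\theta_{DC,n}})g_n$ simultaneously in the absence of this bound is the delicate technical point I expect to fight with.
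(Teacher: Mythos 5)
Your plan is, at its core, the paper's own argument: the paper likewise takes a minimizing sequence $(u_{\theta_{DC}^{n}}^{\infty}, g_{\theta_{HK}^{n}}, \Lambda_{\theta_{BR}^{n}})$, uses compactness of $U$ to get $\Lambda_{\theta_{BR}^{n}}\rightarrow\Lambda_{\theta_{BR}^{\ast}}$, and---exactly as in the first of your two remedies---never attempts to bound $\|g_{\theta_{HK}^{n}}\|_{L^{2}(S)}$. Instead it extracts weak convergence of the \emph{images} $Hg_{\theta_{HK}^{n}}$ from the bound $\alpha\|Hg_{\theta_{HK}^{n}}\|_{L^{2}(\Gamma_{\theta_{BR}^{n}})}^{2}\leq \mathcal{J}_{DDM}\rightarrow m(\alpha)$, and invokes Theorem 5.27 of \cite{DRIA} (weakly convergent Herglotz boundary data yields locally uniform convergence of the Herglotz wave functions on compact interior subsets) to transfer the trace term from the moving curve $\Lambda_{\theta_{BR}^{n}}$ to the limit curve $\Lambda_{\theta_{BR}^{\ast}}$. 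So your diagnosis of the crux---the regularizer controls $Hg$, not $g$, so weak compactness of the kernels is unavailable---is exactly right, and your image-tracking route is the one the paper follows. Where you diverge is the endgame, and your version is heavier than necessary: Definition \ref{defn:admissible_set} does not require a minimizing kernel, only that the infimum over $g$ at the limit configuration equal $m(\alpha)$, so there is no need to ``reconstruct a single competitor kernel $g^{\ast}$''; indeed such a $g^{\ast}$ may not exist, since $H$ and the far-field operator are compact with dense, non-closed range, so the inner infimum need not be attained. The paper simply keeps $u_{\theta_{DC}^{n}}^{\infty}$ and $g_{\theta_{HK}^{n}}$ as competitors and shows $\mathcal{J}_{DDM}(u_{\theta_{DC}^{n}}^{\infty},g_{\theta_{HK}^{n}},\Lambda_{\theta_{BR}^{\ast}};\alpha)$ has the same limit $m(\alpha)$ as along the moving curves; with the curve frozen at $\Lambda_{\theta_{BR}^{\ast}}$, this already certifies optimality. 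This also dissolves the cross term $(F_{\theta_{DC}^{\ast}}-F_{\theta_{DC}^{n}})g_{\theta_{HK}^{n}}$ you expected to fight with: it arises only if one swaps $u_{\theta_{DC}^{n}}^{\infty}$ for a weak limit inside the bilinear far-field term, which the paper's argument never does (to be fair, the paper is itself terse about exhibiting the optimal $u_{\theta_{DC}^{\ast}}^{\infty}$ demanded by the definition, so your instinct to extract a weak limit of the data sequence is not misplaced---but following the paper, that limit is never paired against the uncontrolled kernel sequence). Your second remedy, restricting the kernels to a bounded set via the finite-dimensional parameter space $\Theta$, would restore weak compactness but proves a different statement; the paper deliberately formulates the optimality notion over all of $L^{2}(S)$ precisely so that no such restriction is needed.
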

\begin{proof}
Let $(u_{\theta_{DC}^{n}}^{\infty}(\hat{x},d), g_{\theta_{HK}^{n}},\Lambda_{\theta_{BR}^{n}})$ be a minimizing sequence from $L^{2}(S\times S)\times L^{2}(S) \times U$, that is,
$$
\lim\limits_{n\rightarrow \infty}\mathcal{J}_{DDM}(u_{\theta_{DC}^{n}}^{\infty}(\hat{x},d), g_{\theta_{HK}^{n}}, \Lambda_{\theta_{BR}^{n}};\alpha)=m(\alpha).
$$

Assume that $\Lambda_{\theta_{BR}^{n}}\rightarrow \Lambda_{\theta_{BR}^{\ast}},n\rightarrow\infty$, since $U$ is compact. We can further assume that the sequence $(Hg_{\theta_{HK}^{n}})$ is weakly convergent in the space $L^{2}(\Gamma_{\theta_{BR}^{n}})$, due to the following boundedness
$$\alpha\|Hg_{\theta_{HK}^{n}}\|_{L^2\left(\Gamma_{\theta_{BR}^{n}}\right)}^2\leq \mathcal{J}_{DDM}(u_{\theta_{DC}^{n}}^{\infty}(\hat{x},d), g_{\theta_{HK}^{n}}, \Lambda_{\theta_{BR}^{n}};\alpha)\rightarrow m(\alpha),\ n\rightarrow\infty.$$
By Theorem 5.27 in \cite{DRIA}, the weak convergence of the boundary data $(Hg_{\theta_{HK}^{n}})$ on $\Gamma_{\theta_{BR}^{n}}$ implies that the Herglotz wave functions $v_{n}$ with the Herglotz kernel $g_{\theta_{HK}^{n}}$ converge uniformly to a solution of the Helmholtz equation on the compact subsets of the interior of the curve $\Gamma_{\theta_{BR}^{n}}$. Then,
\begin{equation}\label{eq:limit_Hg}
\lim\limits_{n\rightarrow\infty}\left\|Hg_{\theta_{HK}^{n}}+\Phi(x,z)\right\|_{L^2(\Lambda_{\theta_{BR}^{n}})}^2=\lim\limits_{n\rightarrow\infty}\left\|Hg_{\theta_{HK}^{n}}+\Phi(x,z)\right\|_{L^2(\Lambda_{\theta_{BR}^{\ast}})}^2,
\end{equation}
holds. Therefore, it is clear that
$$\lim\limits_{n\rightarrow\infty}\mathcal{J}_{DDM}(u_{\theta_{DC}^{n}}^{\infty}(\hat{x},d), g_{\theta_{HK}^{n}}, \Lambda_{\theta_{BR}^{n}};\alpha)=\lim\limits_{n\rightarrow\infty}\mathcal{J}_{DDM}(u_{\theta_{DC}^{n}}^{\infty}(\hat{x},d), g_{\theta_{HK}^{n}}, \Lambda_{\theta_{BR}^{\ast}};\alpha).$$
This completes the proof.
\end{proof}
\begin{lem}\label{lem:m_alpha_convergence}
Let $u^{\infty}(\hat{x},d)$ be the exact far-field pattern of the obstacle $D$ for all incident directions $d$ such that $\partial D$ belongs to $U$. Moreover, assume that given $\varepsilon_{1}$ and $\varepsilon_2$, the recovered far-field pattern $u_{\theta_{DC}}^{\infty}(\hat{x},d)$ and far-field operator $F_{\theta_{DC}}$ satisfy
\begin{equation}\label{eq:limit_far_farop}
\|u^{\infty}(\hat{x},d)-u_{\theta_{DC}}^{\infty}(\hat{x},d)\|_{L^{2}(S\times S)}<\varepsilon_{1},\ \|F-F_{\theta_{DC}}\|<\varepsilon_2.
\end{equation}
Then, the functional $m(\alpha)$ is convergent to zero when the parameter $\alpha$ tends to zero, i.e.,
\begin{equation}\label{eq:m_alpha_0}
\lim\limits_{\alpha\rightarrow 0}m(\alpha)=0.
\end{equation}
\end{lem}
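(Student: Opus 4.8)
The plan is to exploit the fact that $m(\alpha)$ is an infimum over the network parameters, so it suffices to construct one admissible triple whose cost can be driven to zero as $\alpha\to 0$. Since $\partial D\in U$ by hypothesis, I would take the admissible curve to be $\Lambda=\partial D$ itself (with $\Gamma$ a fixed $C^2$-curve enclosing $\partial D$) and choose a near-optimal data-completion output, so that $\mathcal{J}_{DC}<\varepsilon_1^2$ and $\|F-F_{\theta_{DC}}\|<\varepsilon_2$ as in \eqref{eq:limit_far_farop}. The remaining freedom is the Herglotz kernel $g$. Because $k^2$ is not a Dirichlet eigenvalue, the interior problem \eqref{eq:interior_bvp} with boundary data $-\Phi(\cdot,z)|_{\partial D}$ has a unique solution $w$; invoking the denseness of Herglotz wave functions among solutions of the Helmholtz equation in $D$ (cf. \cite{DRIA}), I would select a sequence of kernels $g_n\in L^2(S)$ whose Herglotz wave functions approximate $w$, so that $\|Hg_n+\Phi(\cdot,z)\|_{L^2(\partial D)}\to 0$ as $n\to\infty$, which makes the second term of $\mathcal{J}_{phy}$ vanish along the sequence.

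The key observation is that the far-field term is slaved to this boundary term through the bounded, injective operator $A:L^2(\partial D)\to L^2(S)$. Using $A(-v^i|_{\partial D})=Fg$ and $A(\Phi(\cdot,z)|_{\partial D})=\Phi^\infty(\hat x,z)$ together with $v^i|_{\partial D}=Hg_n$ on $\Lambda=\partial D$, one gets $Fg_n-\Phi^\infty=-A\big((Hg_n+\Phi)|_{\partial D}\big)$, whence $\|Fg_n-\Phi^\infty\|_{L^2(S)}\le\|A\|\,\|Hg_n+\Phi\|_{L^2(\partial D)}$. Replacing $F$ by the learned $F_{\theta_{DC}}$ and applying the triangle inequality, I would bound the first term of $\mathcal{J}_{phy}$ by $\big(\varepsilon_2\|g_n\|_{L^2(S)}+\|A\|\,\|Hg_n+\Phi\|_{L^2(\partial D)}\big)^2$. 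Thus both data-fidelity terms are controlled by the single quantity $\|Hg_n+\Phi\|_{L^2(\partial D)}$ (which tends to $0$), the data-completion errors $\varepsilon_1,\varepsilon_2$, and the perturbation $\varepsilon_2\|g_n\|_{L^2(S)}$.

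Finally I would send $\alpha\to 0$ by a diagonal argument. For fixed $n$ the regularization term $\alpha\|Hg_n\|_{L^2(\Gamma)}^2\to 0$ as $\alpha\to 0$, while the two data-fidelity terms are already small for large $n$; choosing $n=n(\alpha)$ growing slowly enough forces $\mathcal{J}_{phy}\to 0$, and combining with $\mathcal{J}_{DC}<\varepsilon_1^2$ yields $\limsup_{\alpha\to0}m(\alpha)\le C(\varepsilon_1,\varepsilon_2)$ with $C\to0$ as the data-completion errors vanish, giving \eqref{eq:m_alpha_0}.

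The main obstacle will be the order-of-limits interplay: the denseness approximation is intrinsically ill-posed, so $\|g_n\|_{L^2(S)}$ and $\|Hg_n\|_{L^2(\Gamma)}$ generally blow up as $n\to\infty$, and they appear multiplied by $\varepsilon_2$ and $\alpha$, respectively. The delicate point is therefore to fix $n$ first (making the boundary and far-field terms small), and only afterwards to shrink $\alpha$ — and implicitly the data-completion errors — fast enough relative to the now-fixed constants $\|g_n\|_{L^2(S)}$ and $\|Hg_n\|_{L^2(\Gamma)}$. I would also verify that $\Lambda=\partial D$ and its enclosing $\Gamma$ are genuinely attainable within the admissible class $U$ and the range of BRnet, so that the constructed triple is a legitimate competitor in the infimum defining $m(\alpha)$.
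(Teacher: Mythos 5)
Your proposal follows essentially the same route as the paper's proof: both take $\Lambda=\partial D$ as the competitor, invoke the Herglotz denseness result (Theorem 5.22 in \cite{DRIA}) to get $\|Hg+\Phi(\cdot,z)\|_{L^{2}(\partial D)}<\varepsilon_3$, transfer this to the far-field term via the bounded injective operator $A$ together with a triangle inequality giving the bound $\varepsilon_3\|A\|+\varepsilon_2\|g\|_{L^{2}(S)}$, and then let $\alpha\to 0$ before shrinking the approximation errors. Your explicit attention to the order of limits --- fixing the kernel first so that the possibly large norms $\|g\|_{L^{2}(S)}$ and $\|Hg\|_{L^{2}(\Gamma)}$ are frozen constants before $\alpha$ and $\varepsilon_2$ are sent to zero --- is precisely what the paper's terser closing step (``since $\varepsilon_1$, $\varepsilon_2$ and $\varepsilon_3$ are arbitrary'') implicitly relies on, so your write-up is, if anything, slightly more careful on that point.
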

\begin{proof}
Using Theorem 5.22 in \cite{DRIA} with given $\varepsilon_{3}$, there exists $g_{\theta_{HK}}\in L^{2}(S)$ which satisfies
$$\|H g_{\theta_{HK}}+\Phi(x,z)\|_{L^{2}(\partial D)}< \varepsilon_{3}.$$
Then, it holds that
$$\|Fg_{\theta_{HK}}-\Phi^{\infty}(\hat{x},z)\|_{L^{2}(S)}\leq \|A\|\|Hg_{\theta_{HK}}+\Phi(x,z)\|_{L^{2}(\partial D)}.$$
From \eqref{eq:limit_far_farop}, we further have
\begin{equation}
\begin{aligned}
&\|F_{\theta_{DC}}g_{\theta_{HK}}-\Phi^{\infty}(\hat{x},z)\|_{L^{2}(S)}\\
\leq &\|Fg_{\theta_{HK}}-\Phi^{\infty}(\hat{x},z)\|_{L^{2}(S)}+\|F_{\theta_{DC}}g_{\theta_{HK}}-Fg_{\theta_{HK}}\|_{L^{2}(S)}\\
\leq &\|A\|\|Hg_{\theta_{HK}}+\Phi(x,z)\|_{L^{2}(\partial D)} + \|F_{\theta_{DC}}-F\|\|g_{\theta_{HK}}\|_{L^{2}(S)}\\
\leq & \varepsilon_{3}\|A\|+ \varepsilon_2\|g_{\theta_{HK}}\|_{L^{2}(S)}.
\nonumber
\end{aligned}
\end{equation}
Using \eqref{eq:limit_far_farop} again, we have
\textcolor{black}{\begin{equation}
\begin{aligned}
&\mathcal{J}_{DDM}(u_{\theta_{DC}}^{\infty}(\hat{x},d), g_{\theta_{HK}}, \partial D;\alpha)\\
&\leq\varepsilon^{2}_{1}+(\varepsilon_{3}\|A\|+ \varepsilon_2\|g_{\theta_{HK}}\|_{L^{2}(S)})^{2}+\alpha\|Hg_{\theta_{HK}}\|_{L^2\left(\Gamma_{\theta_{BR}}\right)}^2+\varepsilon^{2}_{3}\\
&\overset{\alpha\rightarrow 0}{\longrightarrow}\varepsilon^{2}_{1} + (\varepsilon_{3}\|A\|+ \varepsilon_2\|g_{\theta_{HK}}\|_{L^{2}(S)})^{2}+\varepsilon^{2}_{3}.
\nonumber
\end{aligned}
\end{equation}}
Since $\varepsilon_1$, $\varepsilon_2$ and $\varepsilon_3$ are arbitrary, \eqref{eq:m_alpha_0} follows. This completes the proof.
\end{proof}

Now, we are ready to present the main convergence result.
\begin{thm}\label{thm:convergence_result}
Let $(\alpha_{n})$ be a null sequence and $(\Lambda_{\theta_{BR}^{n}})$ be a corresponding sequence of optimal curves for the regularization parameter $\alpha_{n}$. Then, there exists a convergent subsequence of $(\Lambda_{\theta_{BR}^{n}})$. Suppose that for all incident directions $u^{\infty}(\hat{x},d)$ is the exact far-field pattern of a sound-soft obstacle $D$ such that $\partial D$ belongs to the compact set $U$. For $j\rightarrow\infty$, the recovered far-field pattern $u_{\theta_{DC}^{j}}^{\infty}(\hat{x},d)$ and far-field operator $F_{\theta_{DC}^{j}}$ are assumed to satisfy
\begin{equation}\label{eq:limit_far_farop_for_convergence}
\|u^{\infty}(\hat{x},d)-u_{\theta_{DC}^{j}}^{\infty}(\hat{x},d)\|_{L^{2}(S\times S)}\rightarrow 0,\ \|F-F_{\theta_{DC}^{j}}\|\rightarrow 0.
\end{equation}
Furthermore, assume that the solution $v^{i}$ to the associated interior Dirichlet problem \eqref{eq:interior_bvp} can be extended as a solution to the Helmholtz equation across the boundary $\partial D$ into the
interior of $\Gamma_{\theta_{BR}}$ with continuous boundary values on $\Gamma_{\theta_{BR}}$. Then every limit point $\Lambda_{\theta_{BR}^{\ast}}$ of $(\Lambda_{\theta_{BR}^{n}})$ denotes the curve on which the boundary condition to the interior Dirichlet problem \eqref{eq:interior_bvp} is satisfied, that is,
\begin{equation}\label{eq:convergence_res_lambda}
\begin{aligned}
v^{i}(x)+\Phi(x,z)=0,\ \ x\in\Lambda_{\theta_{BR}^{\ast}}.
\end{aligned}
\end{equation}
\end{thm}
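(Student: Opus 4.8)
The plan is to follow the convergence analysis of the classical Colton--Monk decomposition method (cf. \cite{DRIA}), now organized around the two preceding lemmas. The first assertion is the cheapest: since every $\Lambda_{\theta_{BR}^{n}}$ lies in the compact admissible class $U$, a convergent subsequence exists immediately, exactly as in Lemma~\ref{lem:opt_lambda}. Passing to this subsequence, I may assume $\Lambda_{\theta_{BR}^{n}}\to\Lambda_{\theta_{BR}^{\ast}}$ and work with a single limit point, so the whole task reduces to identifying the field on $\Lambda_{\theta_{BR}^{\ast}}$.

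First I would invoke Lemma~\ref{lem:m_alpha_convergence}: as $(\alpha_n)$ is null, $m(\alpha_n)\to 0$, and since each $\Lambda_{\theta_{BR}^{n}}$ is optimal in the sense of Definition~\ref{defn:admissible_set}, the associated optimal kernel $g_{\theta_{HK}^{n}}$ and completed data $u_{\theta_{DC}^{n}}^{\infty}$ realize $\mathcal{J}_{DDM}=m(\alpha_n)\to 0$. Every summand in \eqref{eq:DDM_functional_theta} is nonnegative, so each vanishes in the limit; in particular $\|F_{\theta_{DC}^{n}}g_{\theta_{HK}^{n}}-\Phi^{\infty}(\hat{x},z)\|_{L^{2}(S)}\to 0$ and $\|Hg_{\theta_{HK}^{n}}+\Phi(x,z)\|_{L^{2}(\Lambda_{\theta_{BR}^{n}})}\to 0$. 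Using $\|F-F_{\theta_{DC}^{n}}\|\to 0$ from \eqref{eq:limit_far_farop_for_convergence} (and a uniform bound on $\|g_{\theta_{HK}^{n}}\|_{L^{2}(S)}$) I would replace the learned operator $F_{\theta_{DC}^{n}}$ by the true far-field operator $F$, so that $\|Fg_{\theta_{HK}^{n}}-\Phi^{\infty}(\hat{x},z)\|_{L^{2}(S)}\to 0$ as well.

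The heart of the argument is to turn these two limits into a statement about the Herglotz wave functions $v_{n}$ with kernel $g_{\theta_{HK}^{n}}$. Once the boundary data $Hg_{\theta_{HK}^{n}}$ are shown to converge weakly on $\Gamma_{\theta_{BR}^{n}}$, Theorem~5.27 in \cite{DRIA} --- the same tool used in Lemma~\ref{lem:opt_lambda} --- upgrades this to uniform convergence $v_{n}\to v_{\ast}$ on compact subsets of the interior of $\Gamma_{\theta_{BR}^{n}}$, where $v_{\ast}$ solves the Helmholtz equation. To identify $v_{\ast}$ with $v^{i}$, I would take traces: uniform convergence gives $v_{n}|_{\partial D}\to v_{\ast}|_{\partial D}$, whence $A\bigl(v_{\ast}|_{\partial D}+\Phi(\cdot,z)|_{\partial D}\bigr)=\lim A\bigl(v_{n}|_{\partial D}+\Phi(\cdot,z)|_{\partial D}\bigr)=\lim\bigl(\Phi^{\infty}-Fg_{\theta_{HK}^{n}}\bigr)=0$; since $A$ is injective this forces $v_{\ast}|_{\partial D}=-\Phi(\cdot,z)|_{\partial D}=v^{i}|_{\partial D}$. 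Because $k^{2}$ is not a Dirichlet eigenvalue, the interior problem \eqref{eq:interior_bvp} is uniquely solvable, so $v_{\ast}=v^{i}$ in $D$, and unique continuation together with the extension hypothesis propagates this identity into the whole interior of $\Gamma_{\theta_{BR}}$. Finally, exactly as in \eqref{eq:limit_Hg}, the locally uniform convergence $v_{n}\to v^{i}$ combined with $\Lambda_{\theta_{BR}^{n}}\to\Lambda_{\theta_{BR}^{\ast}}$ lets me pass to the limit in $\|Hg_{\theta_{HK}^{n}}+\Phi(x,z)\|_{L^{2}(\Lambda_{\theta_{BR}^{n}})}\to 0$ to obtain $\|v^{i}+\Phi(\cdot,z)\|_{L^{2}(\Lambda_{\theta_{BR}^{\ast}})}=0$, and continuity of $v^{i}+\Phi(\cdot,z)$ yields \eqref{eq:convergence_res_lambda}.

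I expect the genuine obstacle to be the very step $\alpha_n\to 0$. Unlike the fixed-$\alpha$ situation of Lemma~\ref{lem:opt_lambda}, the regularization term now gives only $\alpha_n\|Hg_{\theta_{HK}^{n}}\|_{L^{2}(\Gamma_{\theta_{BR}^{n}})}^{2}\to 0$, which yields no bound on $\|Hg_{\theta_{HK}^{n}}\|_{L^{2}(\Gamma_{\theta_{BR}^{n}})}$ and hence no automatic weak compactness of the Herglotz boundary data that drives Theorem~5.27. Securing this weak convergence (along a further subsequence, and crucially using the extension hypothesis, which is exactly what makes $v^{i}$ available on $\Lambda_{\theta_{BR}^{\ast}}$ and couples the outer curve $\Gamma_{\theta_{BR}}$ to the interior solution) is the delicate part; by contrast, the identification of the weak limit through injectivity of $A$ and the interior uniqueness, and the final transfer to $\Lambda_{\theta_{BR}^{\ast}}$, are then routine. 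A secondary technical point is to thread the two data-completion errors $\varepsilon_{1},\varepsilon_{2}$ of \eqref{eq:limit_far_farop_for_convergence} through the bounded operator $A$ and to secure the uniform control of $\|g_{\theta_{HK}^{n}}\|_{L^{2}(S)}$ needed when replacing $F_{\theta_{DC}^{n}}$ by $F$.
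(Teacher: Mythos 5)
Your skeleton matches the paper's proof almost step for step (compactness of $U$, Theorem 5.27 of \cite{DRIA}, injectivity of $A$, nonresonance to identify $v^{\ast}=v^{i}$, and the final transfer of $\|Hg_{\theta_{HK}^{n}}+\Phi(\cdot,z)\|_{L^{2}(\Lambda_{\theta_{BR}^{n}})}\to 0$ to the limit curve), but the step you explicitly set aside as ``the delicate part'' is exactly the content of the paper's proof, so as written there is a genuine gap. The missing idea is quantitative: Lemma \ref{lem:m_alpha_convergence} only gives $m(\alpha_{n})\to 0$, which, as you correctly observe, yields no bound on $\|Hg_{\theta_{HK}^{n}}\|_{L^{2}(\Gamma_{\theta_{BR}^{n}})}$ once $\alpha_{n}\to 0$. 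What the paper proves first is the linear rate $m(\alpha)\leq\alpha\|v^{i}\|^{2}_{L^{2}(\Gamma_{\theta_{BR}^{j}})}$, via an explicit competitor: the extension hypothesis makes $v^{i}$ a Helmholtz solution with continuous boundary values on $\Gamma_{\theta_{BR}}$, so Theorem 5.22 of \cite{DRIA} supplies kernels $g_{\theta_{HK}^{j}}$ with $\|Hg_{\theta_{HK}^{j}}-v^{i}\|_{L^{2}(\Gamma_{\theta_{BR}^{j}})}\to 0$; inserting the triple $(u^{\infty}_{\theta_{DC}^{j}},g_{\theta_{HK}^{j}},\partial D)$ into $\mathcal{J}_{DDM}$ (admissible since $\partial D\in U$), splitting the penalty as $\alpha\|Hg_{\theta_{HK}^{j}}-v^{i}\|^{2}+\alpha\|v^{i}\|^{2}$, and using \eqref{eq:limit_far_farop_for_convergence} to kill the data-completion and operator-mismatch terms gives \eqref{eq:m_alpha_leg_vi} in the limit $j\to\infty$. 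With that rate, the $\alpha_{n}^{2}$-near-minimality \eqref{eq:thm1_use} can be divided by $\alpha_{n}$ to obtain $\|Hg_{\theta_{HK}^{n}}\|^{2}_{L^{2}(\Gamma_{\theta_{BR}^{n}})}\leq\|v^{i}\|^{2}_{L^{2}(\Gamma_{\theta_{BR}^{n}})}+\alpha_{n}$, which is precisely the uniform bound restoring the weak compactness that drives Theorem 5.27. Note that the slack $\alpha_{n}^{2}$ is chosen deliberately so that it survives division by $\alpha_{n}$; your assumption that the optimal kernel ``realizes'' $\mathcal{J}_{DDM}=m(\alpha_{n})$ is not justified by Definition \ref{defn:admissible_set}, which only asserts an infimum over $g_{\theta_{HK}}$, though this wrinkle is easily repaired by near-minimizers.

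Two secondary remarks. First, your claim that every summand of $\mathcal{J}_{DDM}$ vanishes because the value tends to zero is fine once \eqref{eq:thm1_use} and the rate for $m(\alpha_{n})$ are in place, but it cannot precede them; in your ordering it is circular, since you use it to launch the argument whose key input (weak compactness) you have not yet secured. Second, your step replacing $F_{\theta_{DC}^{n}}$ by $F$ requires a uniform bound on $\|g_{\theta_{HK}^{n}}\|_{L^{2}(S)}$, which you flag but do not establish; such a bound does not follow from the bound on $\|Hg_{\theta_{HK}^{n}}\|_{L^{2}(\Gamma_{\theta_{BR}^{n}})}$ (the ill-posedness of $H$ is the whole point). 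To be fair, the paper's own passage from $\|F_{\theta_{DC}^{n}}g_{\theta_{HK}^{n}}-\Phi^{\infty}\|\to 0$ to $\|A(v_{n}+\Phi(\cdot,z))\|\to 0$ invokes only \eqref{eq:limit_far_farop_for_convergence} and is equally terse on this point, so this is a shared weakness rather than a defect unique to your proposal; the decisive difference remains the absent $m(\alpha)=O(\alpha)$ competitor estimate.
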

\begin{proof}
Because of the compactness of $U$, there exists a convergent subsequence of $(\Lambda_{\theta_{BR}^{n}})$. Denote the limit point by $\Lambda_{\theta_{BR}^{\ast}}$. Similarly, assume that $\Lambda_{\theta_{BR}^{n}}\rightarrow \Lambda_{\theta_{BR}^{\ast}},n\rightarrow\infty$. By Theorem 5.22 in \cite{DRIA} and \eqref{eq:v_i}, there exists a sequence $(g_{\theta_{HK}^{j}})$ in $L^{2}(S)$ satisfying
\begin{equation}\label{eq:Hg_vi}
\begin{aligned}
\|Hg_{\theta_{HK}^{j}}-v^{i}\|_{L^{2}(\Gamma_{\theta_{BR}^{j}})}\rightarrow 0,\ \ j\rightarrow\infty.
\end{aligned}
\end{equation}
Then, using Theorem 5.27 in \cite{DRIA}, one can have that the Herglotz wave functions with the kernel $g_{\theta_{HK}^{j}}$ uniformly converge to $v^{i}$ on the compact subsets of the interior of $\Gamma_{\theta_{BR}^{j}}$. Furthermore, in view
of the boundary condition in \eqref{eq:interior_bvp} for $v^{i}$ on $\partial D$, we have
\begin{equation}\label{eq:Hg_Psi}
\begin{aligned}
\|Hg_{\theta_{HK}^{j}}+\Phi(x,z)\|_{L^{2}(\partial D)}\rightarrow 0,\ \ j\rightarrow\infty.
\end{aligned}
\end{equation}
Using the operator $A$, we further have
\begin{equation}\label{eq:Fg_Psi}
\begin{aligned}
\|Fg_{\theta_{HK}^{j}}-\Phi^{\infty}(\hat{x},z)\|_{L^{2}(S)}\rightarrow 0,\ \ j\rightarrow\infty.
\end{aligned}
\end{equation}
\textcolor{black}{By the definition of $m(\alpha)$ we can obtain}
\begin{equation}\label{eq:m_alpha_leg_Jddm_vi}
\textcolor{black}{\begin{aligned}
&m(\alpha)\leq\mathcal{J}_{DC}(u_{\theta_{DC}^{j}}^{\infty}(\hat{x},d))+\|F_{\theta_{DC}^{j}}g_{\theta_{HK}^{j}}-\Phi^{\infty}(\hat{x},z)\|^{2}_{L^{2}(S)}\\
&\quad\quad+\alpha\|Hg_{\theta_{HK}^{j}}\|^{2}_{L^{2}(\Gamma_{\theta_{BR}^{j}})}+\|Hg_{\theta_{HK}^{j}}+\Phi(x,z)\|^{2}_{L^{2}(\partial D)}\\
&\quad\quad\leq\mathcal{J}_{DC}(u_{\theta_{DC}^{j}}^{\infty}(\hat{x},d))+\|F_{\theta_{DC}^{j}}g_{\theta_{HK}^{j}}-\Phi^{\infty}(\hat{x},z)\|^{2}_{L^{2}(S)}\\
&\quad\quad+\alpha\|Hg_{\theta_{HK}^{j}}-v^{i}\|^{2}_{L^{2}(\Gamma_{\theta_{BR}^{j}})}+\alpha\|v^{i}\|^{2}_{L^{2}(\Gamma_{\theta_{BR}^{j}})}+\|Hg_{\theta_{HK}^{j}}+\Phi(x,z)\|^{2}_{L^{2}(\partial D)}.
\end{aligned}}
\end{equation}
Then for $j\rightarrow\infty$ and all $\alpha\in \mathbb{R}_{+}$, combining \eqref{eq:limit_far_farop_for_convergence} and \eqref{eq:Hg_vi}-\eqref{eq:m_alpha_leg_Jddm_vi} together gives rise to
\begin{equation}\label{eq:m_alpha_leg_vi}
\begin{aligned}
m(\alpha)\leq \alpha\|v^{i}\|^{2}_{L^{2}(\Gamma_{\theta_{BR}^{j}})},
\end{aligned}
\end{equation}
In addition, by Lemma \ref{lem:opt_lambda}, for each $n$ there exist $u_{\theta_{DC}^{n}}^{\infty}(\hat{x},d)\in L^{2}(S\times S)$ and $g_{\theta_{HK}^{n}}\in L^{2}(S)$ satisfying
\begin{equation}\label{eq:thm1_use}
\begin{aligned}
\mathcal{J}_{DDM}(u_{\theta_{DC}^{n}}^{\infty}(\hat{x},d), g_{\theta_{HK}^{n}}, \Lambda_{\theta_{BR}^{n}};\alpha_{n}) \leq m(\alpha_{n}) + \alpha_{n}^{2}.
\end{aligned}
\end{equation}
By \eqref{eq:m_alpha_leg_vi}, \eqref{eq:thm1_use} and $\alpha_{n}\|Hg_{\theta_{HK}^{n}}\|^{2}_{L^{2}(\Gamma_{\theta_{BR}^{n}})}\leq\mathcal{J}_{DDM}(u_{\theta_{DC}^{n}}^{\infty}(\hat{x},d), g_{\theta_{HK}^{n}}, \Lambda_{\theta_{BR}^{n}};\alpha_{n})$, for all $n$ we have that
\begin{equation}
\begin{aligned}
\|Hg_{\theta_{HK}^{n}}\|^{2}_{L^{2}(\Gamma_{\theta_{BR}^{n}})} \leq \|v^{i}\|^{2}_{L^{2}(\Gamma_{\theta_{BR}^{n}})} + \alpha_{n},
\nonumber
\end{aligned}
\end{equation}
which implies that the sequence $Hg_{\theta_{HK}^{n}}$ is weakly convergent in $L^{2}(\Gamma_{\theta_{BR}^{n}})$. Then, by Theorem 5.27 in \cite{DRIA}, the Herglotz wave functions $v_{n}$ with kernels $g_{\theta_{HK}^{n}}$ converge uniformly to the solution $v^{\ast}$ of the Helmholtz equation on compact subsets of the interior of $\Gamma_{\theta_{BR}^{n}}$. Moreover, by Lemma \ref{lem:m_alpha_convergence}, we obtain $m(\alpha_{n})\rightarrow 0,n\rightarrow\infty$. From \eqref{eq:thm1_use}, we also have
\begin{equation}
\begin{aligned}
\|F_{\theta_{DC}^{n}}g_{\theta_{HK}^{n}}-\Phi^{\infty}(\hat{x},z)\|^{2}_{L^{2}(S)}\leq m(\alpha_{n}) + \alpha_{n}^{2}.
\nonumber
\end{aligned}
\end{equation}
Therefore, there exists $v_{n}$ which satisfies $A(v_{n}+\Phi(x,z))=Fg_{\theta_{HK}^{n}}-\Phi^{\infty}(\hat{x},z)$ for the boundary $\partial D$, then
\begin{equation}
\begin{aligned}
&\|A(v_{n}+\Phi(x,z))\|_{L^{2}(S)}
=\|Fg_{\theta_{HK}^{n}}-\Phi^{\infty}(\hat{x},z)\|^{2}_{L^{2}(S)}\\
\leq &m(\alpha_{n}) + \alpha_{n}^{2}+ \|Fg_{\theta_{HK}^{n}}-F_{\theta_{DC}^{n}}g_{\theta_{HK}^{n}}\|^{2}_{L^{2}(S)}.
\nonumber
\end{aligned}
\end{equation}
By \eqref{eq:limit_far_farop_for_convergence}, for $n\rightarrow\infty$, we have
\begin{equation}
\begin{aligned}
\|A(v_{n}+\Phi(x,z))\|_{L^{2}(S)}\rightarrow 0,
\nonumber
\end{aligned}
\end{equation}
and
\begin{equation}
\begin{aligned}
A(v^{\ast}+\Phi(x,z))=0.
\nonumber
\end{aligned}
\end{equation}
The fact that the operator $A$ is injective yields
\begin{equation}\label{eq:v_star_bc}
\begin{aligned}
v^{\ast}+\Phi(x,z)=0, \ \ \mathrm{on}\ \partial D.
\end{aligned}
\end{equation}
From \eqref{eq:interior_bvp} and \eqref{eq:v_star_bc}, we find that $v^{\ast}$ and $v^{i}$ have the same boundary condition on $\partial D$, implying that they possess the same Herglotz kernel. Therefore, one can further have $v^{i}=v^{\ast}$ since $k^2$ is assumed not to be the Dirichlet eigenvalue for the obstacle $D$. From \eqref{eq:thm1_use}, we obtain
\begin{equation}\label{eq:bc_convergence}
\begin{aligned}
\|v_{n}+\Phi(x,z)\|^{2}_{L^{2}(\Lambda_{\theta_{BR}^{n}})}\leq m(\alpha_{n}) + \alpha_{n}^{2}\rightarrow 0, \ n\rightarrow \infty,
\end{aligned}
\end{equation}
which then gives rise to \eqref{eq:convergence_res_lambda}. This completes the proof.
\end{proof}

Under additional assumptions, we can further get the convergence towards the exact boundary of the obstacle.
\begin{thm}\label{thm:convergence_result_tend_exact_boundary}
Assume that $D$ is contained in a circle $C_{r_{b}}$ of radius $r_{b}$ such that $0<k<\frac{k_{01}}{r_{b}}$, where $k_{01}$ means the first zero of the Bessel function $J_{0}$. Then the sequence $\{\Lambda_{\theta_{BR}^{n}}\}$ only has one limit point $\Lambda_{\theta_{BR}^{\ast}}$, which coincides with $\partial {D}$.
\end{thm}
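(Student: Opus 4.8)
The plan is to reduce the statement to a uniqueness property for the boundary condition \eqref{eq:convergence_res_lambda}: under the smallness condition on $k$, the only admissible curve in $U$ on which the interior Dirichlet boundary condition can be satisfied is $\partial D$ itself. Once this is shown, the conclusion is immediate, since Theorem \ref{thm:convergence_result} already guarantees (by compactness of $U$) that $\{\Lambda_{\theta_{BR}^{n}}\}$ has at least one limit point $\Lambda_{\theta_{BR}^{\ast}}$ and that every such limit point satisfies $v^{i}(x)+\Phi(x,z)=0$ on $\Lambda_{\theta_{BR}^{\ast}}$; if all limit points must equal $\partial D$, the limit point is unique.

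The starting object is the function $w:=v^{i}+\Phi(\cdot,z)$. By the boundary condition in \eqref{eq:interior_bvp} we have $w=0$ on $\partial D$, and by \eqref{eq:convergence_res_lambda} we have $w=0$ on any limit point $\Lambda_{\theta_{BR}^{\ast}}$. Moreover $w$ solves the Helmholtz equation wherever $v^{i}$ is a regular solution and $\Phi(\cdot,z)$ is regular, that is, away from the source point $z$: it is smooth in $D\setminus\{z\}$, and, invoking the extension hypothesis of Theorem \ref{thm:convergence_result} that $v^{i}$ continues as a Helmholtz solution across $\partial D$ into the interior of $\Gamma_{\theta_{BR}}$, it is also a regular solution in the interior of $\Lambda_{\theta_{BR}^{\ast}}$ punctured at $z$. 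The essential point is that the only singularity of $w$ inside these domains is the nonremovable singularity of $\Phi(\cdot,z)$ at $z\in D$.

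Next I would argue by contradiction. Suppose $\Lambda_{\theta_{BR}^{\ast}}\neq\partial D$, and let $\Omega^{\ast}$ denote the domain enclosed by $\Lambda_{\theta_{BR}^{\ast}}$. Since $z$ lies in the interior of both $\partial D$ and $\Lambda_{\theta_{BR}^{\ast}}$, any connected component $\Omega$ of the symmetric difference of $D$ and $\Omega^{\ast}$ is nonempty and does not contain $z$; hence $w$ is a genuine Helmholtz solution in $\Omega$, and $w=0$ on $\partial\Omega$ because $\partial\Omega$ is composed of arcs of $\partial D$ and of $\Lambda_{\theta_{BR}^{\ast}}$, on each of which $w$ vanishes. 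This is precisely where the assumption $0<k<k_{01}/r_{b}$ is used: the first Dirichlet eigenvalue of the disk $C_{r_{b}}$ equals $(k_{01}/r_{b})^{2}$, so by domain monotonicity of Dirichlet eigenvalues any subdomain $\Omega\subset C_{r_{b}}$ has first eigenvalue at least $(k_{01}/r_{b})^{2}>k^{2}$. Therefore $k^{2}$ is not a Dirichlet eigenvalue of $\Omega$, and the homogeneous Dirichlet problem forces $w\equiv 0$ in $\Omega$.

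Finally, because solutions of the Helmholtz equation are real-analytic, unique continuation propagates $w\equiv 0$ from $\Omega$ throughout the connected region where $w$ is a regular solution, up to a punctured neighbourhood of $z$. This contradicts the nonremovable singularity that $w$ inherits from $\Phi(\cdot,z)$ at $z$. The contradiction excludes $\Lambda_{\theta_{BR}^{\ast}}\neq\partial D$, so every limit point coincides with $\partial D$ and the limit point is unique. I expect the main obstacle to be the careful bookkeeping of the geometry, namely verifying that the component $\Omega$ is nonempty, that $z$ always lies outside it, and that $w$ is a legitimate Helmholtz solution on $\Omega$ regardless of whether $\Omega$ sits inside $D$ or inside $\Omega^{\ast}$; this last point is exactly where the extension of $v^{i}$ across $\partial D$ into the interior of $\Gamma_{\theta_{BR}}$ is indispensable, whereas the eigenvalue estimate itself is routine.
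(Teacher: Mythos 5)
Your proposal is correct and takes essentially the same route as the paper's proof: both hinge on the function $w=v^{i}+\Phi(\cdot,z)$ vanishing on the boundary of a component of the symmetric difference between $D$ and the region enclosed by a putative distinct limit curve, and on ruling out $k^{2}$ as a Dirichlet eigenvalue of that component by domain monotonicity against the disk $C_{r_{b}}$ together with the hypothesis $0<k<k_{01}/r_{b}$. If anything, your explicit unique-continuation step excluding the degenerate case $w\equiv 0$ (via the nonremovable singularity of $\Phi(\cdot,z)$ at $z$) is more careful than the paper, which simply asserts that $w$ is a nontrivial eigenfunction, and the geometric bookkeeping you flag is likewise glossed over in the paper's ``without loss of generality'' placement of $D_{12}\subset C_{r_{b}}\setminus\overline{D}$.
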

\begin{proof}
Suppose that $\Lambda_{\theta_{BR}^{(1)}},\Lambda_{\theta_{BR}^{(2)}}\in U$ are distinct limit points of $\{\Lambda_{\theta_{BR}^{n}}\}$, which are respectively related to two obstacles $D_{1}, D_{2}$. Without loss of generality, let the domain $D_{12}:=D_{1}\backslash(\overline{D}_{1}\cap \overline{D}_{2})$ be non-empty and $D_{12}\subset C_{r_{b}}\backslash \overline{D}$. By the Theorem \ref{thm:convergence_result}, $v^{i}(x)+\Phi(x,z)$ is an eigenfunction of the negative Laplacian in $D_{12}$ with the Dirichlet eigenvalue $k^{2}$. For $C_{r_{b}}$, the smallest positive eigenvalue is $\frac{k_{01}^{2}}{r_{b}^{2}}$. Denote $k^{2}_{D_{12}}$ the first positive eigenvalue of the negative Laplacian in $D_{12}$. Since $D_{12}\subset C_{r_{b}}$, we can get $\frac{k_{01}^{2}}{r_{b}^{2}}<k^{2}_{D_{12}}$. By the assumption $0<k<\frac{k_{01}}{r_{b}}$, we further have $k<k_{D_{12}}$, this contradicts the monotonicity of the Dirichlet eigenvalues. Therefore, if $0<k<\frac{k_{01}}{r_{b}}$, the sequence $\{\Lambda_{\theta_{BR}^{n}}\}$ only has one limit point $\Lambda_{\theta_{BR}^{\ast}}$. Since $D$ is contained in a circle $C_{r_{b}}$ and $v^{i}(x)+\Phi(x,z)=0$ is also satisfied on the boundary $\partial D$, then the only one limit point is $\Lambda_{\theta_{BR}^{\ast}}=\partial D$.
\end{proof}

\begin{rem}\label{rem_ddm_convergcen_conditions}
In fact, the subsequent numerical experiments also verify the convergence of DDM. Furthermore, although we restrict ourselves to the limited aperture case, clearly, the methodology of DDM can be applied to the full aperture case in which DCnet, the condition \eqref{eq:limit_far_farop} and the condition \eqref{eq:limit_far_farop_for_convergence} are not required. In addition, in the full aperture case, higher accuracy can be achieved because we do not need to deal with  the approximation error generated by \eqref{eq:DC_functional}.
\end{rem}

\subsection{The discretization of DDM}
This section presents the discretization of DDM for the subsequent numerical computation. To this end, we first give the discretization of the exact full aperture data $u^{\infty}(\hat{x},d)|_{S \times S}$, which is regarded as multi-static response matrix (MSRM) $\mathbb{F}_{f}$. Taking $\tau_{i}:=\frac{(i-1)\pi}{m},i=1,2,\cdots,2m$, we can determine the direction of the incident and observation by
\begin{equation}
\begin{aligned}
d_{i}:=(\cos \tau_{i}, \sin \tau_{i}),\ i=1,2,\cdots,2m,
\nonumber
\end{aligned}
\end{equation}
and
\begin{equation}
\begin{aligned}
\hat{x}_{j}:=(\cos \tau_{j}, \sin \tau_{j}),\ j=1,2,\cdots,2m.
\nonumber
\end{aligned}
\end{equation}
Thus, the corresponding MSRM is
\begin{equation}\label{eq:MSR}
    \begin{aligned}
    \mathbb{F}_{f}=\begin{bmatrix} u^{\infty}(\hat{x}_{1},d_{1}),u^{\infty}(\hat{x}_{1},d_{2}),\cdots, u^{\infty}(\hat{x}_{1},d_{2m})\\ u^{\infty}(\hat{x}_{2},d_{1}),u^{\infty}(\hat{x}_{2},d_{2}),\cdots, u^{\infty}(\hat{x}_{2},d_{2m})\\ \vdots\ \ \ \ \ \ \ \ \ \ \vdots\ \ \ \ \ \ \ \ \ \ \vdots\ \ \ \ \ \ \ \ \ \ \vdots \\
    u^{\infty}(\hat{x}_{2m},d_{1}),u^{\infty}(\hat{x}_{2m},d_{2}),\cdots, u^{\infty}(\hat{x}_{2m},d_{2m})\end{bmatrix}.
    \end{aligned}
\end{equation}
Denote the limited incident aperture $\gamma^{i}$ and limited observation aperture $\gamma^{o}$ by
\begin{equation}
\begin{aligned}
\gamma^{i}:=(\cos \psi, \sin \psi), \ \psi\subseteq [0,2\pi],
\nonumber
\end{aligned}
\end{equation}
and
\begin{equation}
\begin{aligned}
\gamma^{o}:=(\cos \phi, \sin \phi), \ \phi\subset [0,2\pi].
\nonumber
\end{aligned}
\end{equation}
Similar to MSRM, the discretization of the DDM input, which is the exact limited aperture data $u^{\infty}(\hat{x},d)|_{\gamma^{o} \times \gamma^{i}}$, is defined as
\begin{equation}\label{eq:limited_data_discre}
    \begin{aligned}
    \mathbb{F}_{l}=\begin{bmatrix} u^{\infty}(\hat{x}_{n^{o}},d_{n^{i}}),u^{\infty}(\hat{x}_{n^{o}},d_{n^{i}+1}),\cdots, u^{\infty}(\hat{x}_{n^{o}},d_{N^{i}})\\ u^{\infty}(\hat{x}_{n^{o}+1},d_{n^{i}}),u^{\infty}(\hat{x}_{n^{o}+1},d_{n^{i}+1}),\cdots, u^{\infty}(\hat{x}_{n^{o}+1},d_{N^{i}})\\ \ \ \ \ \vdots\ \ \ \ \ \ \ \ \ \ \ \ \ \ \vdots\ \ \ \ \ \ \ \ \ \ \ \ \ \ \ \ \vdots\ \ \ \ \ \ \ \ \ \ \ \ \ \ \vdots \\
    u^{\infty}(\hat{x}_{N^{o}},d_{n^{i}}),u^{\infty}(\hat{x}_{N^{o}},d_{n^{i}+1}),\cdots, u^{\infty}(\hat{x}_{N^{o}},d_{N^{i}})\end{bmatrix},
    \end{aligned}
\end{equation}
with the same step size $\frac{\pi}{m}$. For example, if $\phi=[0,\pi]$ and $\psi=[0,2\pi]$, then $n^{o}=1$, $N^{o}=m$, $n^{i}=1$ and $N^{i}=2m$. If $\phi=[0,\pi/2]$ and $\psi=[\pi/2,3\pi/2]$, then $n^{o}=1$, $N^{o}=m/2$, $n^{i}=m/2+1$ and $N^{i}=3m/2$. In addition, the discretization of the recovered full aperture data $u_{\theta_{DC}}^{\infty}(\hat{x},d)|_{S \times S}$ by DCnet is given by
\begin{equation}\label{eq:MSR_recovered}
    \begin{aligned}
    \mathbb{F}_{\theta_{DC},f}=\begin{bmatrix} u_{\theta_{DC}}^{\infty}(\hat{x}_{1},d_{1}),u_{\theta_{DC}}^{\infty}(\hat{x}_{1},d_{2}),\cdots, u_{\theta_{DC}}^{\infty}(\hat{x}_{1},d_{2m})\\ u_{\theta_{DC}}^{\infty}(\hat{x}_{2},d_{1}),u_{\theta_{DC}}^{\infty}(\hat{x}_{2},d_{2}),\cdots, u_{\theta_{DC}}^{\infty}(\hat{x}_{2},d_{2m})\\ \vdots\ \ \ \ \ \ \ \ \ \ \vdots\ \ \ \ \ \ \ \ \ \ \ \ \ \ \vdots\ \ \ \ \ \ \ \ \ \ \ \ \vdots \\
    u_{\theta_{DC}}^{\infty}(\hat{x}_{2m},d_{1}),u_{\theta_{DC}}^{\infty}(\hat{x}_{2m},d_{2}),\cdots, u_{\theta_{DC}}^{\infty}(\hat{x}_{2m},d_{2m})\end{bmatrix}.
    \end{aligned}
\end{equation}
Therefore, using \eqref{eq:MSR} and \eqref{eq:MSR_recovered} for discretizing the functional \eqref{eq:DC_functional} derives
\begin{equation}\label{eq:DC_functional_discre}
\widetilde{\mathcal{J}}_{DC}(\mathbb{F}_{\theta_{DC},f}):= (\frac{\pi}{m})^2 \sum\limits_{i=1}^{2m}\sum\limits_{j=1}^{2m}|\mathbb{F}_{f}^{(ji)}-\mathbb{F}_{\theta_{DC},f}^{(ji)}|^{2},
\end{equation}
where
\begin{equation}
\begin{aligned}
\mathbb{F}_{f}^{(ji)}=u^{\infty}(\hat{x}_{j},d_{i}),\ \mathbb{F}_{\theta_{DC},f}^{(ji)}=u_{\theta_{DC}}^{\infty}(\hat{x}_{j},d_{i}).
\nonumber
\end{aligned}
\end{equation}
Next, we shall discretize the recovered Herglotz kernel $g_{\theta_{HK}}(d)$ and recovered curve $\Lambda_{\theta_{BR}}$. Discretizing $g_{\theta_{HK}}(d)$ yields
\begin{equation}\label{eq:g_recovered_discre}
    \begin{aligned}
    \widetilde{g}_{\theta_{HK}}=\begin{bmatrix} g_{\theta_{HK}}(d_{1})\\ g_{\theta_{HK}}(d_{2})\\ \vdots\ \\
    g_{\theta_{HK}}(d_{2m})\end{bmatrix}.
    \end{aligned}
\end{equation}
To work with $\Lambda_{\theta_{BR}}$ numerically, we assume $\Lambda_{\theta_{BR}}$ is a starlike curve with respect to the origin, which is defined by
\begin{equation}\label{eq:lambda_star}
    \begin{aligned}
    \Lambda_{\theta_{BR}} :=\mathrm{e}^{q_{\theta_{BR}}(t)}(\mathrm{cos}t,\mathrm{sin}t),\ t \in\ [0,2\pi],
    \end{aligned}
\end{equation}
where $q_{\theta_{BR}}(t)$ is in the form of truncated fourier expansion
\begin{equation}\label{eq:q}
    \begin{aligned}
    q_{\theta_{BR}}(t)=\frac{q_{0}}{\sqrt{2\pi}}+\sum\limits_{n=1}^{N_{\Lambda}}\left(\frac{a_{n}}{n^{s}}\frac{\mathrm{cos}(nt)}{\sqrt{\pi}}+\frac{b_{n}}{n^{s}}\frac{\mathrm{sin}(nt)}{\sqrt{\pi}}\right),
    \end{aligned}
\end{equation}
and $N_{\Lambda}$ is the cut-off frequency, $s$ controls the decreasing rate of the corresponding Fourier coefficients. Instead of directly expanding $\mathrm{e}^{q_{\theta_{BR}}(t)}$, the expansion form \eqref{eq:q} is conducive to ensuring the back propagation of DDM. Note that $\Lambda_{\theta_{BR}}$ is uniquely determined by a finite set
\begin{equation}\label{eq:Q_set}
    \begin{aligned}
    Q_{\theta_{BR}}=(q_{0},a_{1},b_{1},a_{2},b_{2},\cdots,a_{N_{\Lambda}},b_{N_{\Lambda}})\in\mathbb{R}^{2N_{\Lambda}+1},
    \end{aligned}
\end{equation}
which implies that determining $\Lambda_{\theta_{BR}}$ is equivalent to determining $Q_{\theta_{BR}}$. Let $t_{l}:=\frac{2(l-1)\pi}{N_{t}},l=1,2,\cdots,N_{t}$, where $N_{t}$ is the number of surface discretization. Then, each boundary point $x^{(l)}_{\Lambda_{\theta_{BR}}}\in \Lambda_{\theta_{BR}}$ is defined by
\begin{equation}\label{eq:x_lambda_discre}
    \begin{aligned}
    x^{(l)}_{\Lambda_{\theta_{BR}}}:=\mathrm{e}^{q_{\theta_{BR}}(t_{l})}(\mathrm{cos}t_{l},\mathrm{sin}t_{l}),\ l=1,2,\cdots,N_{t}.
    \end{aligned}
\end{equation}
Moreover, throughout the paper, to meet the above theoretical requirement, we set $\Gamma_{\theta_{BR}}=1.001\Lambda_{\theta_{BR}}$ to ensure that $\Lambda_{\theta_{BR}}$ is contained in the interior of $\Gamma_{\theta_{BR}}$. Clearly, each point $x^{(l)}_{\Gamma_{\theta_{BR}}}\in \Gamma_{\theta_{BR}}$ is $x^{(l)}_{\Gamma_{\theta_{BR}}}=1.001 x^{(l)}_{\Lambda_{\theta_{BR}}}$. Therefore, discretizing the functional \eqref{eq:CM_functional_theta} derives
\begin{equation}\label{eq:CM_functional_theta_discre}
\begin{aligned}
&\widetilde{\mathcal{J}}_{phy}(\widetilde{g}_{\theta_{HK}}, Q_{\theta_{BR}};\alpha)\\
:=&\frac{\pi}{m}\sum\limits_{j=1}^{2m}\left|\frac{\pi}{m}\sum\limits_{i=1}^{2m}\mathbb{F}_{\theta_{DC},f}^{(ji)}\widetilde{g}_{\theta_{HK}}^{(i)}-\Phi^{\infty}(\hat{x}_{j},z)\right|^{2}\\
+&\alpha\frac{2\pi}{N_{t}}\sum\limits_{l=1}^{N_{t}}\left|\frac{\pi}{m}\sum\limits_{i=1}^{2m}\mathrm{e}^{\mathrm{i}kx^{(l)}_{\Gamma_{\theta_{BR}}}\cdot d_{i}}\widetilde{g}_{\theta_{HK}}^{(i)}\right|^{2}\\
+&\textcolor{black}{\gamma}\frac{2\pi}{N_{t}}\sum\limits_{l=1}^{N_{t}}\left|\frac{\pi}{m}\sum\limits_{i=1}^{2m}\mathrm{e}^{\mathrm{i}kx^{(l)}_{\Lambda_{\theta_{BR}}}\cdot d_{i}}\widetilde{g}_{\theta_{HK}}^{(i)}+\Phi(x^{(l)}_{\Lambda_{\theta_{BR}}},z)\right|^{2},
\end{aligned}
\end{equation}
where $\widetilde{g}_{\theta_{HK}}^{(i)}=g_{\theta_{HK}}(d_{i})$. We would like to point out that, the Jacobian terms $\mathrm{e}^{q_{\theta_{BR}}(t_{l})}\sqrt{1+(q_{\theta_{BR}}^{'}(t_{l}))^{2}}$ and $1.001\mathrm{e}^{q_{\theta_{BR}}(t_{l})}\sqrt{1+(q_{\theta_{BR}}^{'}(t_{l}))^{2}}$ are omitted in \eqref{eq:CM_functional_theta_discre} for simplicity, which will not destroy the solution of the inverse problem. Finally, to learn $\theta_{DC},\theta_{HK},\theta_{BR}$ in DDM, we focus on minimizing
\begin{equation}\label{eq:DDM_functional_theta_discre}
\widetilde{\mathcal{J}}_{DDM}(\mathbb{F}_{\theta_{DC},f},\widetilde{g}_{\theta_{HK}}, x^{(l)}_{\Lambda_{\theta_{BR}}};\alpha)=\widetilde{\mathcal{J}}_{phy}(\widetilde{g}_{\theta_{HK}}, Q_{\theta_{BR}};\alpha)+\beta_{DC}\widetilde{\mathcal{J}}_{DC}(\mathbb{F}_{\theta_{DC},f}).
\end{equation}

The aforementioned discussions are based on the noise-free case. \textcolor{black}{Next, similar to \cite{NB1}, we show that adding the relatively small noise to the input $\mathbb{F}_{l}$ will promote the smoothness of the learned inverse map and introduce regularization terms.} Since $\mathbb{F}_{\theta_{DC},f}$, $\widetilde{g}_{\theta_{HK}}$ and $\Lambda_{\theta_{BR}}$ are functions with respect to $\mathbb{F}_{l}$, we write $\widetilde{\mathcal{J}}_{DDM}(\mathbb{F}_{l})$ to specify the dependence of DDM on the input $\mathbb{F}_{l}$. Let $\mathbb{E}[\cdot]$ signify the expectation. To analyze the influence of the noise, we perturb the limited aperture data $\mathbb{F}_{l}$ in the form
\begin{equation}\label{eq:noisy_input_form}
\begin{aligned}
\mathbb{F}_{l,\eta}=\mathbb{F}_{l}+\eta,
\end{aligned}
\end{equation}
where $\mathbb{E}[\eta]=0$ and the elements of $\eta$ are independent of each other. Then the following theorem holds.
\begin{thm}\label{thm:expectation_noisy}
Let $\mathbb{F}_{l,\eta}$ be the noisy input defined in the form \eqref{eq:noisy_input_form}, then
\begin{equation}\label{eq:noisy_input_J_ddm}
\begin{aligned}
\mathbb{E}[\widetilde{\mathcal{J}}_{DDM}(\mathbb{F}_{l,\eta})]=&\widetilde{\mathcal{J}}_{DDM}(\mathbb{F}_{l})+\frac{1}{2}\sum\limits_{j_{1}=n^{o}}^{N^{o}}\sum\limits_{\widetilde{j}_{1}=n^{i}}^{N^{i}}\frac{\partial ^{2}\widetilde{\mathcal{J}}_{DDM}}{\partial \mathbb{F}^{(j_{1}\widetilde{j}_{1})}_{l} \partial \mathbb{F}^{(j_{1}\widetilde{j}_{1})}_{l}}\mathbb{E}[(\eta^{(j_{1}\widetilde{j}_{1})})^{2}]\\
&+\mathbb{E}[o(\|\eta\|^{2}_{F})],
\end{aligned}
\end{equation}
where $\eta^{(j_{1}\widetilde{j}_{1})}$ is the element located at the $j_{1}\mathrm{th}$ row and the $\widetilde{j}_{1}\mathrm{th}$ column of $\eta$, and $\|\cdot\|_{F}$ is the Frobenius norm.
\end{thm}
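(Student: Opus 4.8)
The plan is to treat $\widetilde{\mathcal{J}}_{DDM}$ as a scalar-valued function of the real (and imaginary) entries of the input matrix $\mathbb{F}_{l}$, expand it by a multivariate Taylor formula about the unperturbed input, and then take the expectation over the noise $\eta$. Since $\mathbb{F}_{\theta_{DC},f}$, $\widetilde{g}_{\theta_{HK}}$ and the boundary points $x^{(l)}_{\Lambda_{\theta_{BR}}}$ are, by construction of the three networks DCnet, HKnet and BRnet, smooth functions of $\mathbb{F}_{l}$, the composite loss $\widetilde{\mathcal{J}}_{DDM}$ is twice continuously differentiable in each entry $\mathbb{F}^{(j\widetilde{j})}_{l}$ of the limited-aperture block. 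This regularity is the hypothesis that legitimizes the expansion.

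First I would write, for the perturbed input $\mathbb{F}_{l,\eta}=\mathbb{F}_{l}+\eta$,
\begin{equation}
\begin{aligned}
\widetilde{\mathcal{J}}_{DDM}(\mathbb{F}_{l,\eta})=&\,\widetilde{\mathcal{J}}_{DDM}(\mathbb{F}_{l})+\sum_{j_{1}=n^{o}}^{N^{o}}\sum_{\widetilde{j}_{1}=n^{i}}^{N^{i}}\frac{\partial \widetilde{\mathcal{J}}_{DDM}}{\partial \mathbb{F}^{(j_{1}\widetilde{j}_{1})}_{l}}\eta^{(j_{1}\widetilde{j}_{1})}\\
&+\frac{1}{2}\sum_{j_{1},\widetilde{j}_{1}}\sum_{j_{2},\widetilde{j}_{2}}\frac{\partial^{2}\widetilde{\mathcal{J}}_{DDM}}{\partial \mathbb{F}^{(j_{1}\widetilde{j}_{1})}_{l}\partial \mathbb{F}^{(j_{2}\widetilde{j}_{2})}_{l}}\eta^{(j_{1}\widetilde{j}_{1})}\eta^{(j_{2}\widetilde{j}_{2})}+o(\|\eta\|^{2}_{F}),
\end{aligned}
\end{equation}
where all derivatives are evaluated at $\mathbb{F}_{l}$ and each index pair ranges over the limited-aperture block $n^{o}\le j\le N^{o}$, $n^{i}\le \widetilde{j}\le N^{i}$. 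Applying $\mathbb{E}[\cdot]$ term by term, the linear contribution vanishes because $\mathbb{E}[\eta^{(j_{1}\widetilde{j}_{1})}]=0$.

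For the quadratic contribution I would invoke that the entries of $\eta$ are independent and centered, so that $\mathbb{E}[\eta^{(j_{1}\widetilde{j}_{1})}\eta^{(j_{2}\widetilde{j}_{2})}]=\mathbb{E}[\eta^{(j_{1}\widetilde{j}_{1})}]\,\mathbb{E}[\eta^{(j_{2}\widetilde{j}_{2})}]=0$ whenever $(j_{1},\widetilde{j}_{1})\neq(j_{2},\widetilde{j}_{2})$, while for coinciding indices the expectation collapses to the variance $\mathbb{E}[(\eta^{(j_{1}\widetilde{j}_{1})})^{2}]$. Hence only the diagonal Hessian entries survive the double sum, and these produce precisely the regularization term displayed in \eqref{eq:noisy_input_J_ddm}; collecting them together with $\mathbb{E}[o(\|\eta\|^{2}_{F})]$ establishes the claim.

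The main obstacle I anticipate is controlling the remainder rather than the combinatorics: one must guarantee that the second-order Taylor remainder is genuinely $o(\|\eta\|_{F}^{2})$ and that the interchange of $\mathbb{E}[\cdot]$ with the expansion is valid, which is exactly where the \emph{smoothness} of the three networks as maps of $\mathbb{F}_{l}$ and the \emph{relatively small noise} hypothesis enter. Once independence and zero mean are in hand, the vanishing of the first-order term and of the off-diagonal covariances, and the identification of the surviving diagonal terms with the stated sum, are routine.
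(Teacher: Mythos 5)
Your proposal is correct and follows essentially the same route as the paper's proof: a second-order Taylor expansion of $\widetilde{\mathcal{J}}_{DDM}$ about $\mathbb{F}_{l}$, with the first-order term killed by $\mathbb{E}[\eta]=0$ and the off-diagonal Hessian terms killed by entrywise independence, leaving only the diagonal variance terms. If anything, you are more careful than the paper, which takes the smoothness of the network composition and the interchange of expectation with the expansion for granted, exactly the points you flag as the real obstacles.
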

\begin{proof}
For $\widetilde{\mathcal{J}}_{DDM}(\mathbb{F}_{l,\eta})$, we perform the Taylor expansion around $\mathbb{F}_{l}$ up to second-order to get
\begin{equation}
\begin{aligned}
\widetilde{\mathcal{J}}_{DDM}(\mathbb{F}_{l,\eta})=&\widetilde{\mathcal{J}}_{DDM}(\mathbb{F}_{l})+\sum\limits_{j_{1}=n^{o}}^{N^{o}}\sum\limits_{\widetilde{j}_{1}=n^{i}}^{N^{i}}\frac{\partial \widetilde{\mathcal{J}}_{DDM}}{\partial \mathbb{F}^{(j_{1}\widetilde{j}_{1})}_{l}}\eta^{(j_{1}\widetilde{j}_{1})}\\
&+\frac{1}{2}\sum\limits_{j_{2}=n^{o}}^{N^{o}}\sum\limits_{\widetilde{j}_{2}=n^{i}}^{N^{i}}\sum\limits_{j_{1}=n^{o}}^{N^{o}}\sum\limits_{\widetilde{j}_{1}=n^{i}}^{N^{i}}\frac{\partial ^{2}\widetilde{\mathcal{J}}_{DDM}}{\partial \mathbb{F}^{(j_{2}\widetilde{j}_{2})}_{l} \partial \mathbb{F}^{(j_{1}\widetilde{j}_{1})}_{l}}\eta^{(j_{2}\widetilde{j}_{2})}\eta^{(j_{1}\widetilde{j}_{1})}+o(\|\eta\|^{2}_{F}).
\nonumber
\end{aligned}
\end{equation}
Since $\mathbb{E}[\eta]=0$ and the elements of $\eta$ are independent of each other, then it immediately follows that $\mathbb{E}[\widetilde{\mathcal{J}}_{DDM}(\mathbb{F}_{l,\eta})]$ is in the form \eqref{eq:noisy_input_J_ddm}.
\end{proof}

It is noted in \eqref{eq:noisy_input_J_ddm} that $\mathbb{E}[\widetilde{\mathcal{J}}_{DDM}(\mathbb{F}_{l,\eta})]$ is the sum of the original loss $\widetilde{\mathcal{J}}_{DDM}(\mathbb{F}_{l})$ and the induced regularization terms. These regularization terms deeply rely on $\frac{\partial ^{2}\widetilde{\mathcal{J}}_{DDM}}{\partial \mathbb{F}^{(j_{1}\widetilde{j}_{1})}_{l} \partial \mathbb{F}^{(j_{1}\widetilde{j}_{1})}_{l}}$. Since $\widetilde{\mathcal{J}}_{DC}$ is mainly concerned with the data completion, we only need to focus on $\frac{\partial ^{2}\widetilde{\mathcal{J}}_{phy}}{\partial \mathbb{F}^{(j_{1}\widetilde{j}_{1})}_{l} \partial \mathbb{F}^{(j_{1}\widetilde{j}_{1})}_{l}}$. Here, $\frac{\partial ^{2}\widetilde{\mathcal{J}}_{phy}}{\partial \mathbb{F}^{(j_{1}\widetilde{j}_{1})}_{l} \partial \mathbb{F}^{(j_{1}\widetilde{j}_{1})}_{l}}$ can be written as
\begin{equation}
\begin{aligned}
\frac{\partial ^{2}\widetilde{\mathcal{J}}_{phy}}{\partial \mathbb{F}^{(j_{1}\widetilde{j}_{1})}_{l} \partial \mathbb{F}^{(j_{1}\widetilde{j}_{1})}_{l}}=\mathcal{P}_{1}+\mathcal{P}_{2}+\mathcal{P}_{3},
\nonumber
\end{aligned}
\end{equation}
where
\begin{equation}
\begin{aligned}
&\mathcal{P}_{1}=\frac{2\pi}{m}\sum\limits_{j=1}^{2m}\left(\frac{\pi}{m}\sum\limits_{i=1}^{2m}\frac{\partial \mathbb{F}_{\theta_{DC},f}^{(ji)}\widetilde{g}_{\theta_{HK}}^{(i)}}{\partial \mathbb{F}^{(j_{1}\widetilde{j}_{1})}_{l} }\right)^{2}\\
&\quad\quad +\frac{2\pi}{m}\sum\limits_{j=1}^{2m}\left(\frac{\pi}{m}\sum\limits_{i=1}^{2m}\mathbb{F}_{\theta_{DC},f}^{(ji)}\widetilde{g}_{\theta_{HK}}^{(i)}-\Phi^{\infty}(\hat{x}_{j},z)\right)\left(\frac{\pi}{m}\sum\limits_{i=1}^{2m}\frac{\partial ^{2} \mathbb{F}_{\theta_{DC},f}^{(ji)}\widetilde{g}_{\theta_{HK}}^{(i)}}{\partial \mathbb{F}^{(j_{1}\widetilde{j}_{1})}_{l} \partial \mathbb{F}^{(j_{1}\widetilde{j}_{1})}_{l}}\right),
\nonumber
\end{aligned}
\end{equation}
\begin{equation}
\begin{aligned}
&\mathcal{P}_{2}=\alpha\frac{4\pi}{N_{t}}\sum\limits_{l=1}^{N_{t}}\left(\frac{\pi}{m}\sum\limits_{i=1}^{2m}\frac{\partial \mathrm{e}^{\mathrm{i}kx^{(l)}_{\Gamma_{\theta_{BR}}}\cdot d_{i}}\widetilde{g}_{\theta_{HK}}^{(i)}}{\partial \mathbb{F}^{(j_{1}\widetilde{j}_{1})}_{l}}\right)^{2}\\
&\quad\quad+\alpha\frac{4\pi}{N_{t}}\sum\limits_{l=1}^{N_{t}}\left(\frac{\pi}{m}\sum\limits_{i=1}^{2m}
\mathrm{e}^{\mathrm{i}kx^{(l)}_{\Gamma_{\theta_{BR}}}\cdot d_{i}}\widetilde{g}_{\theta_{HK}}^{(i)}\right)\left(\frac{\pi}{m}\sum\limits_{i=1}^{2m}\frac{\partial^{2} \mathrm{e}^{\mathrm{i}kx^{(l)}_{\Gamma_{\theta_{BR}}}\cdot d_{i}}\widetilde{g}_{\theta_{HK}}^{(i)}}{\partial \mathbb{F}^{(j_{1}\widetilde{j}_{1})}_{l}\partial \mathbb{F}^{(j_{1}\widetilde{j}_{1})}_{l}}\right),
\nonumber
\end{aligned}
\end{equation}
and
\begin{equation}
\begin{aligned}
&\mathcal{P}_{3}=\textcolor{black}{\gamma}\frac{4\pi}{N_{t}}\sum\limits_{l=1}^{N_{t}}\left(\frac{\partial \Phi(x^{(l)}_{\Lambda_{\theta_{BR}}},z)}{\partial \mathbb{F}^{(j_{1}\widetilde{j}_{1})}_{l}}+\frac{\pi}{m}\sum\limits_{i=1}^{2m}\frac{\partial \mathrm{e}^{\mathrm{i}kx^{(l)}_{\Lambda_{\theta_{BR}}}\cdot d_{i}}\widetilde{g}_{\theta_{HK}}^{(i)}}{\partial \mathbb{F}^{(j_{1}\widetilde{j}_{1})}_{l}}\right)^{2}\\
&+\textcolor{black}{\gamma}\frac{4\pi}{N_{t}}\sum\limits_{l=1}^{N_{t}}\left(\Phi(x^{(l)}_{\Lambda_{\theta_{BR}}},z)+\frac{\pi}{m}\sum\limits_{i=1}^{2m}
\mathrm{e}^{\mathrm{i}kx^{(l)}_{\Lambda_{\theta_{BR}}}\cdot d_{i}}\widetilde{g}_{\theta_{HK}}^{(i)}\right)\left(\frac{\partial^{2} \Phi(x^{(l)}_{\Lambda_{\theta_{BR}}},z)}{\partial \mathbb{F}^{(j_{1}\widetilde{j}_{1})}_{l}\partial \mathbb{F}^{(j_{1}\widetilde{j}_{1})}_{l}}+\frac{\pi}{m}\sum\limits_{i=1}^{2m}\frac{\partial^{2} \mathrm{e}^{\mathrm{i}kx^{(l)}_{\Lambda_{\theta_{BR}}}\cdot d_{i}}\widetilde{g}_{\theta_{HK}}^{(i)}}{\partial \mathbb{F}^{(j_{1}\widetilde{j}_{1})}_{l}\partial \mathbb{F}^{(j_{1}\widetilde{j}_{1})}_{l}}\right).
\nonumber
\end{aligned}
\end{equation}
\textcolor{black}{It is observed that $\mathcal{P}_{1}$, $\mathcal{P}_{2}$ and $\mathcal{P}_{3}$ can be regarded as additionally induced regularization terms. In addition, they provide some constraints on the first and second derivatives of $\mathbb{F}_{\theta_{DC},f}^{(ji)}\widetilde{g}_{\theta_{HK}}^{(i)}$, $
\mathrm{e}^{\mathrm{i}kx^{(l)}_{\Gamma_{\theta_{BR}}}\cdot d_{i}}\widetilde{g}_{\theta_{HK}}^{(i)}$ and $
\mathrm{e}^{\mathrm{i}kx^{(l)}_{\Lambda_{\theta_{BR}}}\cdot d_{i}}\widetilde{g}_{\theta_{HK}}^{(i)}$, hence improving the smoothness of the learned $\mathbb{F}_{\theta_{DC},f}$, $\widetilde{g}_{\theta_{HK}}$ and $\Lambda_{\theta_{BR}}$.}

\section{Numerical experiments}
In this section, we will present numerical examples to show the effectiveness of DDM for the limited aperture inverse scattering problem \eqref{eq:limited_inverse_scattering}. \textcolor{black}{All the following numerical examples are carried out on a computing server with a NVIDIA GeForce RTX 3090 GPU.}

\begin{figure}[t]
    \centering
    \subfigure{
    \includegraphics[width=.95\textwidth]{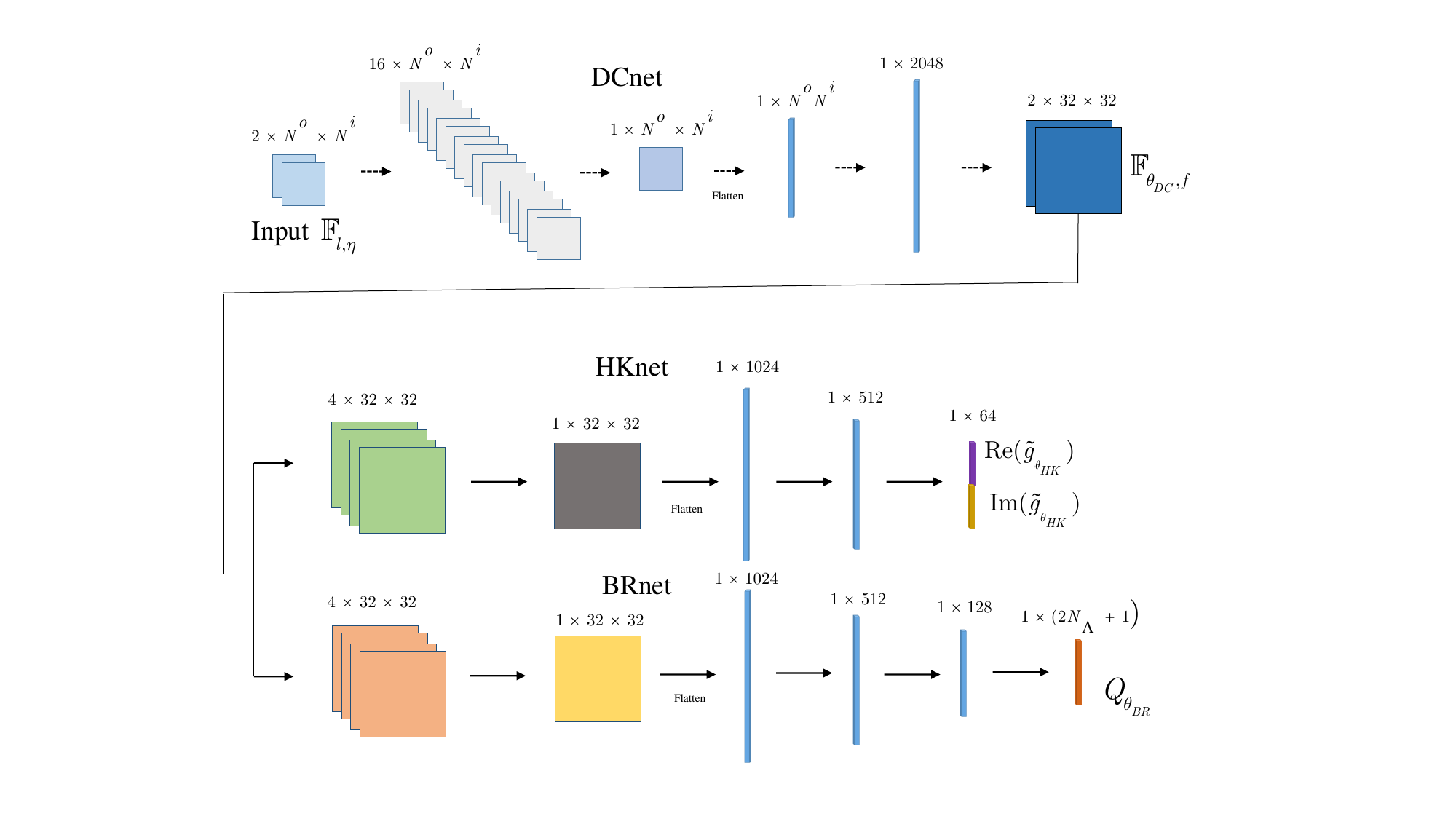}
}
    \caption{\label{fig:DDM_numerical_model}A schematic illustration of the network structure of DDM in all the numerical examples.}
\end{figure}
\subsection{Experiment Setup}
In all the following examples, the noise $\eta$ in \eqref{eq:noisy_input_form} is generated by the formula
\begin{equation}
\begin{aligned}
\eta=\sigma\triangle\mathbb{F}_{l},
\nonumber
\end{aligned}
\end{equation}
where $\sigma$ refers to the noise level, $\triangle$ is a random number drawn from the uniform distribution $\mathcal{U}(-1,1)$. $\mathbb{F}_{l}$ is generated by the boundary integral equation method with the single layer potential. Unless otherwise specified, we shall use the following parameters: $k=3$, $m=16$, $N_{t}=64$, $N_{\Lambda}=8$, $s=1$, $z=(0,0)$, \textcolor{black}{$\beta_{DC}=10$, $\alpha=10^{-8}$ and $\gamma=1$.}

\emph{Data generation and Network structure.} To generate training and test samples, similar to \eqref{eq:lambda_star}-\eqref{eq:Q_set}, the boundary set $\{\partial D^{(n_{s})}\}_{n_{s}=1}^{N_{s}}$ is built by $q^{(n_{s})}_{0}\sim \mathcal{U}(0.5,1.5)$, $a^{(n_{s})}_{n}, b^{(n_{s})}_{n}\sim \mathcal{N}(0,0.2^{2}), n=1,2,3,4$ and $a^{(n_{s})}_{n}, b^{(n_{s})}_{n}\sim \mathcal{U}(0,0.1), n=5,6,7,8$. \textcolor{black}{Note that the parameters $a^{(n_{s})}_{n}, b^{(n_{s})}_{n}$ for $n\leq 4$ and  $n > 4$ are assigned different distributions. This approach reduces parameter similarity and ensures that the pear-shaped and rounded square-shaped obstacles considered later are out-of-distribution cases.  Using the boundary integral equation method, we generate data pairs $\{\mathbb{F}_{l,\eta;n_{s}},\mathbb{F}_{f;n_{s}}\}_{n_{s}=1}^{N_{s}}$ with a  total sample size $N_{s}=5000$. Here, $\mathbb{F}_{l,\eta;n_{s}}$ denotes the $n_{s}$-th noisy  limited aperture data, while  $\mathbb{F}_{f;n_{s}}$ represents the corresponding  exact full aperture data. The dataset is split into $N_{train}=0.8N_{s}$ pairs for training  and $N_{test}=0.2N_{s}$ pairs for testing. Notably,  DDM does not require the exact boundary set $\{\partial D^{(n_{s})}\}_{n_{s}=1}^{N_{train}}$ during the training phase.}

The structure of DCnet is a feed-forward stack of two sequential combinations of the convolution, batch normalization and ReLU layers, followed by one fully connected layer. The number of filters in the two convolutional layers of DCnet are respectively 16 and 1. HKnet is a feed-forward stack consisting of two sequential combinations of the convolution, batch normalization and ReLU layers, followed by two fully connected layers. The number of filters in these two convolutional layers are 4 and 1, respectively. The two fully connected layers in HKnet have 512 and $4m$ neurons, respectively. Finally, the architecture of BRnet is a feed-forward stack of two sequential combinations of the convolution, batch normalization and ReLU layers, followed by three fully connected layers. The number of filters in these two convolutional layers are also 4 and 1, respectively. The three fully connected layers in BRnet have 512, 128 and $2N_{\Lambda}+1$ neurons, respectively. The initial weights are randomly drawn from $\mathcal{N}(0,0.02^{2})$ and initial biases are set to be zero. Moreover, in all the convolutional layers of DDM, one-stride, zero-padding and $3\times3$ kernel are employed. See Fig. \ref{fig:DDM_numerical_model} for the schematic illustration of the network structure of DDM in all the numerical examples.

\emph{Loss function for a mini-batch.} With the above constructed structure of DDM, we train DDM using the Adam optimizer for \textcolor{black}{1000} epochs with a learning rate of 0.0001. In addition, a mini-batch of $N_{b}$ samples are used in each iteration, where $N_{b}\ll N_{train}$. We set $N_{b}=64$. Thus, the loss function $\mathcal{L}_{DDM}$ for $N_{b}$ samples in every iteration is defined as
\begin{equation}
\begin{aligned}
\mathcal{L}_{DDM}=\mathcal{L}_{phy}+\beta_{DC}\mathcal{L}_{DC},
\nonumber
\end{aligned}
\end{equation}
where
\begin{equation}
\begin{aligned}
\mathcal{L}_{phy}:=\frac{1}{N_{b}}\sum\limits_{n_{b}=1}^{N_{b}}\widetilde{\mathcal{J}}_{phy}(\widetilde{g}_{\theta_{HK};n_{b}}, Q^{(n_{b})}_{\theta_{BR}};\alpha),\ \mathcal{L}_{DC}:=\frac{1}{N_{b}}\sum\limits_{n_{b}=1}^{N_{b}}\widetilde{\mathcal{J}}_{DC}(\mathbb{F}_{\theta_{DC},f;n_{b}}).
\nonumber
\end{aligned}
\end{equation}

\emph{Training and test relative errors.} \textcolor{black}{To investigate the convergence property of DDM, we define the discrete training relative error $\mathrm{Err}$ and test relative error $\mathrm{TErr}$ as follows}:
\begin{equation}
\begin{aligned}
\mathrm{Err}= \frac{\sqrt{\sum_{i=1}^{N_{b}}\|Q^{(i)}_{\theta_{BR}}-Q^{(i)}_{true}\|_{2}^{2}}}{\sqrt{\sum_{i=1}^{N_{b}}\|Q^{(i)}_{true}\|_{2}^{2}}},\ \ \textcolor{black}{\mathrm{TErr}= \frac{\sqrt{\sum_{i=1}^{N_{test}}\|Q^{(i)}_{\theta^{\ast}_{BR},test}-Q^{(i)}_{true,test}\|_{2}^{2}}}{\sqrt{\sum_{i=1}^{N_{test}}\|Q^{(i)}_{true,test}\|_{2}^{2}}},}
\nonumber
\end{aligned}
\end{equation}
\textcolor{black}{where $Q^{(i)}_{true}$ denotes the $i$-th exact parameter set in the mini-batch training data, and $Q^{(i)}_{\theta_{BR}}$ is the corresponding  recovered training parameter set. Similarly,  $Q^{(i)}_{true,test}$ represents the $i$-th exact parameter set in the test data, while $Q^{(i)}_{\theta^{\ast}_{BR},test}$ is the corresponding  recovered test parameter set.  In this setup, $\theta_{BR}$ is used in computing $\mathrm{Err}$ to describe the convergence of DDM during the training process, whereas  $\mathrm{TErr}$ is computed using the  optimal  learned parameter $\theta^{\ast}_{BR}$ to demonstrate the generalization performance on the test set after DDM has been trained.}

\subsection{Numerical examples}
To demonstrate that DDM can recover complex scatterers, we will perform the following three types of examples:

\begin{itemize}
\item Case 1: \textcolor{black}{an obstacle randomly selected from the test set.}
\item Case 2: a pear-shaped obstacle. The boundary is parameterized as:
$$ x(t)=(1.5+0.3\sin 3t)(\cos t, \sin t),\ 0\leq t\leq 2 \pi.$$
\item Case 3: a rounded square-shaped obstacle. The boundary is parameterized as:
$$x(t)=\frac{3}{4}(\cos^{3} t + \cos t, \sin^{3} t + \sin t),\ 0\leq t\leq 2 \pi.$$
\end{itemize}
\textcolor{black}{Here, Case 1 is used to evaluate in-distribution generalization. The last two scenarios, especially Case 3, which is non-starlike, are more complex and represent out-of-distribution cases. These three examples are used to test the performance of DDM and other methods in terms of inversion accuracy and computational efficiency.}

\emph{Other methods for comparison.} \textcolor{black}{We primarily compare our DDM framework to the classical decomposition method (CDM) \cite{DRIA,Colton_Monk1,Colton_Monk2,ochs1} in terms of computational cost and accuracy.  For a fair comparison, we employ an optimization functional from \cite{DRIA} similar to that used in DDM and ensure CDM shares the same discretization scheme.  Given noisy limited aperture data $\mathbb{F}_{l,\eta}$, CDM minimizes the following functional:
\begin{equation}
\begin{aligned}
\widetilde{\mathcal{J}}_{CDM}(\widetilde{g}, Q;\alpha):=&\frac{\pi}{m}\sum\limits_{j=n^{o}}^{N^{o}}\left|\frac{\pi}{m}\sum\limits_{i=n^{i}}^{N^{i}}\mathbb{F}_{l,\eta}^{(ji)}\widetilde{g}^{(i)}-\Phi^{\infty}(\hat{x}_{j},z)\right|^{2}\\
+&\alpha\frac{2\pi}{N_{t}}\sum\limits_{l=1}^{N_{t}}\left|\frac{\pi}{m}\sum\limits_{i=n^{i}}^{N^{i}}\mathrm{e}^{\mathrm{i}kx^{(l)}_{\Gamma}\cdot d_{i}}\widetilde{g}^{(i)}\right|^{2}\\
+&\gamma\frac{2\pi}{N_{t}}\sum\limits_{l=1}^{N_{t}}\left|\frac{\pi}{m}\sum\limits_{i=n^{i}}^{N^{i}}\mathrm{e}^{\mathrm{i}kx^{(l)}_{\Lambda}\cdot d_{i}}\widetilde{g}^{(i)}+\Phi(x^{(l)}_{\Lambda},z)\right|^{2},
\nonumber
\end{aligned}
\end{equation}
for $\widetilde{g}$ and $Q$. Here, $\widetilde{g}^{(i)}=g(d_{i})$ is the $i$-th element of $\widetilde{g}$, and $x^{(l)}_{\Gamma}$ and $x^{(l)}_{\Lambda}$,  determined by the set $Q$, share the same forms of $x^{(l)}_{\Gamma_{\theta_{BR}}}$ and $x^{(l)}_{\Lambda_{\theta_{BR}}}$ used in DDM. CDM  minimizes $\widetilde{\mathcal{J}}_{CDM}(\widetilde{g}, Q;\alpha)$ using the default BFGS algorithm with zero initial values in  Python's  Scipy library. }


\textcolor{black}{We also consider a qualitative approach known as the direct sampling method (DSM), proposed in \cite{Liu_xd1}, which introduces an indicator functional as follows:
\begin{equation}
\begin{aligned}
I(h):=\left|\varphi(h ;-d) \mathbb{F}^{T}_{l,\eta} \varphi^{T}(h ; \hat{x})\right|,
\nonumber
\end{aligned}
\end{equation}
where $h$ represents the probing point,} $\varphi(h ;\hat{x}):=(\mathrm{e}^{\mathrm{i}kh\cdot \hat{x}_{n^{o}}},\mathrm{e}^{\mathrm{i}kh\cdot \hat{x}_{n^{o}+1}},\cdots,\mathrm{e}^{\mathrm{i}kh\cdot \hat{x}_{N^{o}}})$ and $\varphi(h ;-d):=(\mathrm{e}^{-\mathrm{i}kh\cdot d_{n^{i}}},\mathrm{e}^{-\mathrm{i}kh\cdot d_{n^{i}+1}},\cdots,\mathrm{e}^{-\mathrm{i}kh\cdot d_{N^{i}}})$. We set $h\in V$, where $V$ is a grid of $100 \times 100$ equally spaced sampling points on $[-4,4]\times[-4,4]$. \textcolor{black}{This indicator $I(h)$ achieves its maximum value near the boundary of the scatterer, allowing the values of $I(h)$ across the domain $V$ to qualitatively visualize the scatterer. Although this approach is inherently qualitative and typically provides rough reconstructions for the limited aperture case, it is  computationally efficient.  In the numerical experiments, we employ DSM primarily to compare computational costs.}

\subsubsection{Example 1: Testing the computational cost and the accuracy} 

\begin{figure}[h]
    \begin{center}
        \begin{overpic}[width=0.32\textwidth]{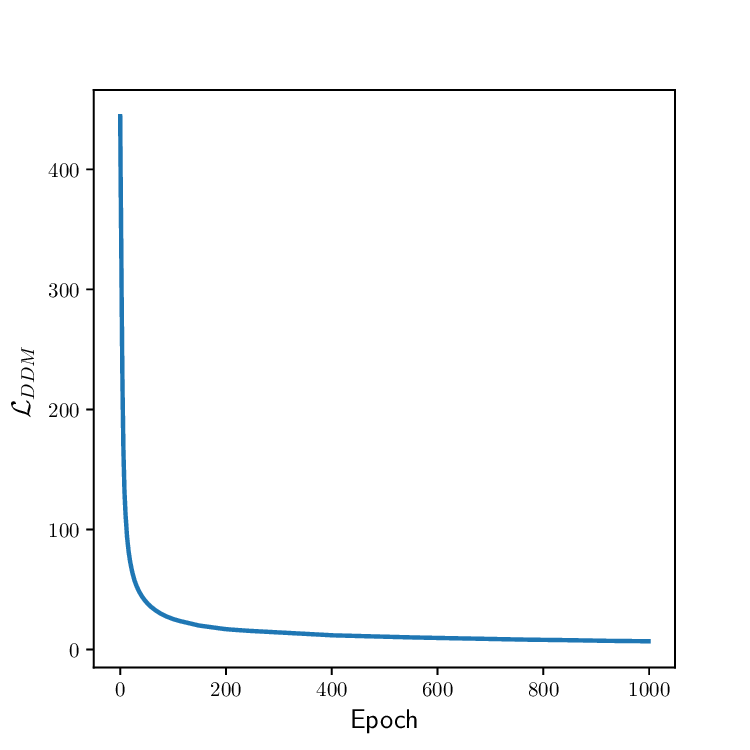}
           \put (40,91) {\scriptsize {$\mathcal{L}_{DDM}$}}
        \end{overpic}
        \hspace{-0.4cm}
        \begin{overpic}[width=0.32\textwidth]{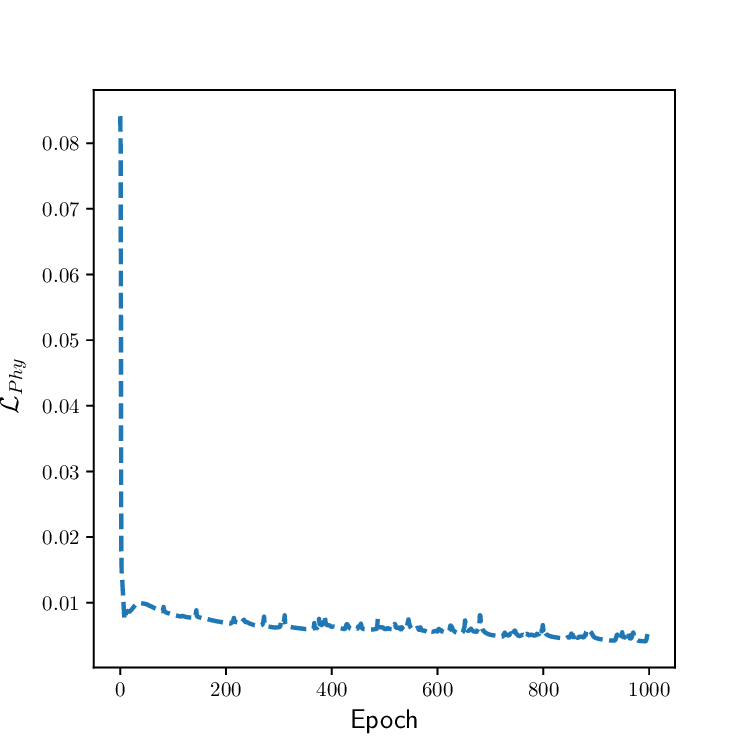}
           \put (44,91) {\scriptsize {$\mathcal{L}_{phy}$}}
        \end{overpic}
        \hspace{-0.4cm}
        \begin{overpic}[width=0.32\textwidth]{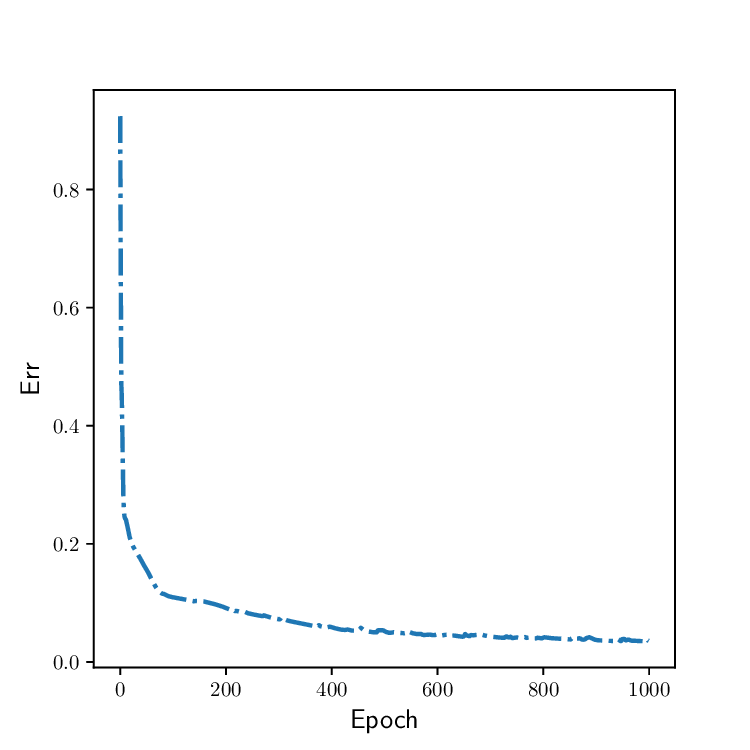}
           \put (47,91) {\scriptsize {Err}}
        \end{overpic}
    \end{center}
    \caption{\textcolor{black}{ Example 1:  The evolution of the DDM loss function $\mathcal{L}_{DDM}$, the physics-based loss function $\mathcal{L}_{phy}$, and the training relative error $\mathrm{Err}$, throughout the training process.}}
    \label{fig:ex1_loss_convergence}
\end{figure}

\textcolor{black}{In this example, we aim to evaluate the computational cost and the accuracy of the proposed DDM. For this purpose, we use the previously mentioned CDM and DSM for comparison. In addition, we set the incident aperture $\psi=[0,2\pi]$, the partial observation aperture $\phi=[0,\pi/2]$ and the noise level $\sigma=0$.}

 \begin{figure}
\begin{center}
  \begin{overpic}[width=0.32\textwidth,trim=55 20 55 15, clip=true,tics=10]{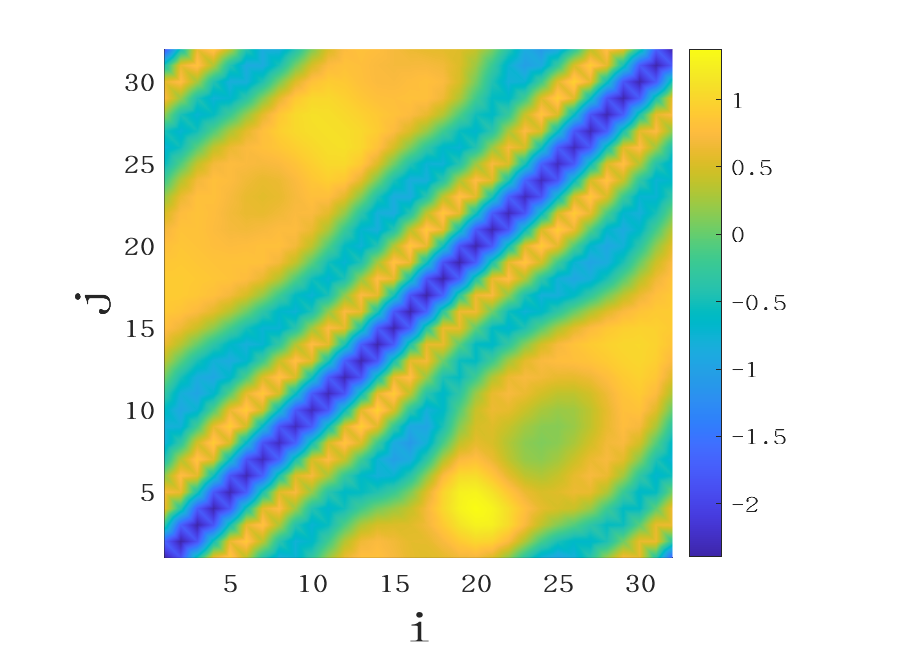}
  \put (40,86) {\footnotesize{Case 1}}
  \end{overpic}
    \begin{overpic}[width=0.32\textwidth,trim= 55 20 55 15, clip=true,tics=10]{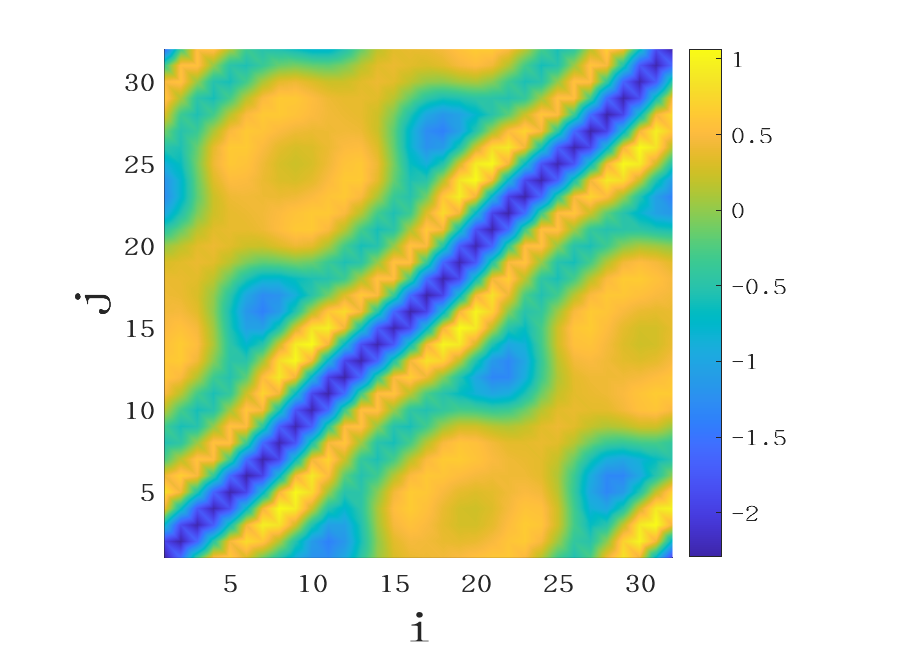}
      \put (40,86) {\footnotesize{Case 2}}
  \end{overpic}
    \begin{overpic}[width=0.32\textwidth,trim= 55 20 55 15, clip=true,tics=10]{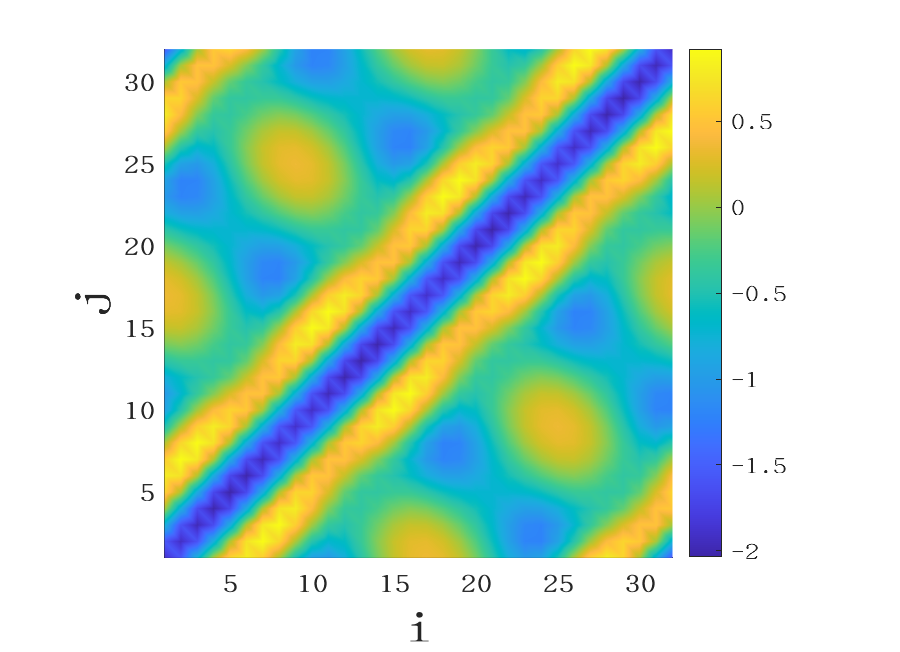}
      \put (40,86) {\footnotesize{Case 3}}
  \end{overpic}
  \begin{overpic}[width=0.32\textwidth,trim=55 20 55 15, clip=true,tics=10]{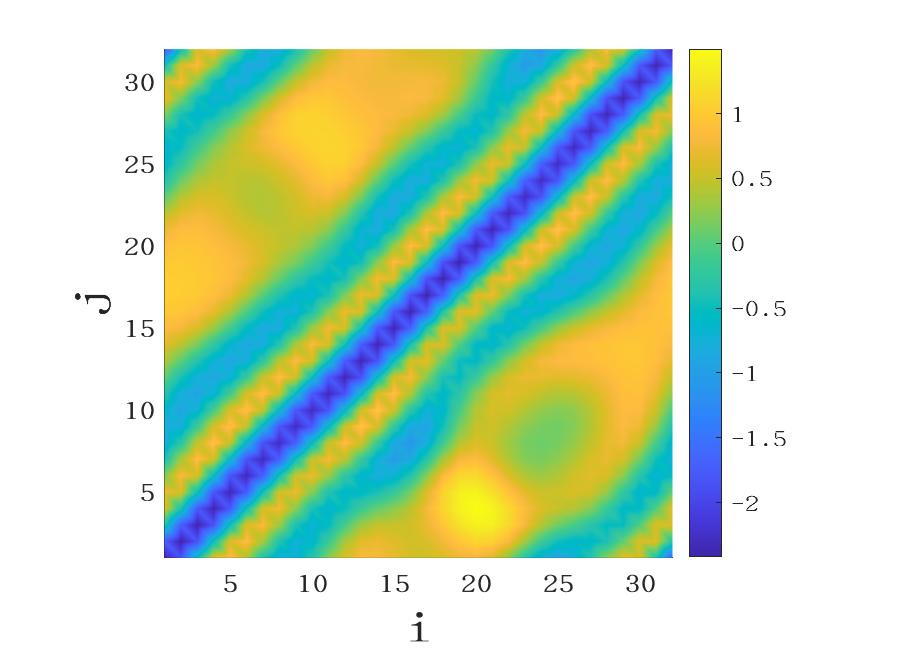}
  \end{overpic}
    \begin{overpic}[width=0.32\textwidth,trim= 55 20 55 15, clip=true,tics=10]{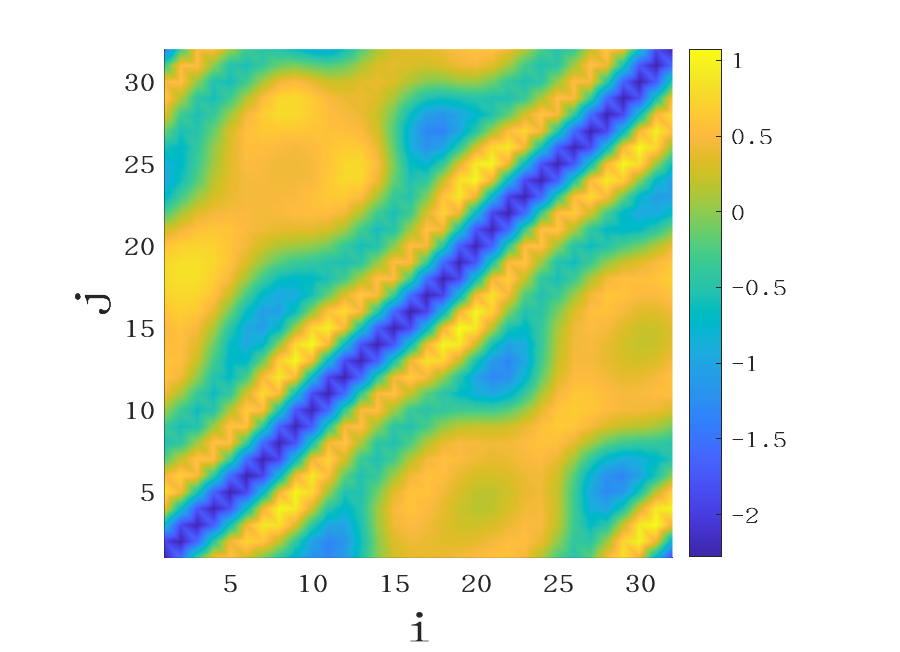}
  \end{overpic}
    \begin{overpic}[width=0.32\textwidth,trim= 55 20 55 15, clip=true,tics=10]{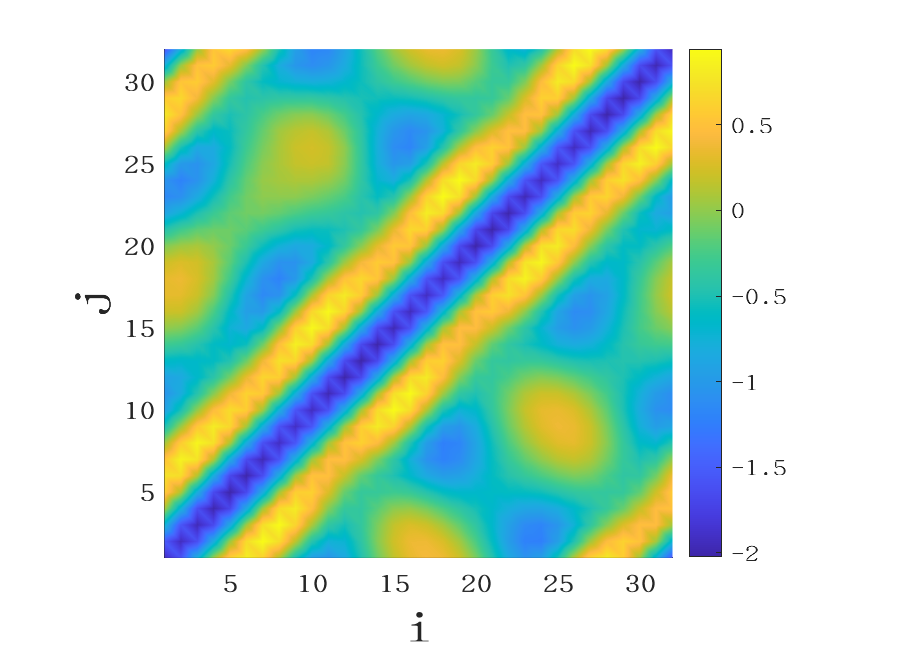}
  \end{overpic}
\end{center}
\caption{Example 1:  \textcolor{black}{The real parts of the exact (top) and recovered far-field data (bottom). The x-axis label $i$ represents the $i$-th incident direction, while the y-axis label $j$ indicates the $j$-th observation direction.}}    \label{fig:ex1_re}
  \end{figure}

\begin{figure}
\begin{center}
  \begin{overpic}[width=0.32\textwidth,trim=55 20 55 15, clip=true,tics=10]{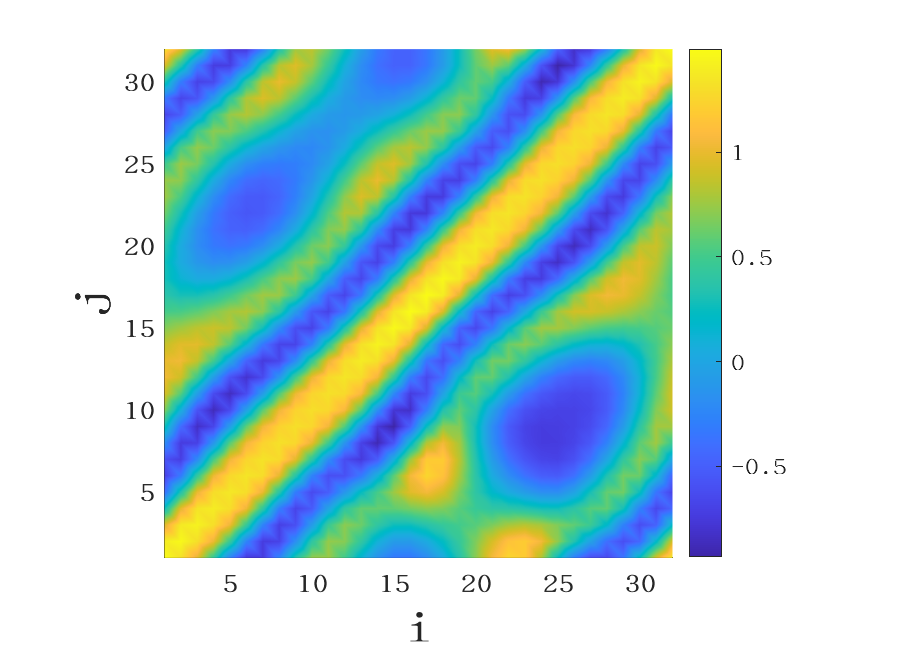}
  \put (40,86) {\footnotesize{Case 1}}
  \end{overpic}
    \begin{overpic}[width=0.32\textwidth,trim= 55 20 55 15, clip=true,tics=10]{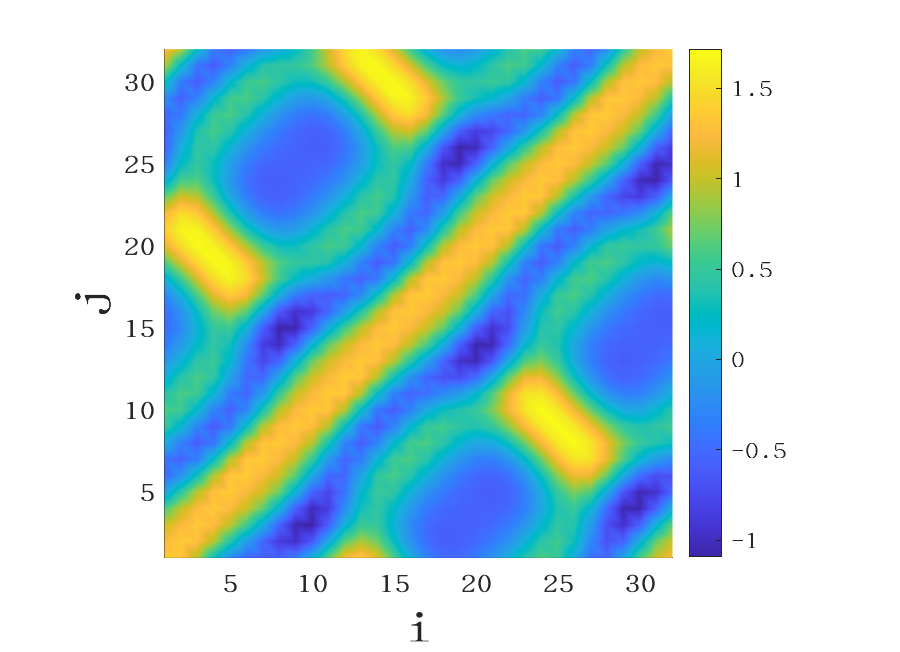}
      \put (40,86) {\footnotesize{Case 2}}
  \end{overpic}
    \begin{overpic}[width=0.32\textwidth,trim= 55 20 55 15, clip=true,tics=10]{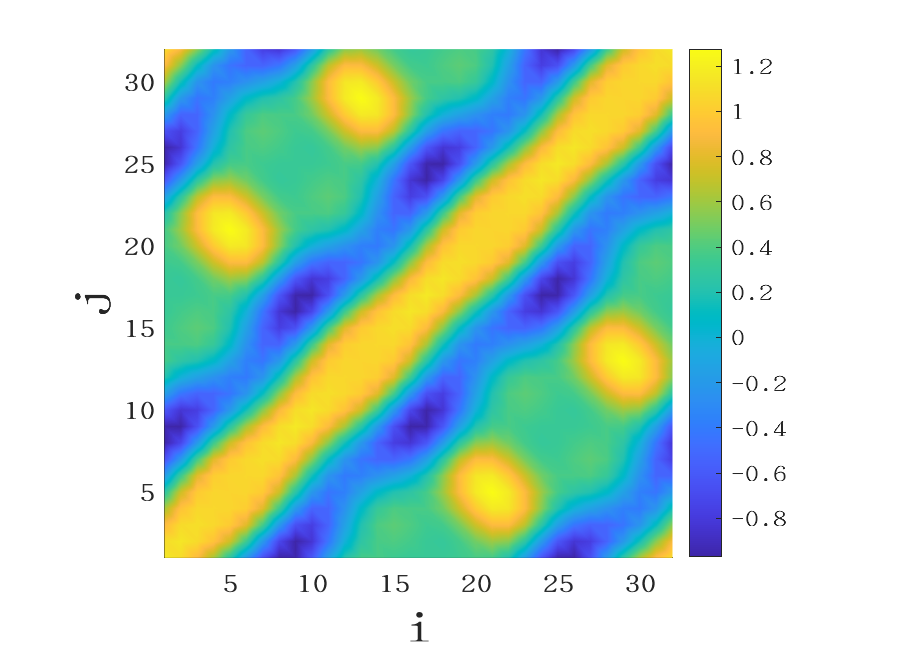}
      \put (40,86) {\footnotesize{Case 3}}
  \end{overpic}
  \begin{overpic}[width=0.32\textwidth,trim=55 20 55 15, clip=true,tics=10]{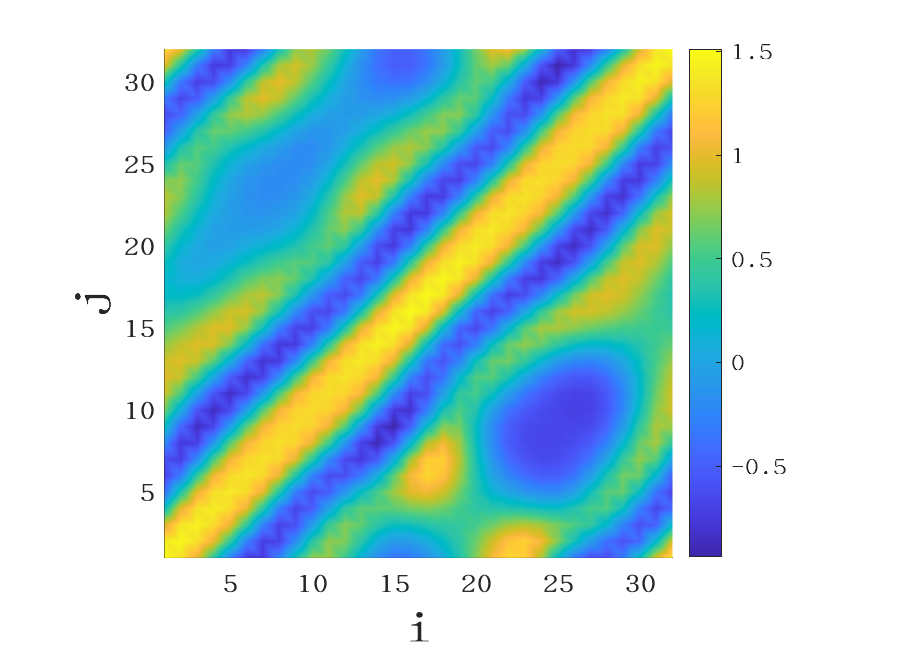}
  \end{overpic}
    \begin{overpic}[width=0.32\textwidth,trim= 55 20 55 15, clip=true,tics=10]{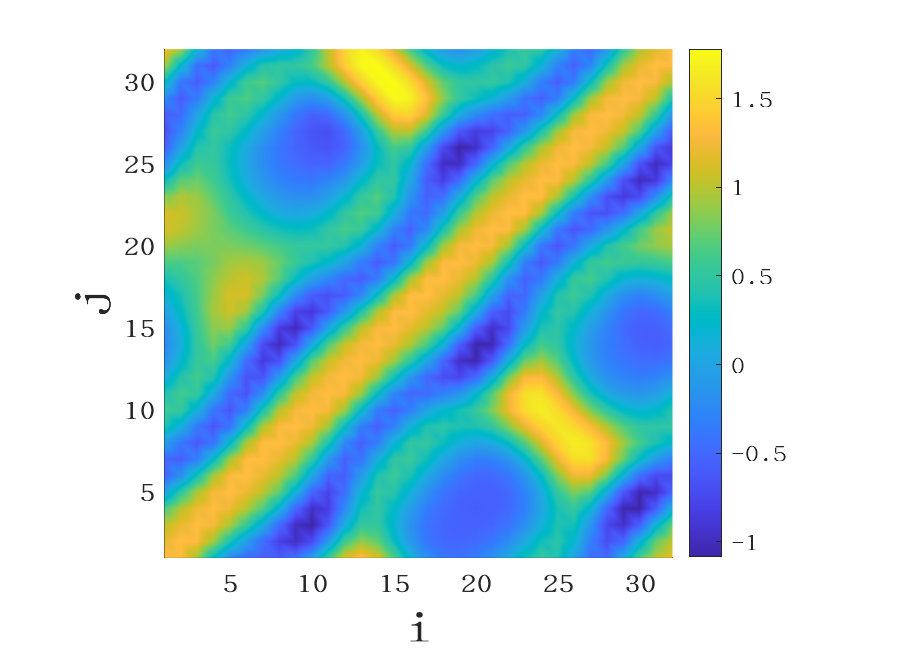}
  \end{overpic}
    \begin{overpic}[width=0.32\textwidth,trim= 55 20 55 15, clip=true,tics=10]{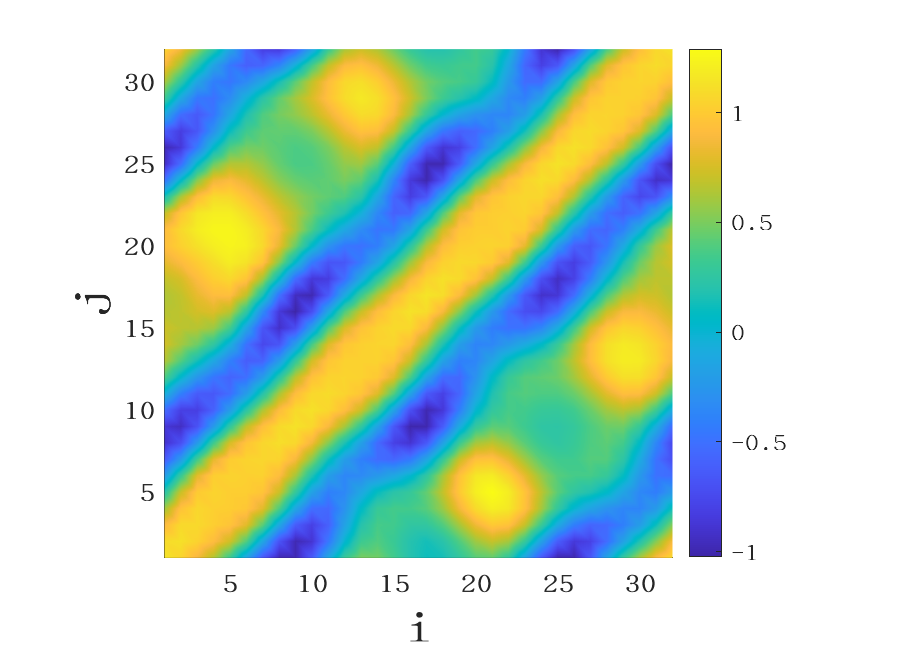}
  \end{overpic}
\end{center}
\caption{Example 1:  \textcolor{black}{The imaginary parts of the exact (top) and recovered far-field data (bottom). The x-axis label $i$ represents the $i$-th incident direction, while the y-axis label $j$ indicates the $j$-th observation direction.}}    \label{fig:ex1_im}
  \end{figure}
 
\begin{figure}[ht]
        \begin{center}
        \begin{overpic}[width=0.32\textwidth,trim=40 0 26 15, clip=true,tics=10]{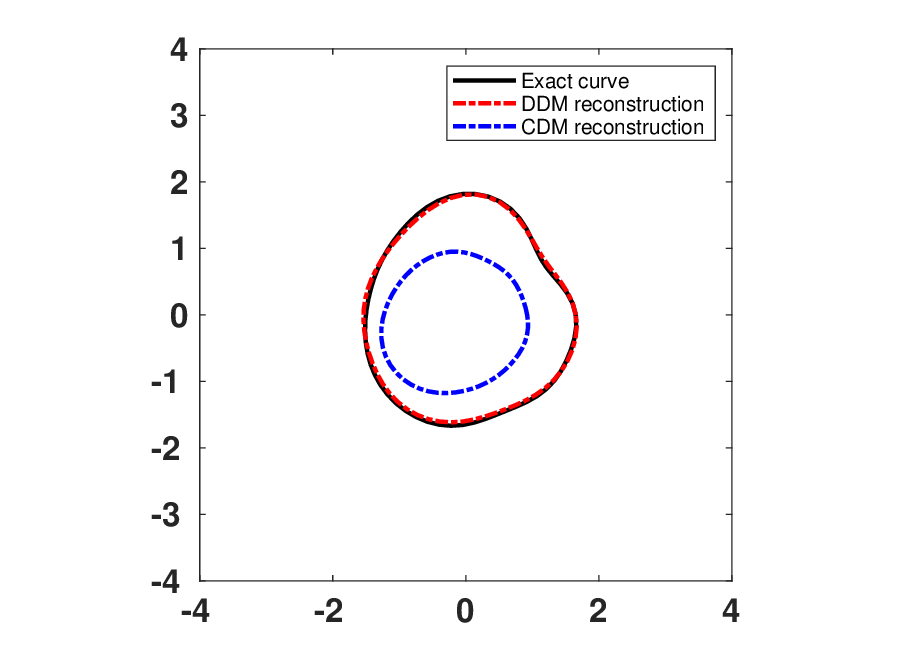}
           \put (40,83) {\scriptsize Case 1}
        \end{overpic}
        \hspace{-0.8cm}        
        \begin{overpic}[width=0.32\textwidth,trim=40 0 26 15, clip=true,tics=10]{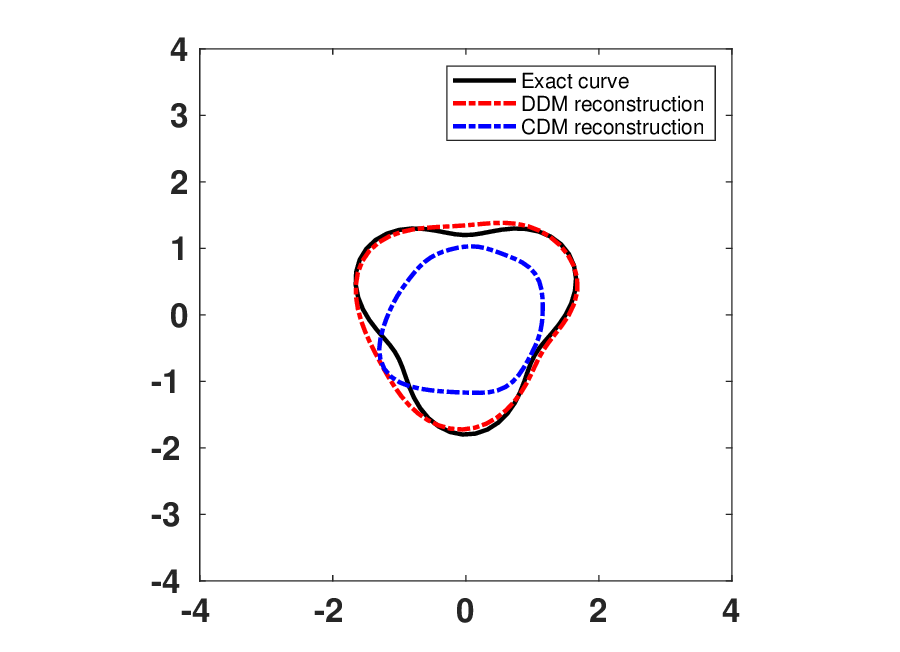}
           \put (40,83) {\scriptsize Case 2}
        \end{overpic}
        \hspace{-0.8cm}        
        \begin{overpic}[width=0.32\textwidth,trim=40 0 26 15, clip=true,tics=10]{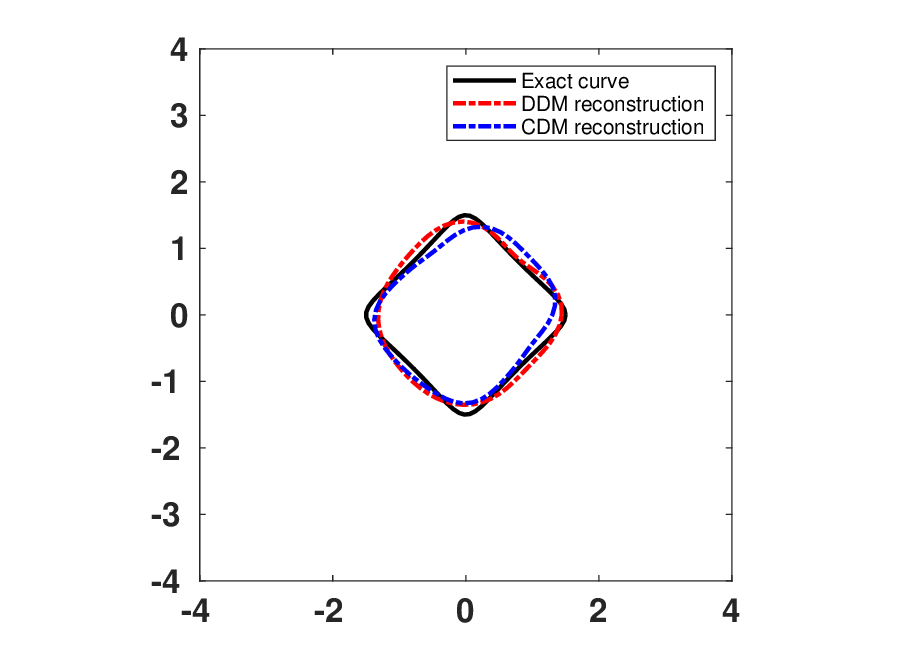}
           \put (40,83) {\scriptsize Case 3}
        \end{overpic}
    \end{center}
    \caption{\textcolor{black}{Example 1: Reconstructions produced by DDM and CDM. }}
    \label{fig:ex1_ddm_cdm}
\end{figure}

\begin{figure}[h]
        \begin{center}
        \begin{overpic}[width=0.32\textwidth,trim=50 0 45 15, clip=true,tics=10]{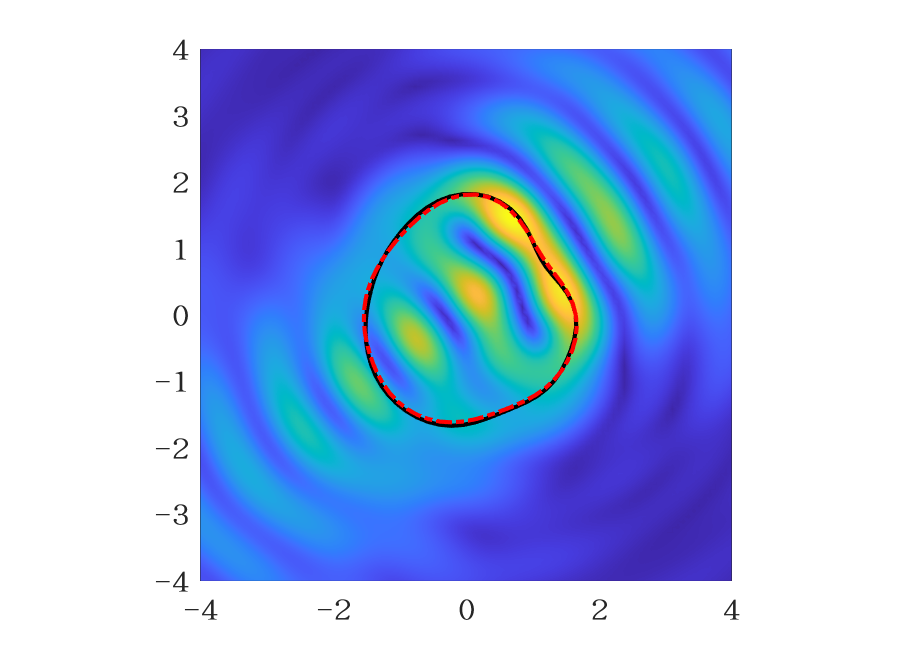}
           \put (40,88) {\scriptsize Case 1}
        \end{overpic}
        \hspace{-0.7cm}        
        \begin{overpic}[width=0.32\textwidth,trim=50 0 45 15, clip=true,tics=10]{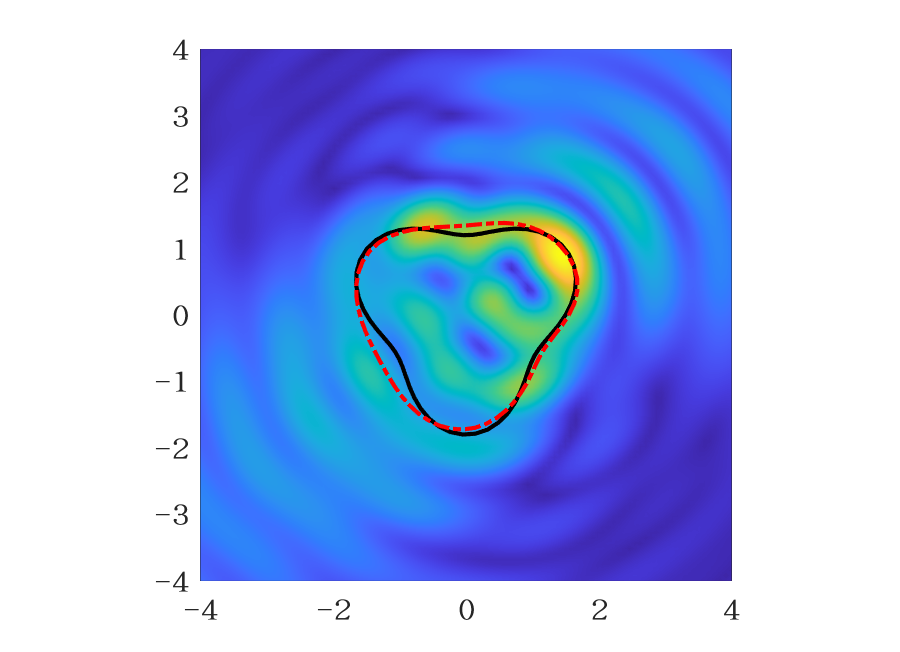}
           \put (40,88) {\scriptsize Case 2}
        \end{overpic}
        \hspace{-0.4cm}        
        \begin{overpic}[width=0.32\textwidth,trim=50 0 45 15, clip=true,tics=10]{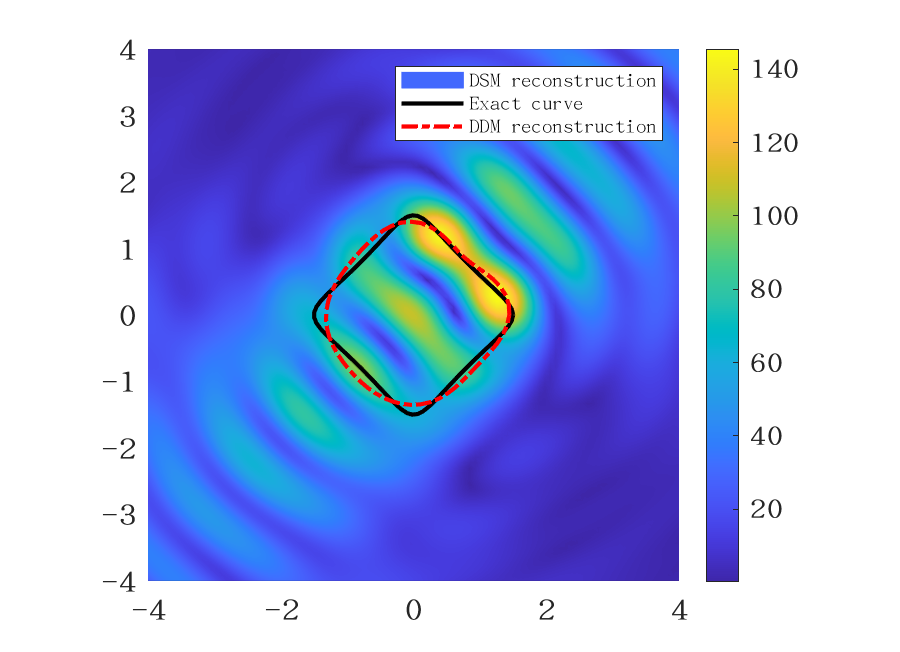}
           \put (40,88) {\scriptsize Case 3}
        \end{overpic}
    \end{center}
    \caption{\textcolor{black}{Example 1: Reconstructions made by DDM and DSM. }}
    \label{fig:ex1_ddm_dsm}
\end{figure}

\begin{figure}[ht]
        \begin{center}
        \begin{overpic}[width=0.45\textwidth,trim=40 0 26 15, clip=true,tics=10]{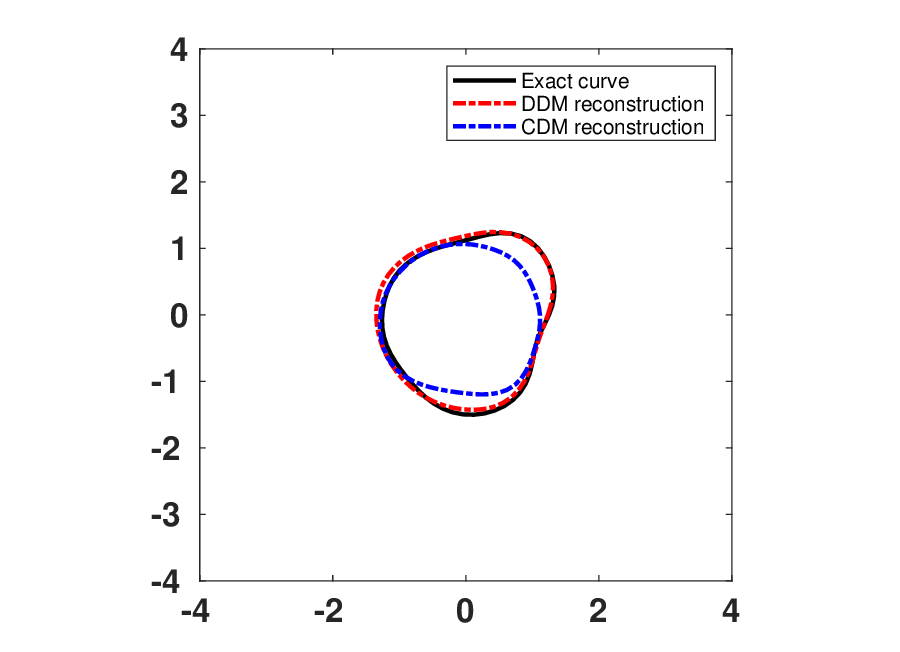}
        \put (30,81) {\footnotesize First random sample}
        \end{overpic}
        \hspace{-1cm} 
        \begin{overpic}[width=0.45\textwidth,trim=40 0 26 15, clip=true,tics=10]{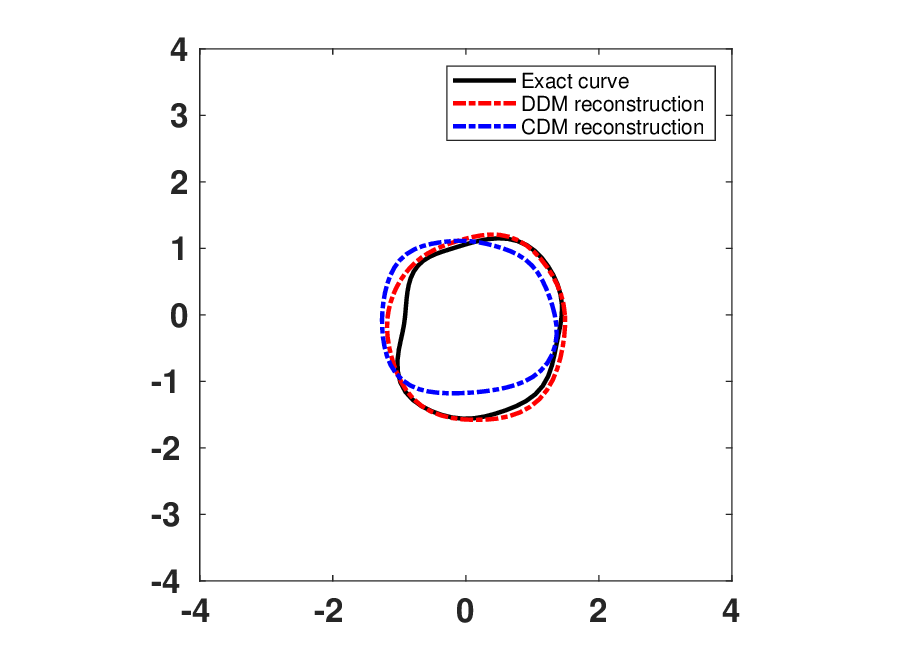}
        \put (30,81) {\footnotesize  Second random sample}
        \end{overpic}
    \end{center}
    \caption{\textcolor{black}{Example 1: Reconstructions of two randomly selected samples from the test set of Case 1.}}
    \label{fig:ex1_ddm_random}
\end{figure}

\begin{table}[htbp]
\caption{\textcolor{black}{Example 1: Numerical results for the three methods across the three cases. Here, the computational time for DDM represents the total time, including both data acquisition and inverse problem-solving.}}
\centering
\begin{tabular}{ccccccc}
\toprule
&\multicolumn{3}{c}{$\bar{e}$} &\multicolumn{3}{c}{time [s]}\\
\cmidrule(lr){2-4} \cmidrule(lr){5-7}
 & Case 1 & Case 2 & Case 3 & Case 1 & Case 2 & Case 3\\
\midrule
DDM  & 0.0183 & 0.0721 & 0.0677 & 0.94 & 0.86 & 0.84\\
CDM  & 0.3606 & 0.2843 & 0.0956 & 40.60 & 61.16 & 51.44\\
DSM  & - & - & - & 0.86 & 0.62 & 0.63\\
\bottomrule
\end{tabular}
\label{ex1_accuracy_cost}
\end{table}

\textcolor{black}{Before presenting the numerical inversion results, we first validate our theoretical findings, specifically Theorem \ref{thm:convergence_result_tend_exact_boundary}.  Fig.\ref{fig:ex1_loss_convergence} shows the evolution of the loss functions $\mathcal{L}_{DDM}$ and $\mathcal{L}_{phy}$, as well as the training relative error $\mathrm{Err}$, throughout the training process. As shown in the right display of Fig.\ref{fig:ex1_loss_convergence}, the predicted boundary $\Lambda_{\theta_{BR}}$ gradually converges to the exact boundary $\partial D$,  fully confirming Theorem \ref{thm:convergence_result_tend_exact_boundary}. For the following comparison of boundary recovery of different methods, we first pesent the far-field data reconstructed by DDM.  The real parts of the reconstructed far-field data for the three obstacles are shown in Fig.\ref{fig:ex1_re}, while the imaginary parts are shown in Fig.\ref{fig:ex1_im}. As seen in Figs.\ref{fig:ex1_re} and \ref{fig:ex1_im}, a substantial portion of the far-field data is accurately reconstructed. This observation suggests that, although we do not explicitly exploit the reciprocity relation $u^{\infty}(\hat{x},d)=u^{\infty}(-d, -\hat{x})$, the DDM framework is still capable of capturing this property.} 

\textcolor{black}{Figs. \ref{fig:ex1_ddm_cdm} and \ref{fig:ex1_ddm_dsm} present the boundary reconstructions using DDM, CDM, and DSM. It is evident that DDM outperforms both CDM and DSM, including for the more challenging out-of-distribution cases, Case 2 and Case 3. DSM can only recover the illuminated portion of the three obstacles under consideration, while CDM’s solution closely approximates a unit circle due to its tendency to converge to a local minimum. These results are expected, as both CDM and DSM rely directly on limited-aperture data, whereas DDM leverages DCnet, a deep-learning-based data retrieval approach, to enhance reconstruction.  Since only one test sample (Case 1) has been presented above, we further evaluate the performance of DDM on additional test data by randomly selecting two more samples. The reconstruction results produced by DDM and CDM are shown in Fig.\ref{fig:ex1_ddm_random}, illustrating that DDM performs well on these samples as well. The test relative error, $\mathrm{TErr}$, calculated over all test samples for DDM is 0.2521, indicating robust generalization across the test set. }

\textcolor{black}{To further quantitatively compare the reconstruction accuracy of DDM and CDM, we define the following $L^{2}$ relative error:
\begin{equation}
\begin{aligned}
\bar{e}=\frac{\sqrt{\sum_{l=1}^{N_{t}}|r^{\star}(\zeta_{l})-r^{\ast}(\zeta_{l})|^{2}}}{\sqrt{\sum_{l=1}^{N_{t}}|r^{\star}(\zeta_{l})|^{2}}},
\nonumber
\end{aligned}
\end{equation}
where ${r^{\star}}$ denotes the exact radius, ${r^{\ast}}$ denotes the recovered radius by DDM or CDM, and $\zeta_{l}=\zeta(t_{l})$  depends on whether the considered obstacle is starlike. For Case 2, $\zeta_{l}=t_{l}$, while for  Case 3, $\tan\zeta_{l}=\frac{x_{2}(t_{l})}{x_{1}(t_{l})}$, where $x_{1}(t_{l})$ and $x_{2}(t_{l})$ are the coordinates of the boundary point $x(t_{l})$  at $t_{l}$.  Tab.\ref{ex1_accuracy_cost} provides the corresponding numerical results, along with the computational costs for DDM, CDM, and DSM. For the three obstacles considered, DDM achieves total computational times of approximately 0.94, 0.86, and 0.84 seconds, respectively. The data also indicate that, while DDM’s computational cost is similar to DSM’s, it is more than 50 times faster than CDM across all cases. This highlights that DDM not only provides more accurate reconstructions but does so with substantially lower computational time. Furthermore, the  relative error for DDM is significantly smaller than that of CDM, particularly in Case 1.}

\subsubsection{Example 2: Different noise levels} 

\begin{figure}[h]
    \begin{center}
        \begin{overpic}[width=0.32\textwidth]{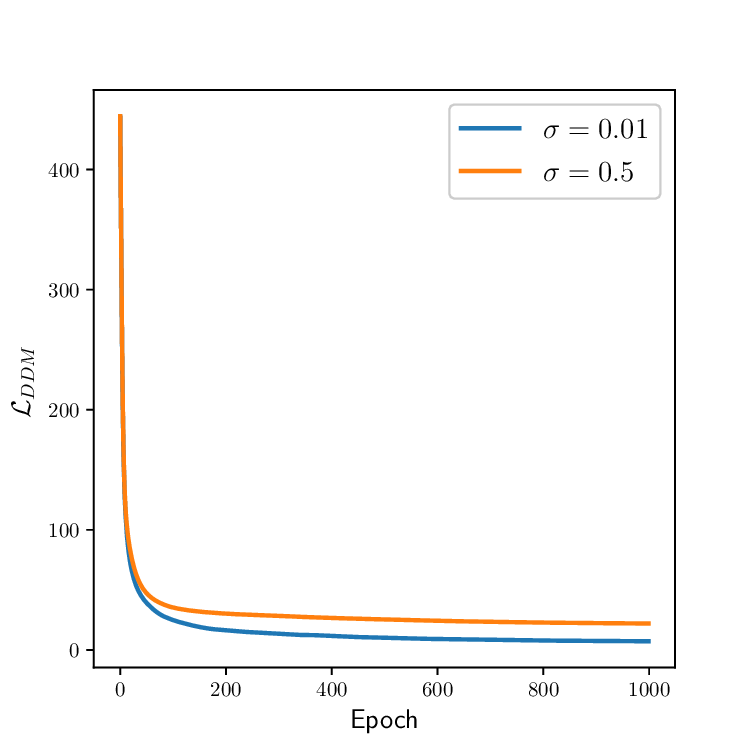}
           \put (40,91) {\scriptsize {$\mathcal{L}_{DDM}$}}
        \end{overpic}
        \hspace{-0.4cm}
        \begin{overpic}[width=0.32\textwidth]{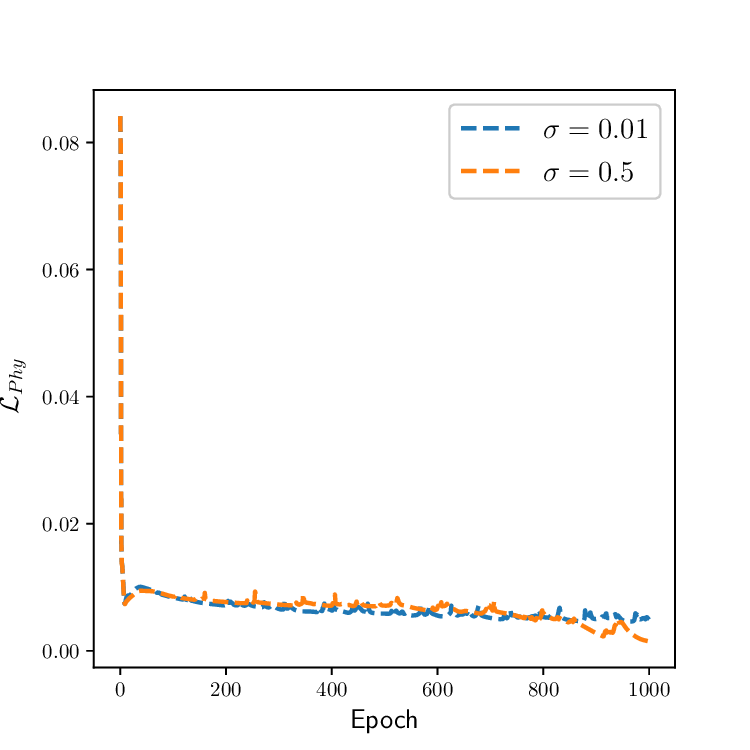}
           \put (44,91) {\scriptsize {$\mathcal{L}_{phy}$}}
        \end{overpic}
        \hspace{-0.4cm}
        \begin{overpic}[width=0.32\textwidth]{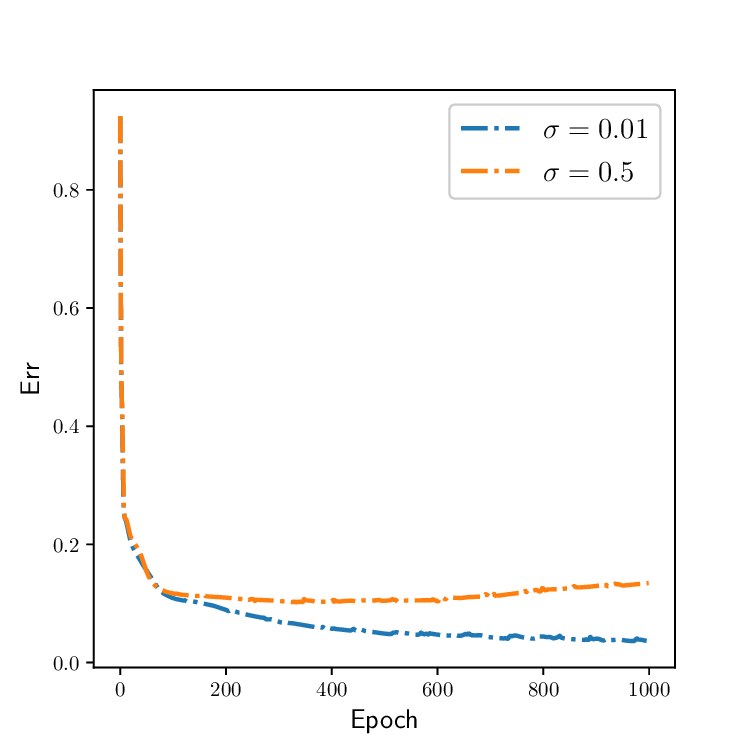}
           \put (47,91) {\scriptsize {Err}}
        \end{overpic}
    \end{center}
    \caption{\textcolor{black}{ Example 2: The evolution of the DDM loss function $\mathcal{L}_{DDM}$, the physics-based loss function $\mathcal{L}_{phy}$, and the training relative error $\mathrm{Err}$, throughout the training process.}}
     \label{fig:ex2_loss_convergence}
\end{figure}

\begin{figure}[ht]
    \begin{center}
        \begin{overpic}[width=0.32\textwidth,trim=55 20 55 15, clip=true,tics=10]{test_re_exact.eps}
           \put (19,86) {\scriptsize {Exact real parts}}
        \end{overpic}
        \hspace{-0.1cm}   
        \begin{overpic}[width=0.32\textwidth,trim=55 20 55 15, clip=true,tics=10]{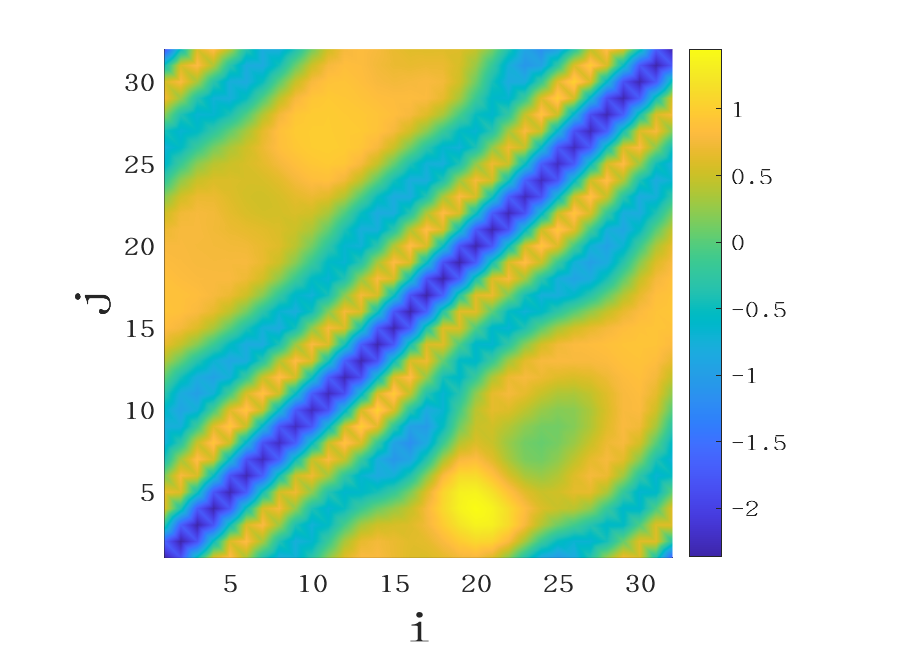}
           \put (27,86) {\scriptsize {$\sigma=0.01$}}
        \end{overpic}
        \hspace{-0.1cm}   
        \begin{overpic}[width=0.32\textwidth,trim=55 20 55 15, clip=true,tics=10]{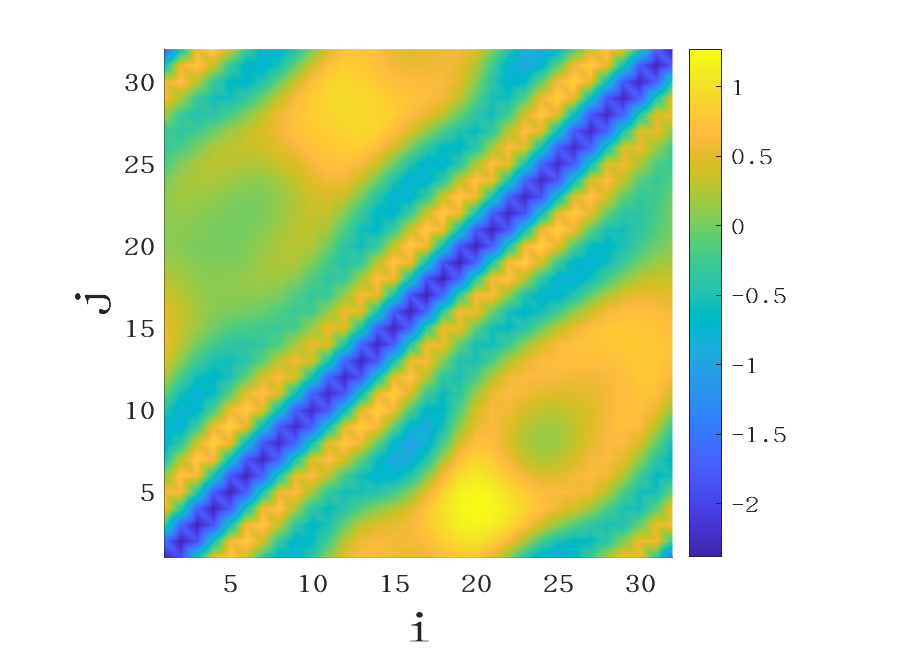}
           \put (30,86) {\scriptsize {$\sigma=0.5$}}
        \end{overpic}
    \end{center}
        \begin{center}
        \begin{overpic}[width=0.32\textwidth,trim=55 20 55 15, clip=true,tics=10]{pear_re_exact.eps}
        \end{overpic}
        \hspace{-0.1cm}   
        \begin{overpic}[width=0.32\textwidth,trim=55 20 55 15, clip=true,tics=10]{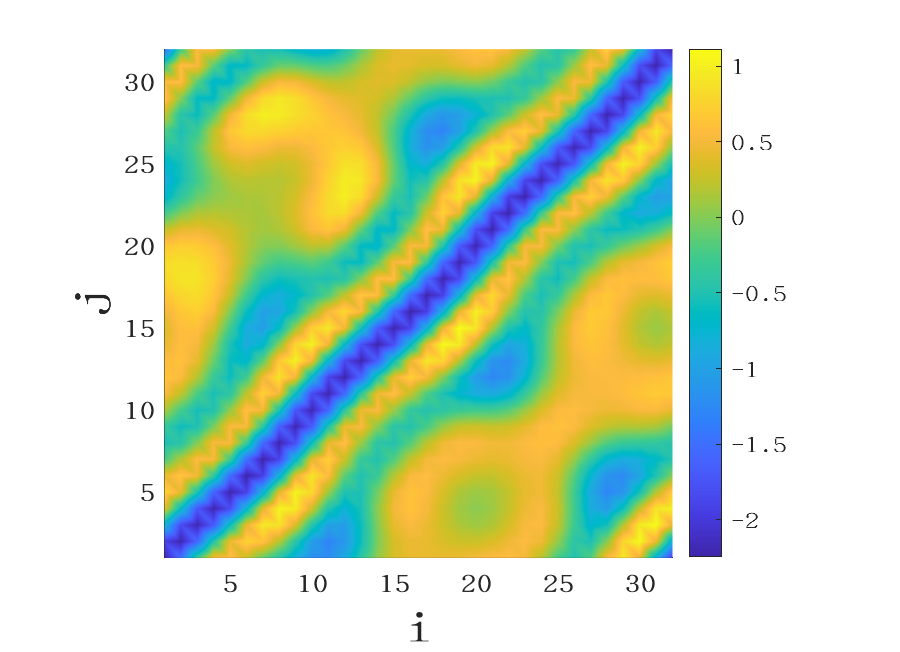}
        \end{overpic}
        \hspace{-0.1cm}   
        \begin{overpic}[width=0.32\textwidth,trim=55 20 55 15, clip=true,tics=10]{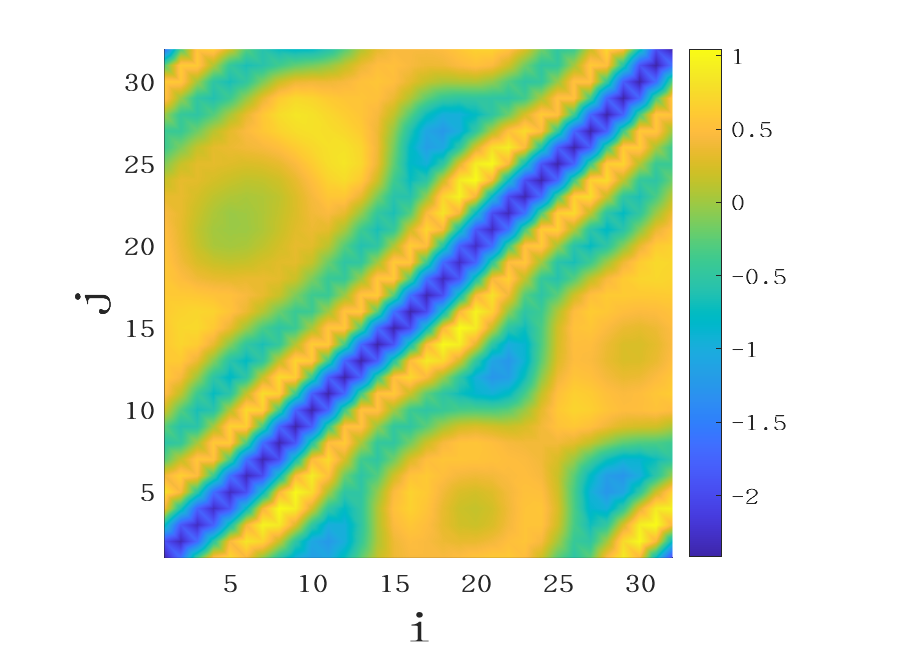}
        \end{overpic}
    \end{center}
        \begin{center}
        \begin{overpic}[width=0.32\textwidth,trim=55 20 55 15, clip=true,tics=10]{square_re_exact.eps}
        \end{overpic}
        \hspace{-0.1cm}   
        \begin{overpic}[width=0.32\textwidth,trim=55 20 55 15, clip=true,tics=10]{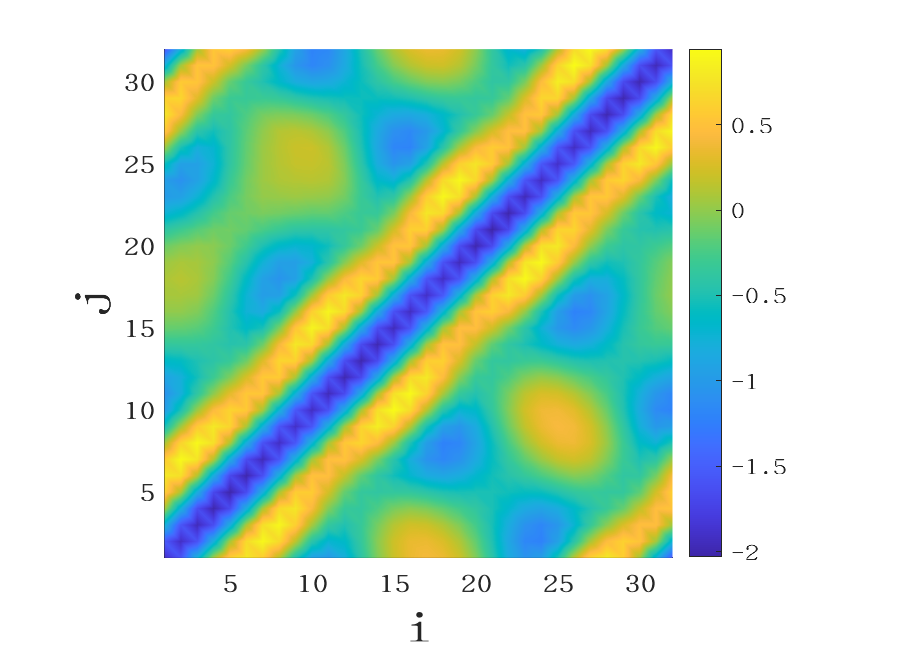}
        \end{overpic}
        \hspace{-0.1cm}   
        \begin{overpic}[width=0.32\textwidth,trim=55 20 55 15, clip=true,tics=10]{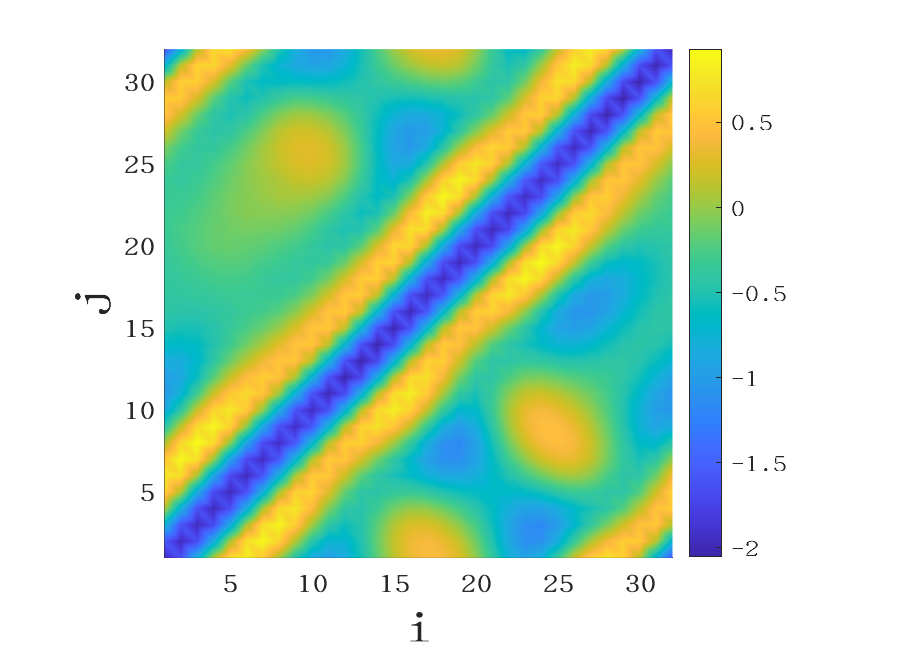}
        \end{overpic}
    \end{center}
    \caption{\textcolor{black}{Example 2: The real parts of the exact and recovered far-field data using various noise levels for Cases 1 (top),  2 (middle) and 3 (bottom).}}
    \label{fig:ex2_re}
\end{figure}
\begin{figure}[ht]
    \begin{center}
        \begin{overpic}[width=0.32\textwidth,trim=55 20 55 15, clip=true,tics=10]{test_im_exact.eps}
           \put (14,86) {\scriptsize {Exact imaginary parts}}
        \end{overpic}
        \hspace{-0.15cm}   
        \begin{overpic}[width=0.32\textwidth,trim=55 20 55 15, clip=true,tics=10]{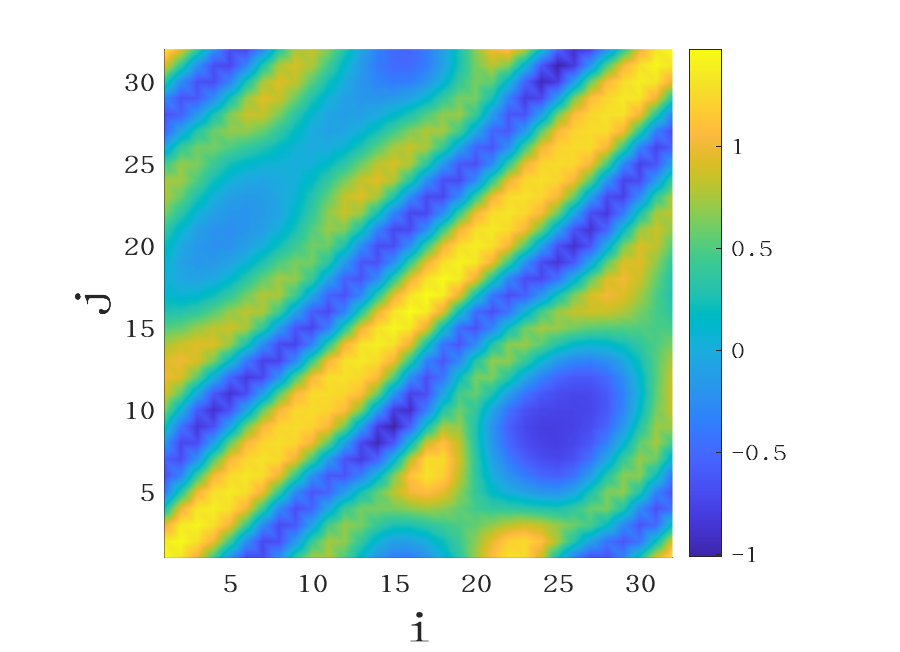}
           \put (30,86) {\scriptsize {$\sigma=0.01$}}
        \end{overpic}
        \hspace{-0.15cm}   
        \begin{overpic}[width=0.32\textwidth,trim=55 20 55 15, clip=true,tics=10]{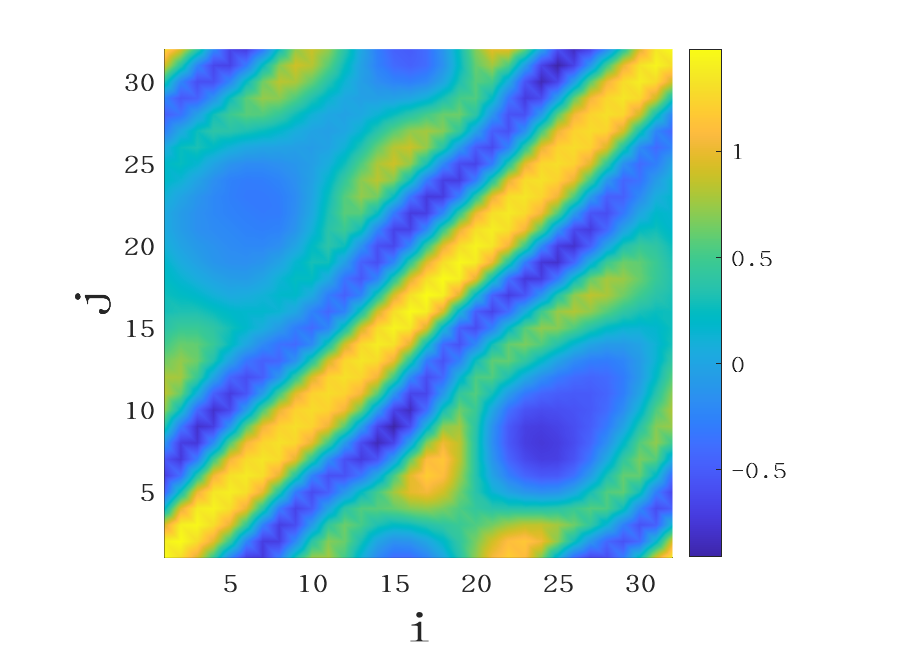}
           \put (30,86) {\scriptsize {$\sigma=0.5$}}
        \end{overpic}
    \end{center}
        \begin{center}
        \begin{overpic}[width=0.325\textwidth,trim=55 20 55 15, clip=true,tics=10]{pear_im_exact.eps}
        \end{overpic}
        \hspace{-0.1cm}   
        \begin{overpic}[width=0.31\textwidth,trim=55 20 55 15, clip=true,tics=10]{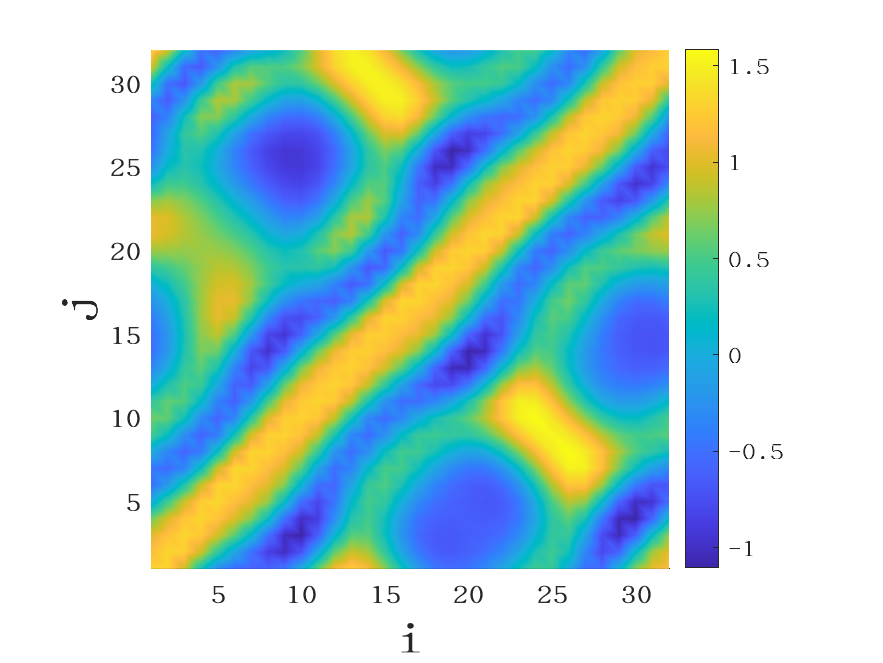}
        \end{overpic}
        \hspace{-0.15cm}   
        \begin{overpic}[width=0.32\textwidth,trim=55 20 55 15, clip=true,tics=10]{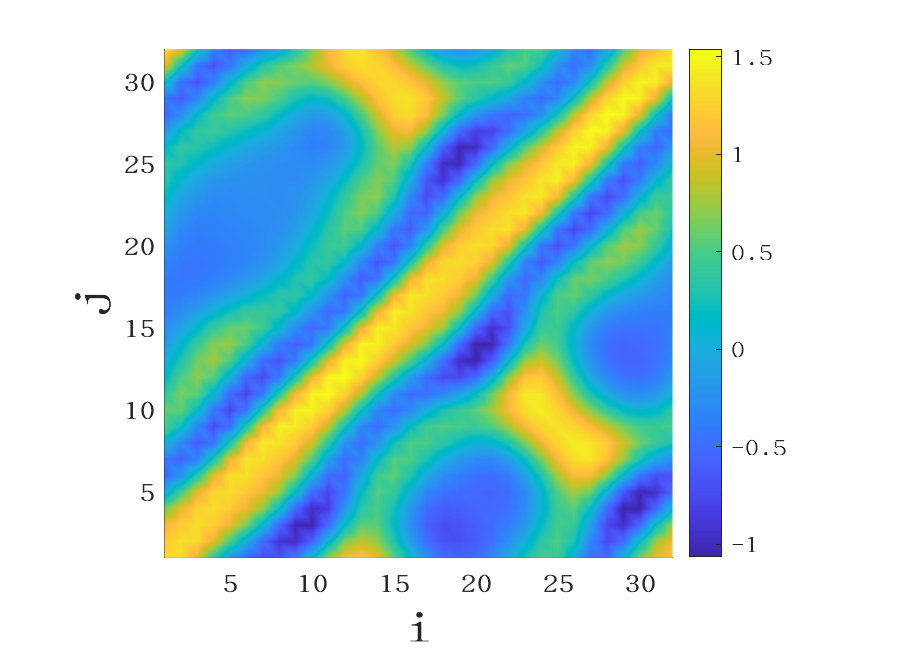}
        \end{overpic}
    \end{center}
    \begin{center}
        \begin{overpic}[width=0.32\textwidth,trim=55 20 55 15, clip=true,tics=10]{square_im_exact.eps}
        \end{overpic}
        \hspace{-0.15cm}   
        \begin{overpic}[width=0.32\textwidth,trim=55 20 55 15, clip=true,tics=10]{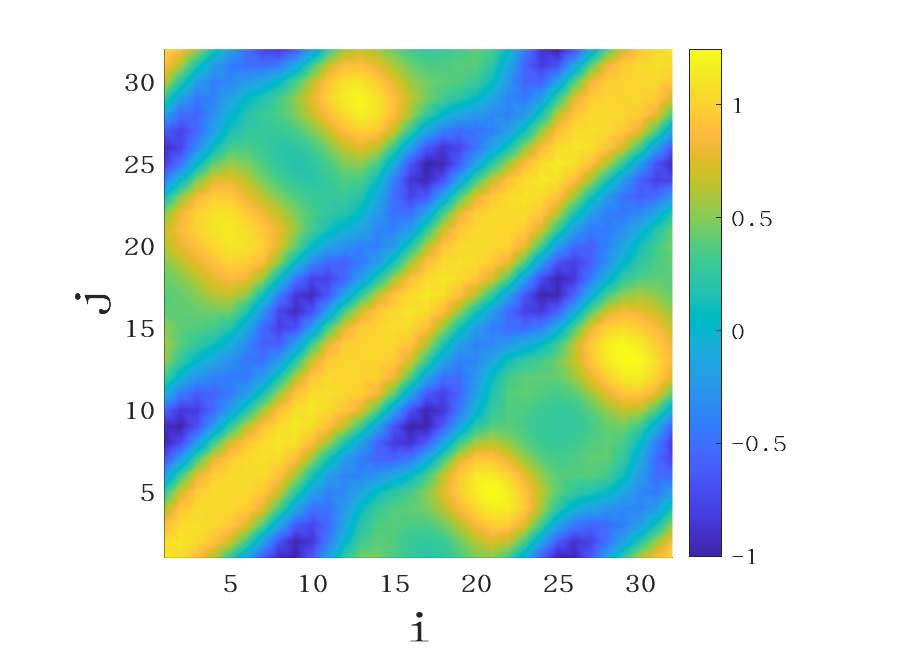}
        \end{overpic}
        \hspace{-0.15cm}   
        \begin{overpic}[width=0.32\textwidth,trim=55 20 55 15, clip=true,tics=10]{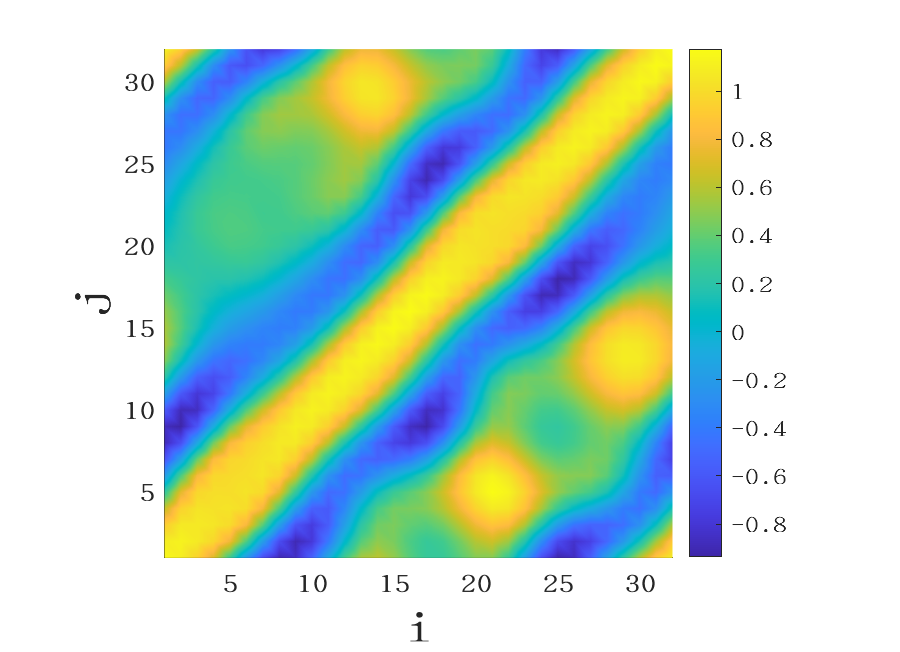}
        \end{overpic}
    \end{center}
    \caption{\textcolor{black}{Example 2: The imaginary parts of the exact and recovered far-field data using various noise levels for Cases 1 (top),  2 (middle) and  3 (bottom).}}
    \label{fig:ex2_im}
\end{figure}

\begin{figure}[ht]
    \begin{center}
        \begin{overpic}[width=0.32\textwidth,trim=40 0 26 15, clip=true,tics=10]{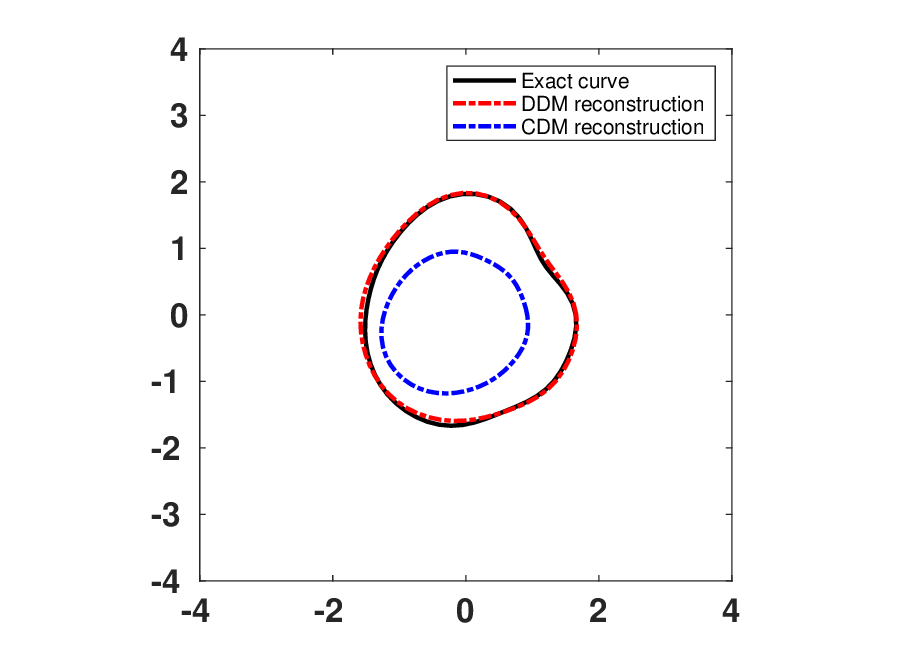}
           \put (40,82) {\scriptsize Case 1}
        \end{overpic}
        \hspace{-0.8cm}   
        \begin{overpic}[width=0.32\textwidth,trim=40 0 26 15, clip=true,tics=10]{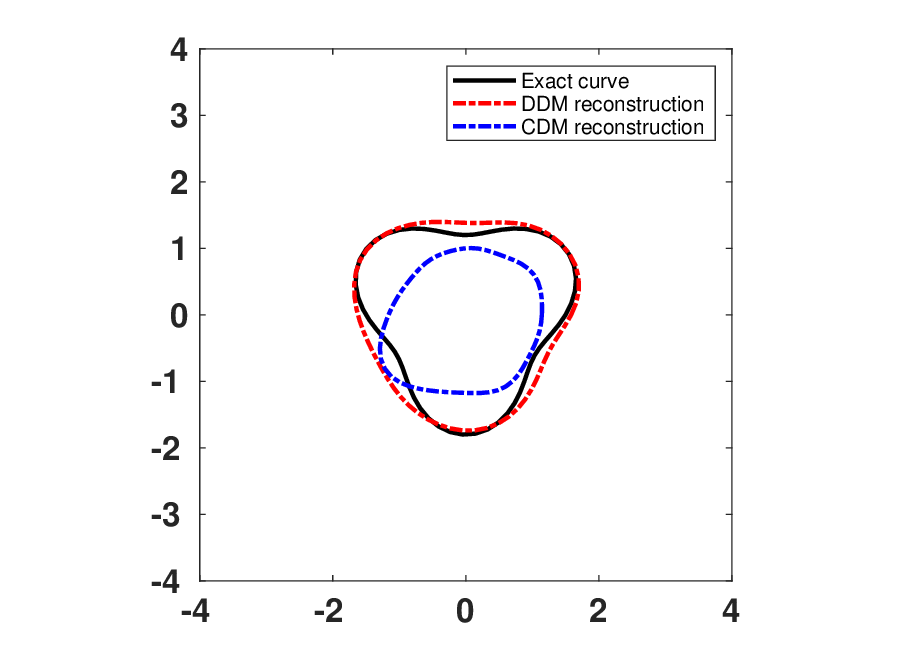}
           \put (40,82){\scriptsize Case 2}
        \end{overpic}
        \hspace{-0.8cm}   
        \begin{overpic}[width=0.32\textwidth,trim=40 0 26 15, clip=true,tics=10]{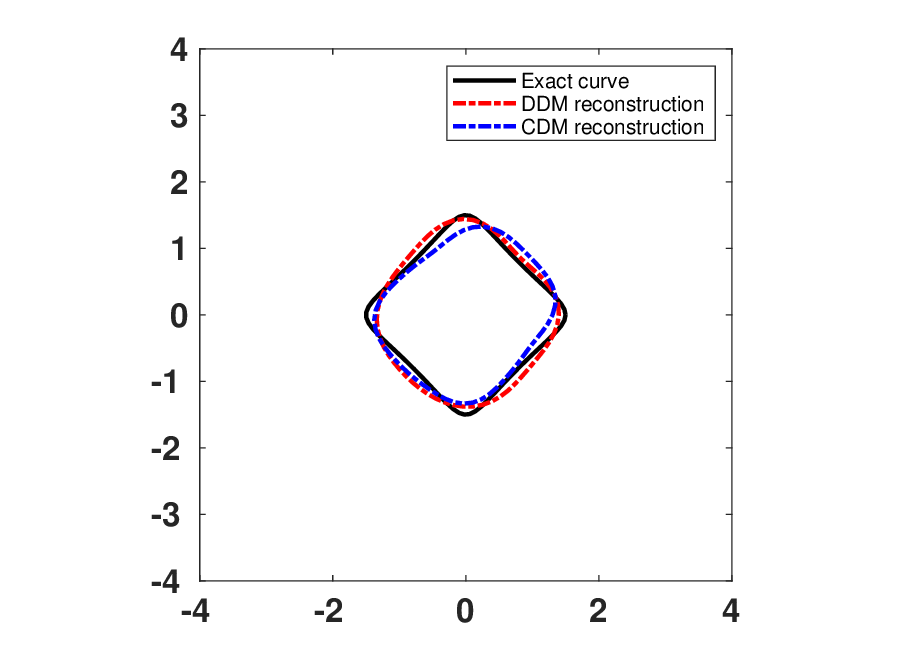}
           \put (40,82){\scriptsize Case 3}
        \end{overpic}
    \end{center}
        \begin{center}
        \begin{overpic}[width=0.32\textwidth,trim=40 0 26 15, clip=true,tics=10]{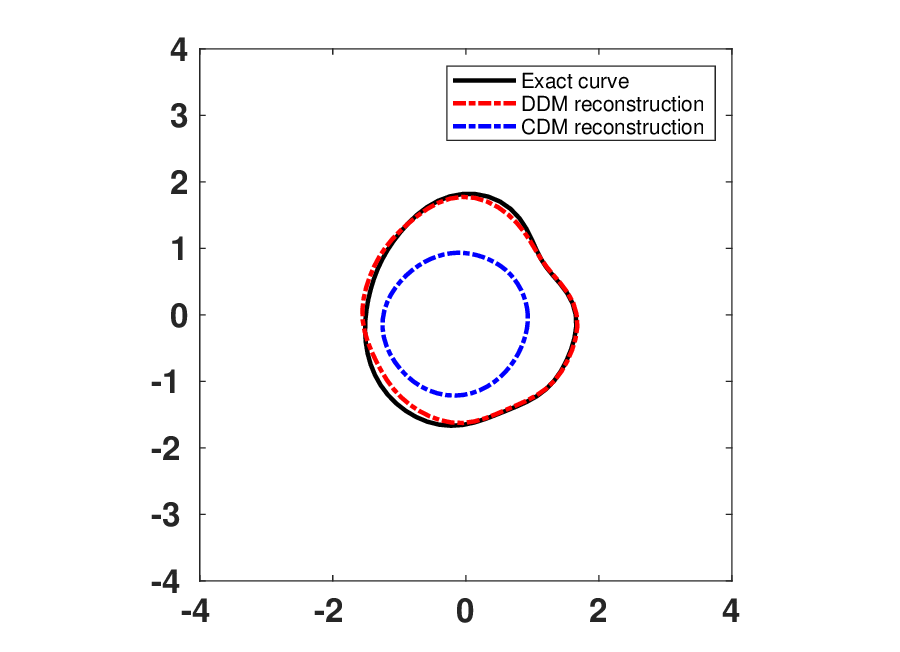}
        \end{overpic}
        \hspace{-0.8cm}   
        \begin{overpic}[width=0.32\textwidth,trim=40 0 26 15, clip=true,tics=10]{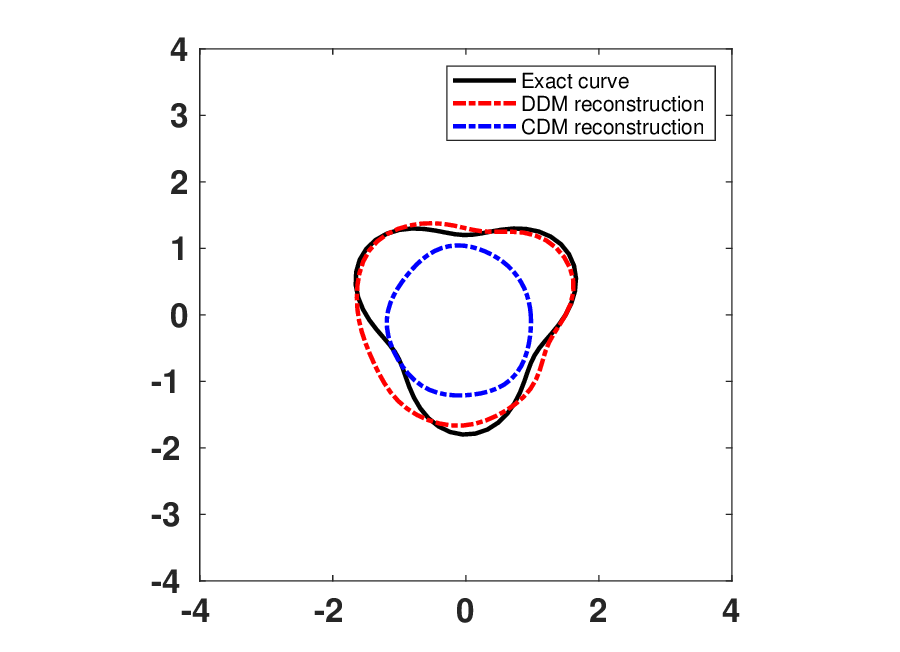}
        \end{overpic}
        \hspace{-0.8cm}   
        \begin{overpic}[width=0.32\textwidth,trim=40 0 26 15, clip=true,tics=10]{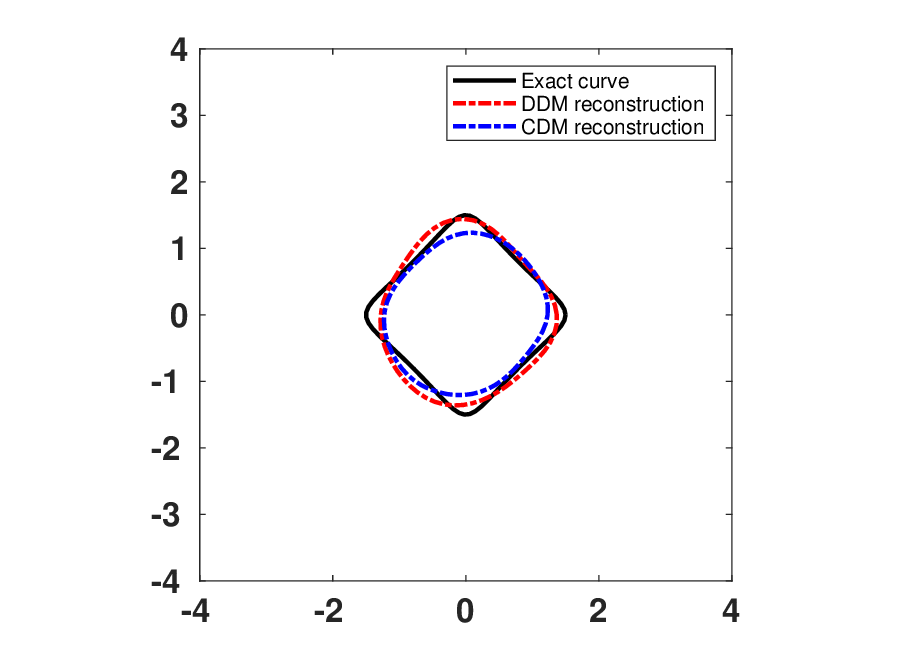}
        \end{overpic}
    \end{center}
    \caption{\textcolor{black}{Example 2: Reconstructions made by DDM and CDM using various noise levels: $\sigma=0.01$ (top), $\sigma=0.5$ (bottom).}}
    \label{fig:ex2_ddm}
\end{figure}
\begin{table}[htbp]
\caption{\textcolor{black}{Example 2: The relative errors $\bar{e}$  for the considered three cases using various noise levels.}}
\centering
\begin{tabular}{llcc}
\toprule
&\multicolumn{1}{l}{}&\multicolumn{1}{c}{$\bar{e}$}\\
\cmidrule(lr){2-4}
& Case 1 & Case 2 & Case 3\\
\midrule
DDM $\sigma=0.01$  & 0.0246 & 0.0884 & 0.0669\\
DDM $\sigma=0.50$  & 0.0348 & 0.0954 & 0.0828\\
CDM $\sigma=0.01$ & 0.3602 & 0.2915 & 0.0949\\
CDM $\sigma=0.50$ & 0.3586 & 0.3077 & 0.1278\\
\bottomrule
\end{tabular}
\label{ex2_accuracy_cost}
\end{table}

\textcolor{black}{In this example, we examine the stability of the proposed DDM using noisy, limited-aperture data. For this purpose, we set noise levels at $\sigma=0.01$ and $\sigma=0.5$. The observation aperture is set to $\phi = [0, \pi/2]$, and the incident aperture to $\psi = [0, 2\pi]$.}

\textcolor{black}{Fig.\ref{fig:ex2_loss_convergence} shows the evolution of the loss functions $\mathcal{L}_{DDM}$ and $\mathcal{L}_{phy}$, as well as the training relative error $\mathrm{Err}$, over the training period.  As illustrated in the right panel of the figure, the convergence behavior observed during training is consistent for $\sigma = 0.01$, and the training relative error $\mathrm{Err}$ remains relatively stable at $\sigma = 0.5$. As expected, as the noise level $\sigma$ increases, the training relative error $\mathrm{Err}$ also increases due to the added uncertainty from higher noise. Figs. \ref{fig:ex2_re} and \ref{fig:ex2_im} display the real and imaginary parts of the recovered far-field data for $\sigma = 0.01$ and $\sigma = 0.5$, respectively. The recovered data for $\sigma = 0.01$ closely resembles that in Example 1, as the noise level is relatively low. However, at $\sigma = 0.5$, the quality of the far-field data reconstruction deteriorates significantly, particularly in the upper-left region.}

\textcolor{black}{The boundary reconstructions by DDM and CDM with $\sigma=0.01$ and $\sigma=0.5$ are shown in Fig.\ref{fig:ex2_ddm}.  The results indicate that, even with noisy data, DDM effectively reconstructs the boundaries for all three cases, while CDM produces results that are closer to a unit circle. This robustness of DDM against observational noise can be attributed to its integration of recovered far-field data into the far-field operator in Equation \eqref{eq:farfield_operator_theta}, which partially mitigates the effects of added noise. However, as $\sigma$ increases, the reconstruction accuracy of DDM gradually deteriorates.  Tab.\ref{ex2_accuracy_cost} presents the relative errors $\bar{e}$ for both DDM and CDM. Notably, even in the noisy setting, DDM maintains a lower relative error $\bar{e}$ than CDM.  The test relative errors $\mathrm{TErr}$ for DDM at noise levels $\sigma = 0.01$ and $\sigma = 0.5$ are 0.2435 and 0.3810, respectively. Notably, $\mathrm{TErr}$ for $\sigma = 0.01$ remains close to the value for $\sigma = 0$ (from Example 1), which supports Theorem \ref{thm:expectation_noisy}.}

\subsubsection{Example 3: Different incident apertures} In the previous two examples, only a limited observation aperture was considered. However, in many real-world scenarios, the incident aperture is also restricted. Therefore, in this example, we examine the effect of a limited incident aperture. We set the incident apertures as $\psi = [0, \pi]$ and $\psi = [0, \pi/2]$, with an observation aperture of $\phi = [0, \pi/2]$ and a noise level of $\sigma = 0.01$. It is important to note that both the incident and observation apertures are highly restricted, making this inverse problem significantly more challenging.

\begin{figure}[th]
    \begin{center}
        \begin{overpic}[width=0.324\textwidth]{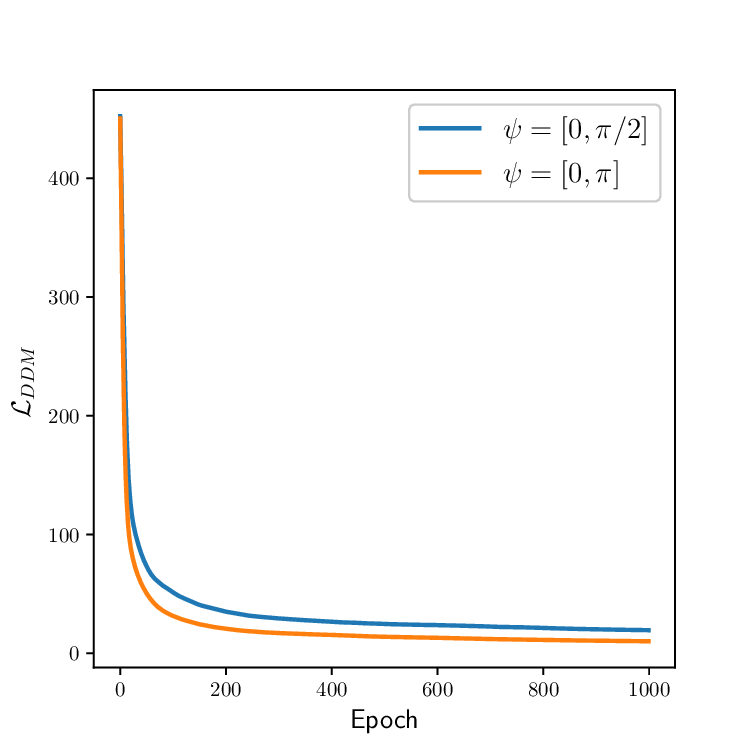}
           \put (40,91) {\scriptsize {$\mathcal{L}_{DDM}$}}
        \end{overpic}
        \hspace{-0.4cm} 
        \begin{overpic}[width=0.324\textwidth]{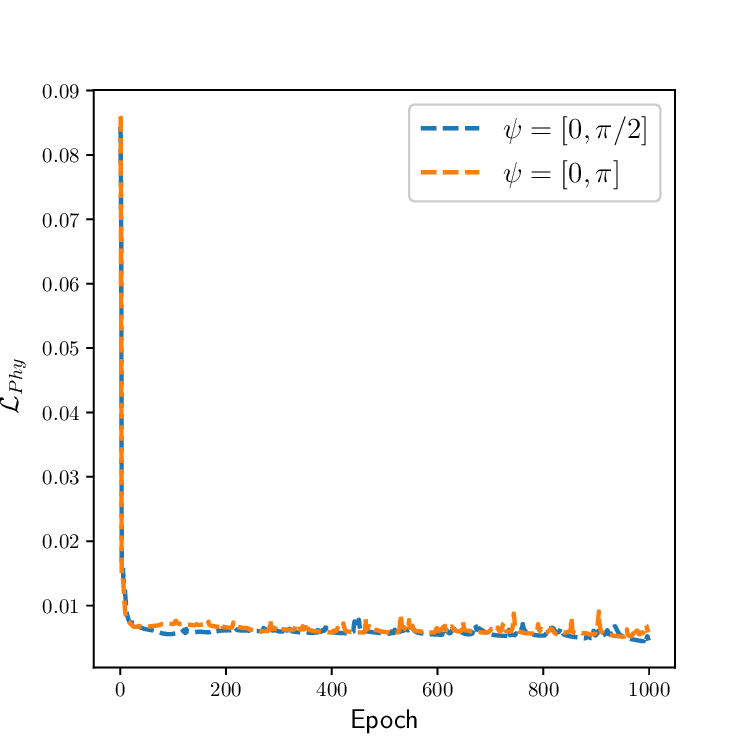}
           \put (44,91) {\scriptsize {$\mathcal{L}_{phy}$}}
        \end{overpic}
        \hspace{-0.4cm} 
        \begin{overpic}[width=0.324\textwidth]{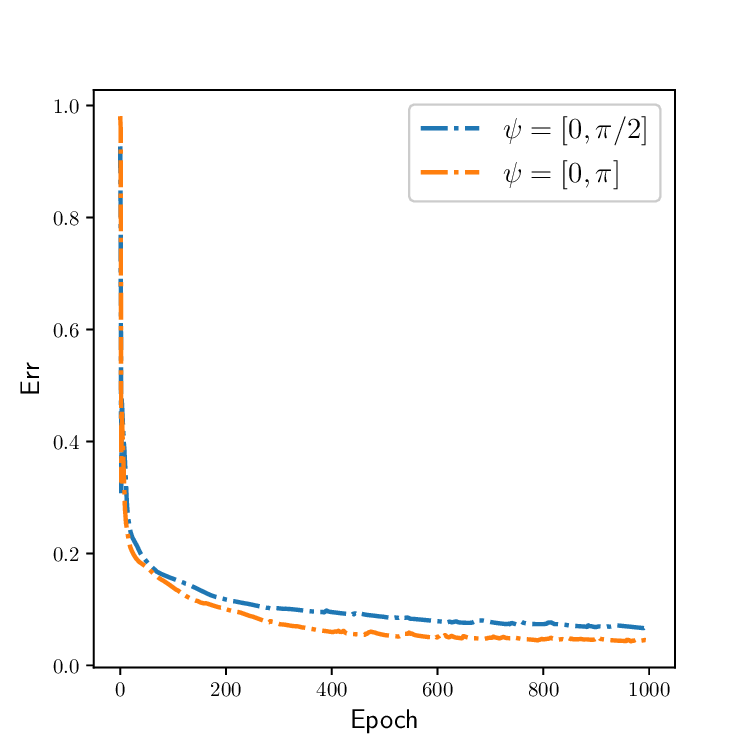}
           \put (47,91) {\scriptsize {Err}}
        \end{overpic}
    \end{center}
    \caption{\textcolor{black}{Example 3: The evolution of the DDM loss function $\mathcal{L}_{DDM}$, the physics-based loss function $\mathcal{L}_{phy}$, and the training relative error $\mathrm{Err}$, throughout the training process.}}
    \label{fig:ex3_loss_convergence}
\end{figure}
\begin{figure}[ht]
    \begin{center}
        \begin{overpic}[width=0.32\textwidth,trim=55 20 55 15, clip=true,tics=10]{test_re_exact.eps}
           \put (19,86) {\scriptsize {Exact real parts}}
        \end{overpic}
        \hspace{-0.15cm} 
        \begin{overpic}[width=0.32\textwidth,trim=55 20 55 15, clip=true,tics=10]{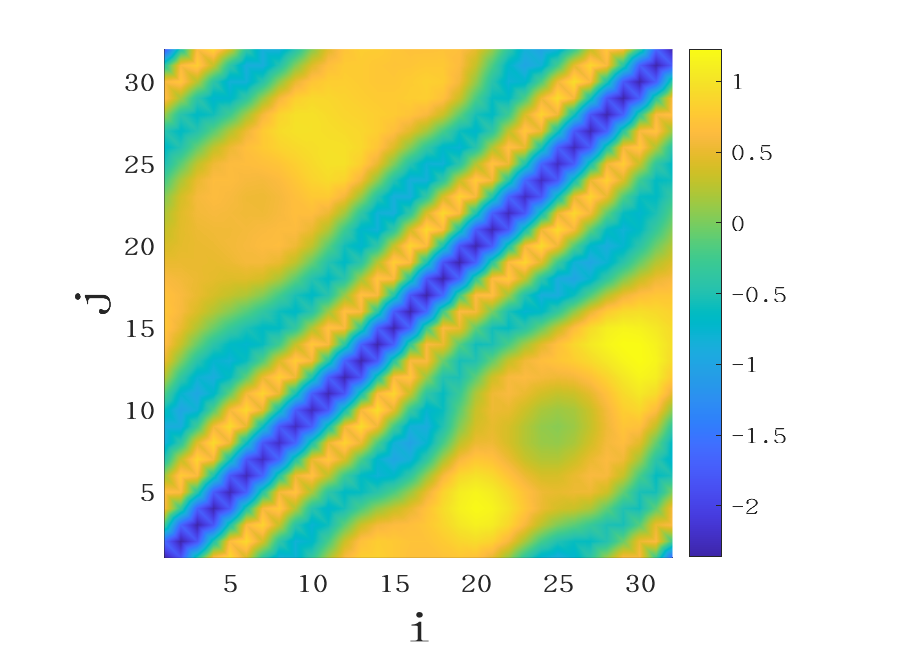}
           \put (27,86) {\scriptsize {$\psi=[0,\pi]$}}
        \end{overpic}
        \hspace{-0.15cm} 
       \begin{overpic}[width=0.32\textwidth,trim=55 20 55 15, clip=true,tics=10]{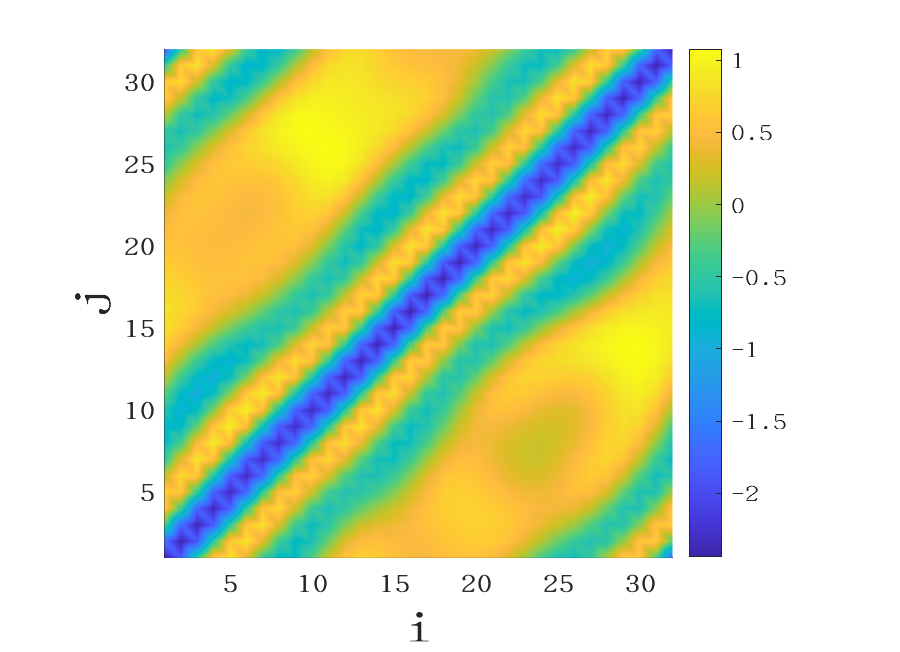}
           \put (27,86) {\scriptsize {$\psi=[0,\pi/2]$}}
        \end{overpic}
    \end{center}
        \begin{center}
        \begin{overpic}[width=0.32\textwidth,trim=55 20 55 15, clip=true,tics=10]{pear_re_exact.eps}
        \end{overpic}
        \hspace{-0.15cm} 
        \begin{overpic}[width=0.32\textwidth,trim=55 20 55 15, clip=true,tics=10]{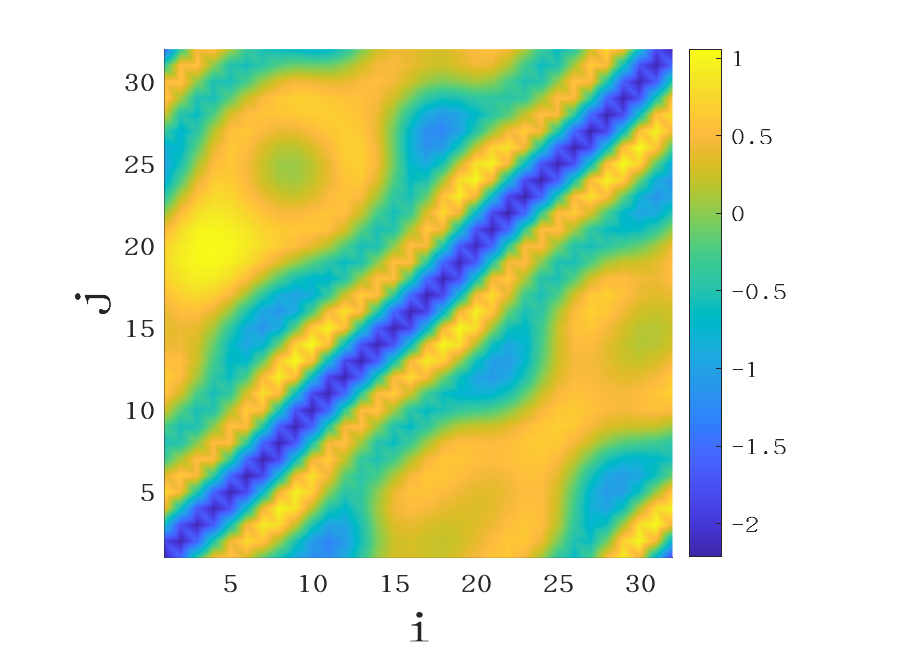}
        \end{overpic}
        \hspace{-0.15cm} 
        \begin{overpic}[width=0.32\textwidth,trim=55 20 55 15, clip=true,tics=10]{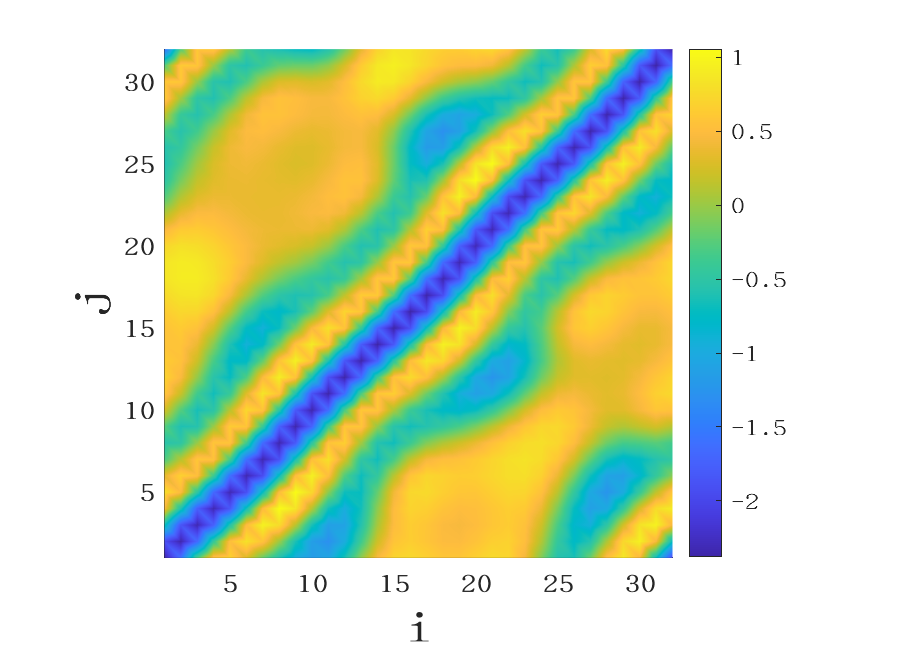}
        \end{overpic}
    \end{center}
            \begin{center}
        \begin{overpic}[width=0.32\textwidth,trim=55 20 55 15, clip=true,tics=10]{square_re_exact.eps}
        \end{overpic}
        \hspace{-0.15cm} 
        \begin{overpic}[width=0.32\textwidth,trim=55 20 55 15, clip=true,tics=10]{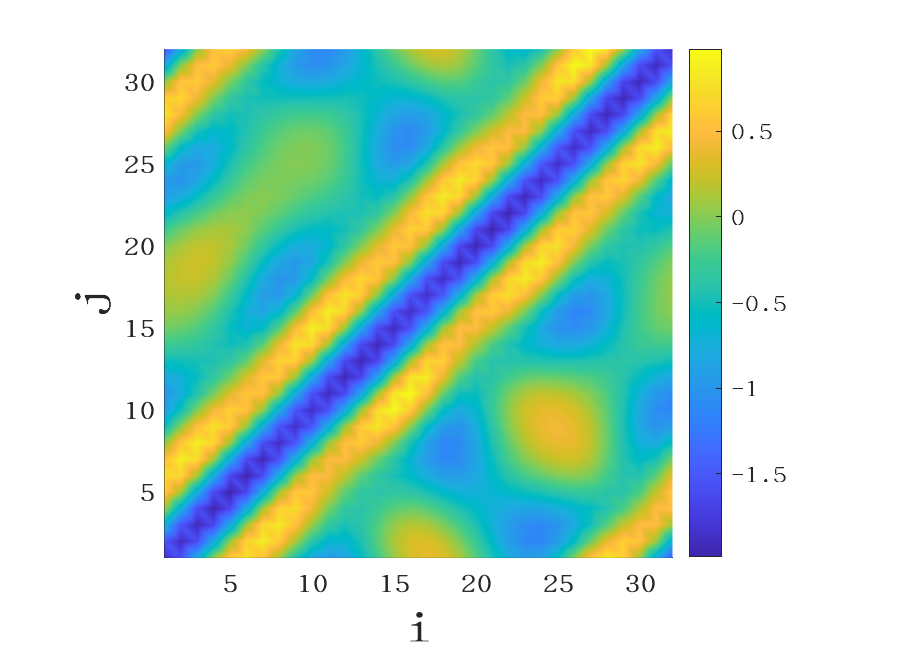}
        \end{overpic}
        \hspace{-0.15cm} 
         \begin{overpic}[width=0.32\textwidth,trim=55 20 55 15, clip=true,tics=10]{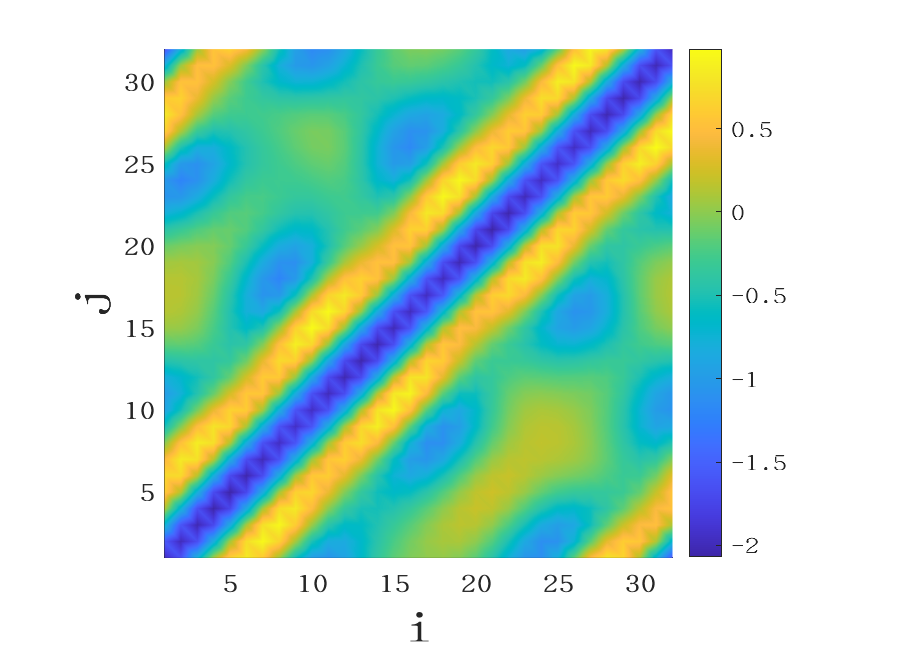}
        \end{overpic}
    \end{center}
    \caption{\textcolor{black}{Example 3: The real parts of the exact and recovered far-field data using various incident apertures for Cases 1 (top),  2 (middle) and  3 (bottom).}}
    \label{fig:ex3_re}
\end{figure}
\begin{figure}[ht]
    \begin{center}
        \begin{overpic}[width=0.32\textwidth,trim=55 20 55 15, clip=true,tics=10]{test_im_exact.eps}
           \put (14,86) {\scriptsize {Exact imaginary parts}}
        \end{overpic}
        \hspace{-0.15cm} 
        \begin{overpic}[width=0.32\textwidth,trim=55 20 55 15, clip=true,tics=10]{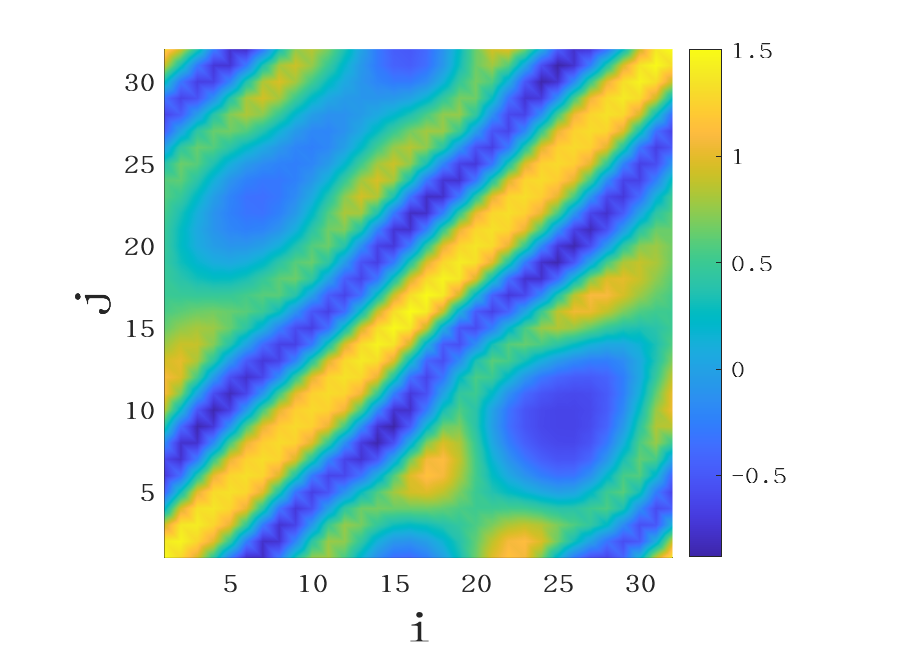}
           \put (28,86) {\scriptsize {$\psi=[0,\pi]$}}
        \end{overpic}
        \hspace{-0.15cm} 
        \begin{overpic}[width=0.32\textwidth,trim=55 20 55 15, clip=true,tics=10]{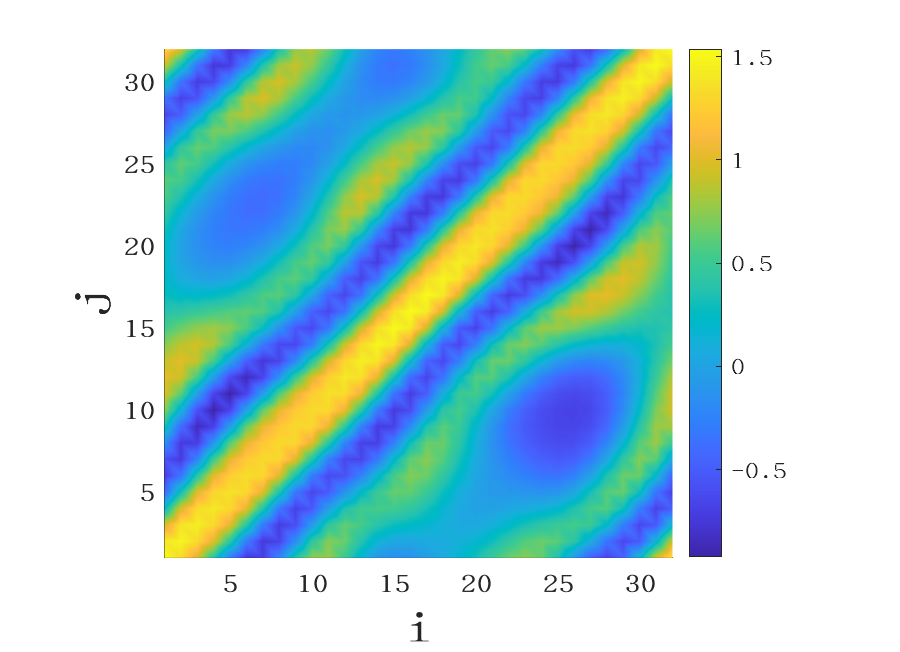}
           \put (27,86) {\scriptsize {$\psi=[0,\pi/2]$}}
        \end{overpic}
    \end{center}
        \begin{center}
        \begin{overpic}[width=0.32\textwidth,trim=55 20 55 15, clip=true,tics=10]{pear_im_exact.eps}
        \end{overpic}
        \hspace{-0.15cm} 
        \begin{overpic}[width=0.32\textwidth,trim=55 20 55 15, clip=true,tics=10]{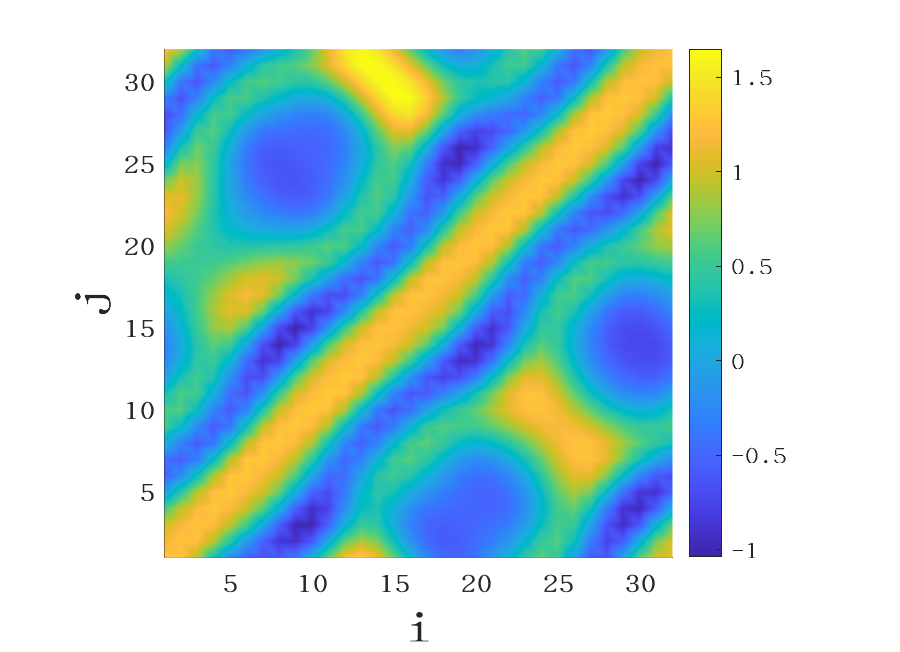}
        \end{overpic}
        \hspace{-0.15cm} 
        \begin{overpic}[width=0.32\textwidth,trim=55 20 55 15, clip=true,tics=10]{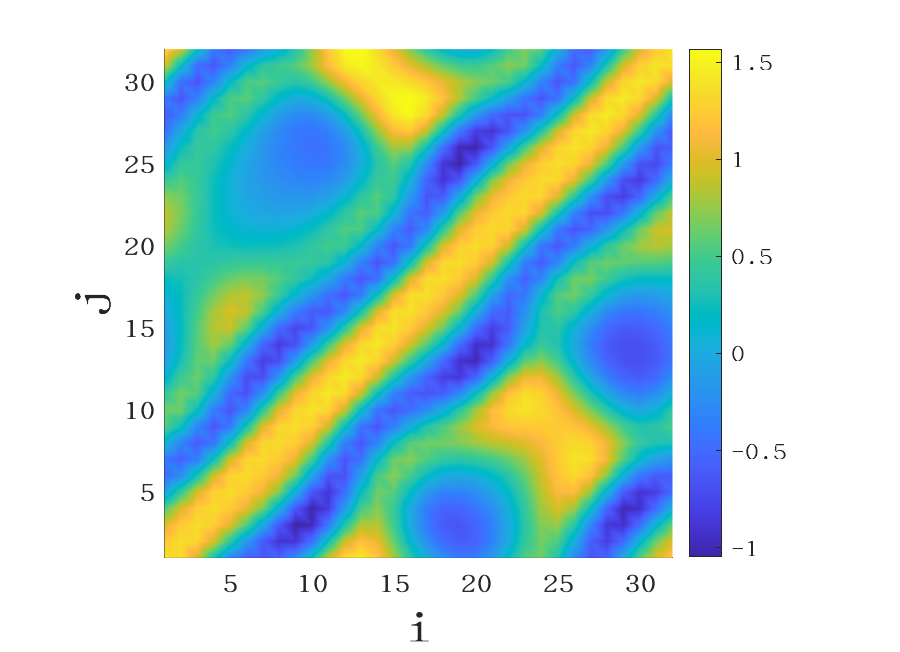}
        \end{overpic}
    \end{center}
            \begin{center}
        \begin{overpic}[width=0.32\textwidth,trim=55 20 55 15, clip=true,tics=10]{square_im_exact.eps}
        \end{overpic}
        \hspace{-0.15cm} 
        \begin{overpic}[width=0.32\textwidth,trim=55 20 55 15, clip=true,tics=10]{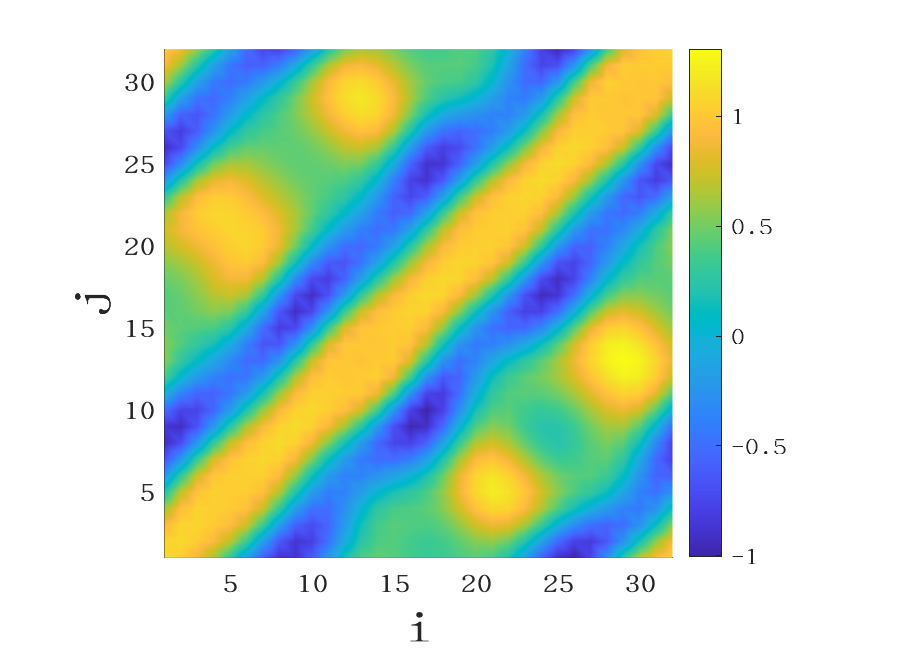}
        \end{overpic}
        \hspace{-0.15cm} 
        \begin{overpic}[width=0.32\textwidth,trim=55 20 55 15, clip=true,tics=10]{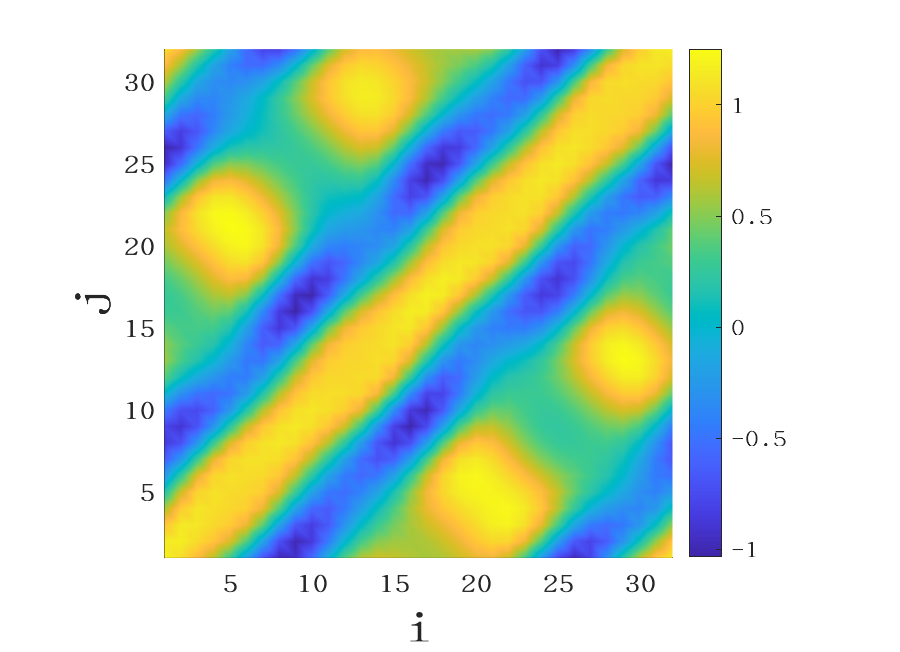}
        \end{overpic}
    \end{center}
    \caption{\textcolor{black}{Example 3:  The imaginary parts of the exact and recovered far-field data using various incident apertures for Cases 1(top),  2(middle) and 3(bottom).}}
    \label{fig:ex3_im}
\end{figure}

\begin{figure}[ht]
    \begin{center}
        \begin{overpic}[width=0.32\textwidth,trim=40 0 26 15, clip=true,tics=10]{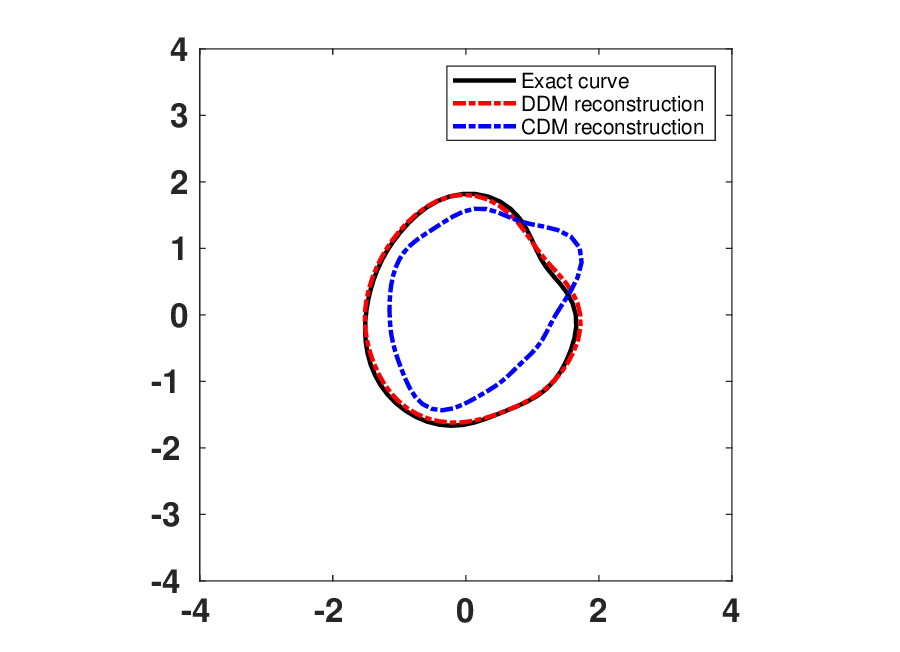}
           \put (40,84) {\scriptsize Case 1}
        \end{overpic}
        \hspace{-0.8cm} 
        \begin{overpic}[width=0.32\textwidth,trim=40 0 26 15, clip=true,tics=10]{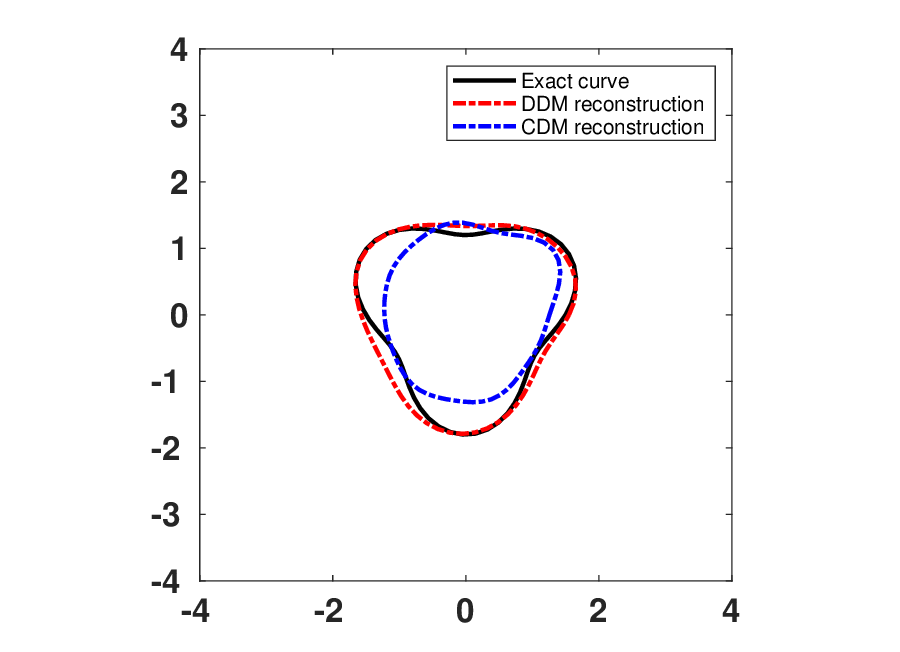}
          \put (40,84){\scriptsize Case 2}
        \end{overpic}
        \hspace{-0.8cm} 
        \begin{overpic}[width=0.32\textwidth,trim=40 0 26 15, clip=true,tics=10]{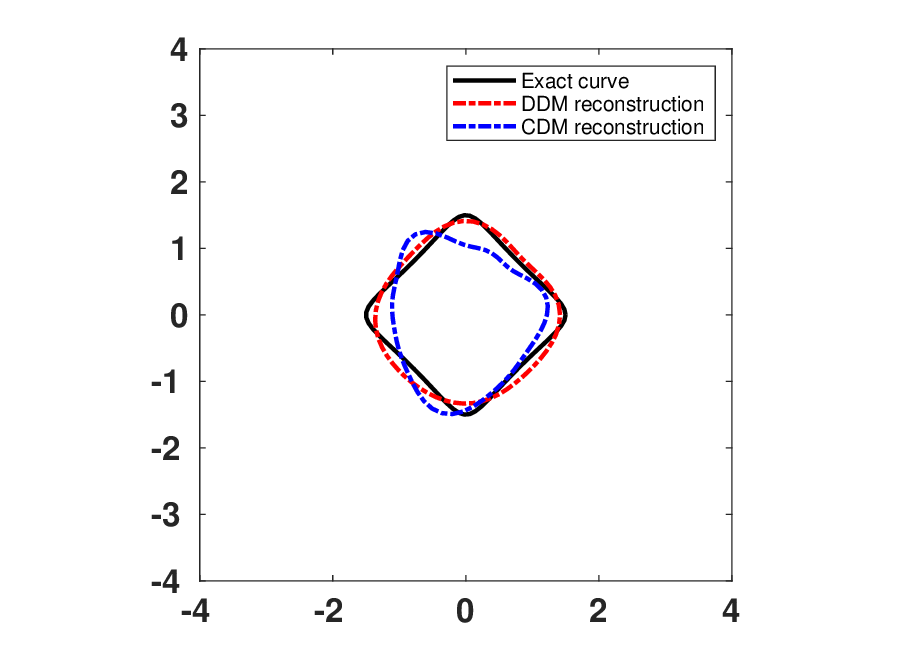}
           \put (40,84){\scriptsize Case 3}
        \end{overpic}
    \end{center}
        \begin{center}
        \begin{overpic}[width=0.32\textwidth,trim=40 0 26 15, clip=true,tics=10]{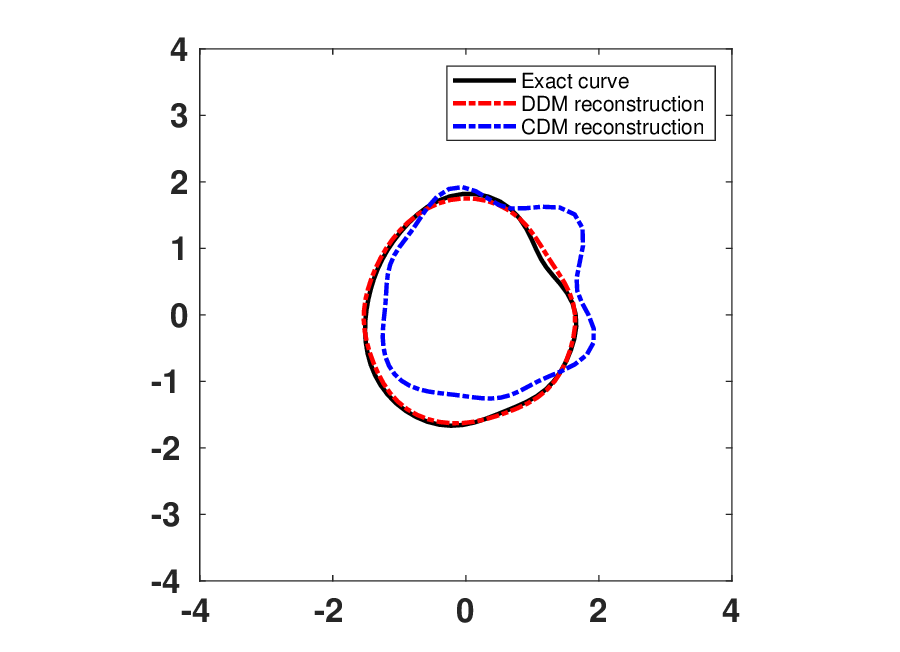}
        \end{overpic}
        \hspace{-0.8cm} 
        \begin{overpic}[width=0.32\textwidth,trim=40 0 26 15, clip=true,tics=10]{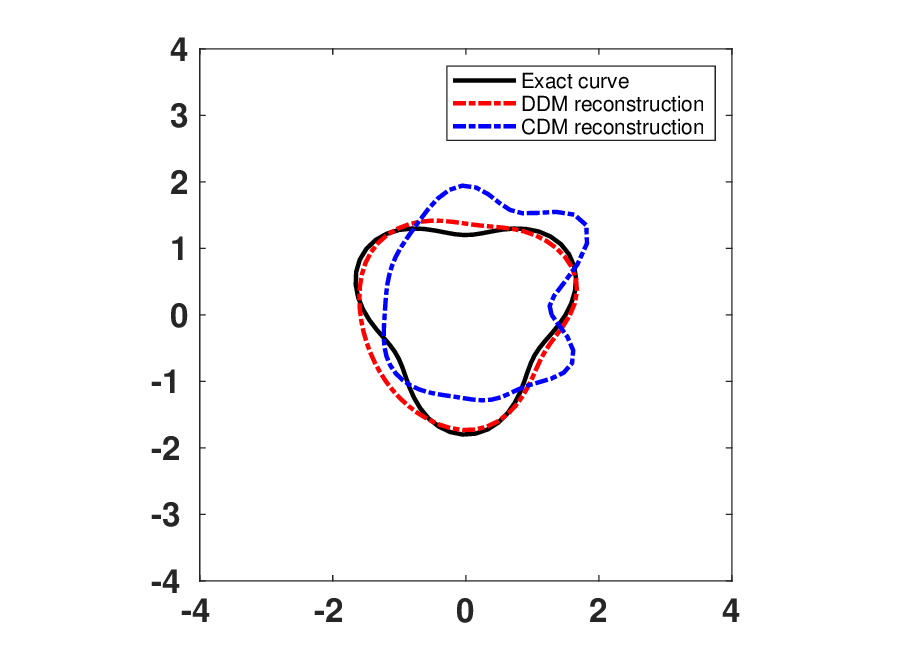}
        \end{overpic}
        \hspace{-0.8cm} 
        \begin{overpic}[width=0.32\textwidth,trim=40 0 26 15, clip=true,tics=10]{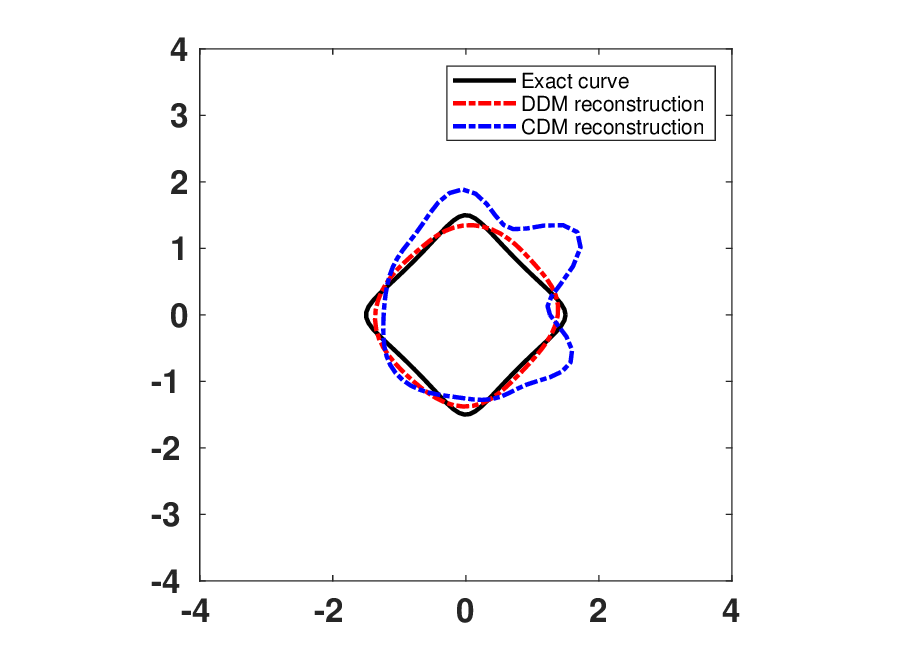}
        \end{overpic}
    \end{center}
    \caption{\textcolor{black}{Example 3: Reconstructions made by DDM and CDM using various incident apertures: $\psi=[0,\pi]$(top) and $\psi=[0,\pi/2]$ (bottom).}}
    \label{fig:ex3_ddm}
\end{figure}
\begin{table}[htbp]
\caption{\textcolor{black}{Example 3: The relative errors $\bar{e}$ of DDM and CDM  using various incident apertures.}}
\centering
\begin{tabular}{llcc}
\toprule
&\multicolumn{1}{l}{}&\multicolumn{1}{c}{$\bar{e}$}\\
\cmidrule(lr){2-4}
 & Case 1 & Case 2 & Case 3\\
\midrule
DDM $\psi=[0,\pi]$ & 0.0255 & 0.0672 & 0.0706\\
DDM $\psi=[0,\pi/2]$  & 0.0265 & 0.0893 & 0.0697\\
CDM $\psi=[0,\pi]$ & 0.2178 & 0.1720 & 0.1712\\
CDM $\psi=[0,\pi/2]$  & 0.1898 & 0.2454 & 0.2325\\
\bottomrule
\end{tabular}
\label{ex3_accuracy_cost}
\end{table}

\textcolor{black}{The corresponding results are presented in Figs. \ref{fig:ex3_loss_convergence}–\ref{fig:ex3_im}. The right panel of Fig. \ref{fig:ex3_loss_convergence} clearly shows that DDM can gradually approximate the exact boundary $\partial D$ even with limited incident and observation apertures. Figs. \ref{fig:ex3_re} and \ref{fig:ex3_im} illustrate that for incident apertures of $\psi = [0, \pi]$ and $\psi = [0, \pi/2]$, the recovered data has some accuracy loss  when compared to Example 2. This is expected, as learning the analytic continuation becomes more challenging in these cases. However, it is noting that DDM is still able to capture the reciprocity property. Specifically, with an incident aperture of $\psi = [0, \pi]$, the far-field data is well recovered. The boundary reconstruction results are shown in Fig. \ref{fig:ex3_ddm} and Tab. \ref{ex3_accuracy_cost}, where it can be observed that DDM achieves better reconstruction accuracy than CDM. This result emphasizes the importance of data completion and the integration of scattering information. }

\subsubsection{Example 4: Full aperture case}

\begin{figure}[ht]
    \begin{center}
        \begin{overpic}[width=0.44\textwidth]{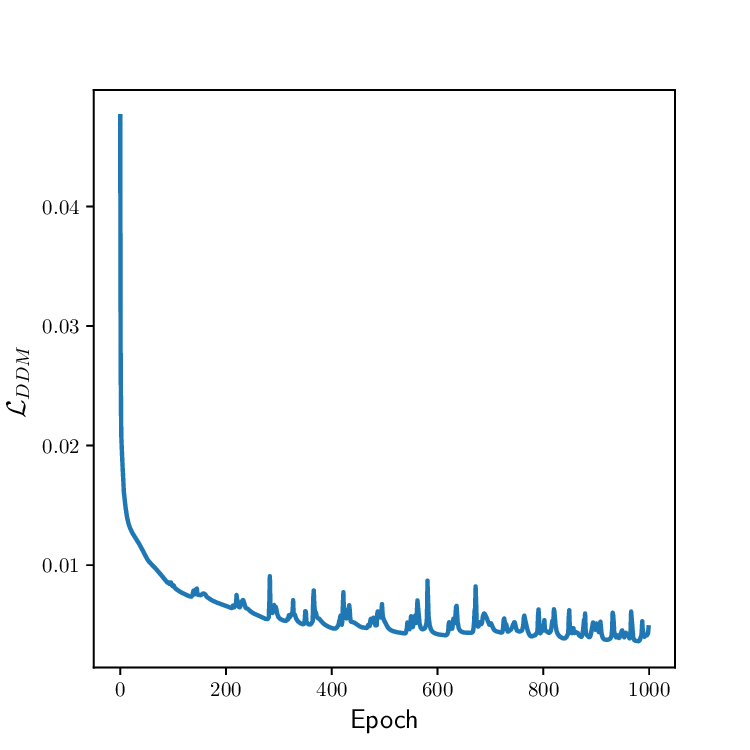}
           \put (40,91) {\scriptsize {$\mathcal{L}_{DDM}$}}
        \end{overpic}
        \hspace{-0.4cm} 
        \begin{overpic}[width=0.44\textwidth]{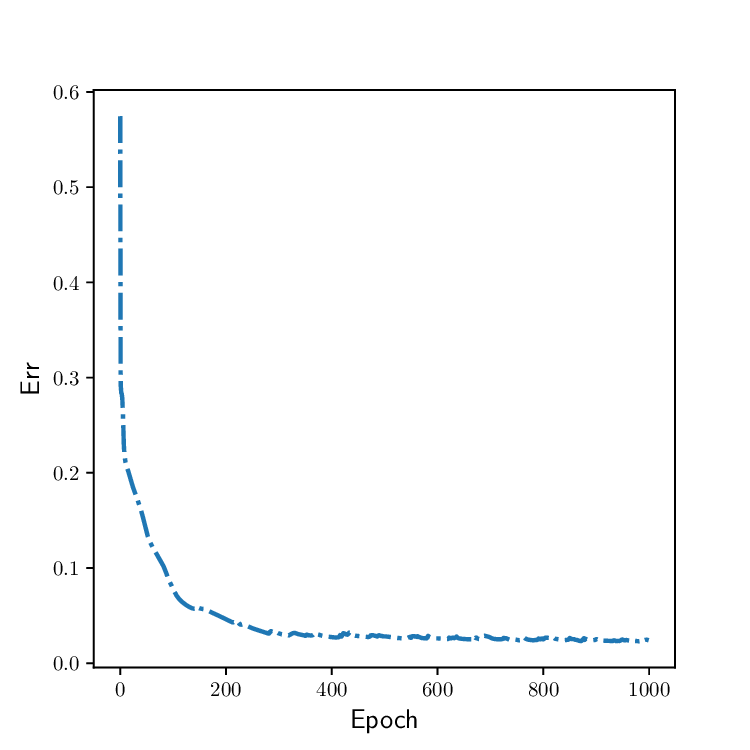}
           \put (45,91) {\scriptsize {Err}}
        \end{overpic}
    \end{center}
    \caption{\textcolor{black}{Example 4: The evolution of the DDM loss function $\mathcal{L}_{DDM}$ and the training relative error $\mathrm{Err}$, throughout the training process.}}
    \label{fig:ex4_loss_convergence}
\end{figure}
\begin{figure}[ht]
    \begin{center}
        \begin{overpic}[width=0.32\textwidth,trim=40 0 26 15, clip=true,tics=10]{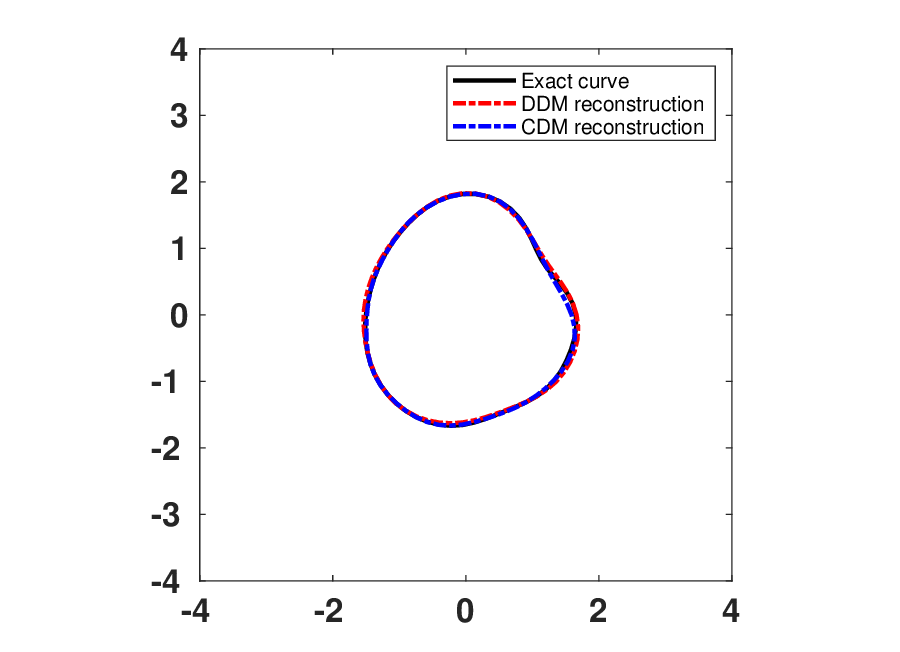}
           \put (40,82) {\scriptsize Case 1}
        \end{overpic}
        \hspace{-0.18cm} 
        \begin{overpic}[width=0.32\textwidth,trim=40 0 26 15, clip=true,tics=10]{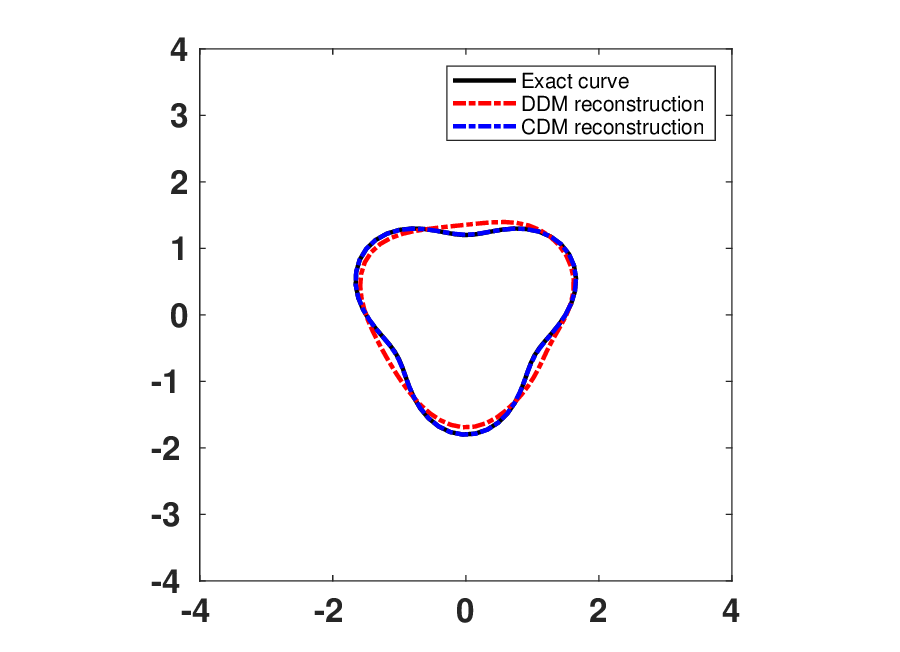}
           \put (40,82){\scriptsize Case 2}
        \end{overpic}
        \hspace{-0.18cm} 
        \begin{overpic}[width=0.32\textwidth,trim=40 0 26 15, clip=true,tics=10]{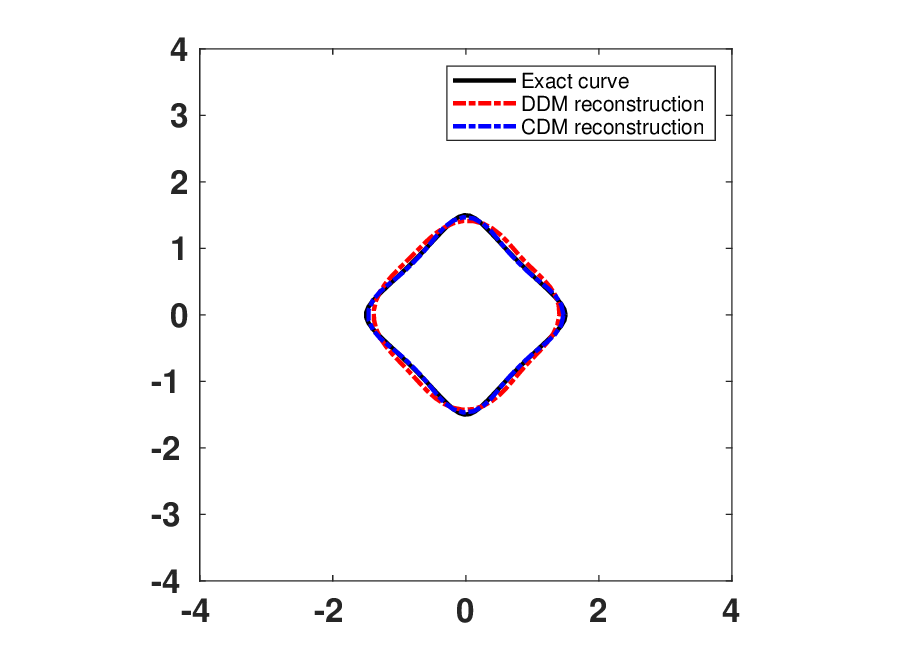}
           \put (40,82){\scriptsize Case 3}
        \end{overpic}
    \end{center}
    \caption{\textcolor{black}{Example 4: Reconstructions made by DDM and CDM using full aperture data.}}
    \label{fig:ex4_ddm}
\end{figure}
\begin{table}[ht]
\caption{\textcolor{black}{Example 4: The relative errors $\bar{e}$ of DDM and CDM  using full aperture data.}}
\centering
\begin{tabular}{cccc}
\toprule
&\multicolumn{1}{c}{}&\multicolumn{1}{c}{$\bar{e}$}\\
\cmidrule(lr){2-4}
 & Case 1 & Case 2 & Case 3\\
\midrule
DDM  & 0.0153 & 0.0595 & 0.0487\\
CDM  & 0.0131 & 0.0018 & 0.0121\\
\bottomrule
\end{tabular}
\label{ex4_accuracy_cost}
\end{table}

\textcolor{black}{In this example, we compare the performance of DDM and CDM using full aperture data and a noise level of $\sigma = 0.01$, specifically with $\psi=[0, 2\pi]$ and $\phi=[0, 2\pi]$. In this case, the DCnet is unnecessary, so $\mathcal{L}_{DDM} = \mathcal{L}_{phy}$. }

\textcolor{black}{The numerical results are shown in Figs. \ref{fig:ex4_loss_convergence}-\ref{fig:ex4_ddm} and Tab.\ref{ex4_accuracy_cost}.  The outcomes show that both methods produce acceptable results in the full aperture scenario. Furthermore, the $L^{2}$ relative error $\bar{e}$ for DDM in Tab.\ref{ex4_accuracy_cost} is lower than in the limited aperture cases, fully supporting the statement in Remark \ref{rem_ddm_convergcen_conditions}. Notably, CDM reconstructions outperform DDM in the full aperture setting, especially in Cases 2 and 3. This result is expected, as Cases 2 and 3 represent out-of-distribution examples, which may introduce generalization errors in DDM. A key advantage of DDM is that it requires no additional training to perform reconstructions, as it directly applies the pre-trained network. By learning the inverse operator, DDM can invert new data without retraining. Conversely, CDM requires optimization for each individual measurement, which contributes to its higher accuracy in the full aperture scenario but limits its ability to perform real-time reconstructions. Improving DDM’s reconstruction accuracy in such cases through fine-tuning of the network is a promising research direction, though beyond the scope of this study. }

\textcolor{black}{In conclusion,  for the limited aperture problem considered in this work, the data retrieval approach and incorporation of underlying scattering principles in DDM produce satisfactory results. Additionally, DDM focuses on learning the inverse operator through deep learning, which allows for faster computation once training is completed.}

\section{Summary}
In this paper, we propose the deep decomposition method(DDM), a scattering-based neural network for determining the shape of a sound-soft obstacle from limited aperture data at a fixed wavenumber or frequency. In DDM, a data completion scheme based on deep learning is implemented to prevent any distortion of the reconstructions. On the other hand, we incorporate the scattering information, such as the far-field operator, the Herglotz operator, and the fundamental solution, into the loss function of DDM for the boundary recovery in order to better utilize the underlying physics information for learning the regularized inverse operator. Because physical information is present, DDM does not require exact shape data during the training stage. Moreover, DDM is a physics-aware machine learning method that can handle ill-posedness in addition to having interpretability. The convergence of DDM is demonstrated, and numerical examples are provided to illustrate the validity of DDM.

\textcolor{black}{Our approach can be directly applied to other boundary conditions, inhomogeneous mediums, and even three-dimensional cases.  The use of multifrequency data to construct the loss function of a scattering-based neural network is an exciting direction for future research, as it will broaden the detectable range. Furthermore, because the data retrieval component of DDM is entirely data-driven, a physics-informed data completion approach represents a promising avenue for future development. These extensions will be investigated in future studies.}


\bibliographystyle{siamplain}

\end{document}